\numberwithin{equation}{section}
\newtheorem{myDefn}{Definition}[section]
\newtheorem{myProp}[myDefn]{Proposition}
\newtheorem{myRem}[myDefn]{Remark}
\newtheorem{myLem}[myDefn]{Lemma}
\newtheorem{myCor}[myDefn]{Corollary}
\newtheorem{myTheorem}[myDefn]{Theorem}
\DeclareMathOperator*{\argmin}{argmin}
\DeclareMathOperator*{\minn}{minimize}
\def\nn{\mathrm{n}}
\def\R{\mathbb{R}}
\def\HH{\mathrm{H}}
\def\LL{\mathrm{L}}
\newcommand{\fonction}[5]{\begin{array}[t]{lrcl}#1 :&#2 &\longrightarrow &#3\\&#4& \longmapsto &#5 \end{array}}
\newcommand{\dual}[2]{\left\langle #1 , #2 \right\rangle}
\newlist{primenumerate}{enumerate}{1}
\setlist[primenumerate,1]{label={\roman*$'$}}
\title{Shape optimization involving the Tresca friction law in a 2D linear elastic model}
\author{Lo\"ic Bourdin\footnote{Institut de recherche XLIM. UMR CNRS 7252. Universit\'e de Limoges, France. \texttt{loic.bourdin@unilim.fr}}, 
Fabien Caubet\footnote{Universit\'e de Pau et des Pays de l'Adour, E2S UPPA, CNRS, LMAP, UMR 5142, 64000 Pau, France. \texttt{fabien.caubet@univ-pau.fr}}, Aymeric Jacob de Cordemoy\footnote{Sorbonne Université, Université Paris Cité, CNRS, INRIA, Laboratoire Jacques-Louis Lions, LJLL, F-75005 Paris, France.~\texttt{aymeric.jacob\_de\_cordemoy@sorbonne-universite.fr}
}}
\begin{document}

\maketitle

\begin{abstract}
The aim of this work is to analyse a shape optimization problem in a mechanical friction context. Precisely we perform a shape sensitivity analysis of a Tresca friction problem, that is, a boundary value problem involving the usual linear elasticity equations together with the (nonsmooth) Tresca friction law on a part of the boundary. We prove that the solution to the Tresca friction problem admits a directional shape derivative which moreover coincides with the solution to a boundary value problem involving tangential Signorini's unilateral conditions. Then an explicit expression of the shape gradient of the Tresca energy functional is provided (which allows us to provide numerical simulations illustrating our theoretical results). Our methodology is not based on any regularization procedure, but rather on the twice epi-differentiability of the (nonsmooth) Tresca friction functional which is analyzed thanks to a change of variables which is well-suited in the two-dimensional case. The obstruction in the higher-dimensional case is discussed.
\end{abstract}

\noindent \textbf{Keywords:} Shape optimization, shape sensitivity analysis, contact mechanics, Tresca's friction law, Signorini's unilateral conditions, variational inequalities, twice epi-differentiability.

\medskip

\noindent \textbf{AMS Classification:}
49Q10, 49Q12, 49J40, 74M10, 74M15, 74P10.

\section{Introduction}

Shape optimization problems involving (nonsmooth) mathematical models from contact mechanics (including for instance Signorini's unilateral conditions, Tresca's friction law, etc.) have already been investigated in the literature (see, e.g.,~\cite{BEREM,FULM,HASKLAR,HASLINGER,HEINEMANN,HINTERMULLERLAURAIN} and references therein). They can be treated by using, for example, regularization procedures (see~\cite{MAUALLJOU,CHAUDET,CHAUDET2}) or dualization procedures (see~\cite[Chapter~4]{SOKOZOL} and~\cite{SOKOZOLE2}).
In order to avoid distorting the physical meaning of the contact models or obtaining abstract results involving dual elements, we have introduced in a recent series of papers~\cite{4ABC,ABCJ,BCJDC,BCJDC2,jdc} a new methodology based on the notion of \textit{twice epi-differentiability} from the nonsmooth analysis literature (see, e.g.,~\cite{Rockafellar,ROCK}). In particular, this methodology has been successfully applied in~\cite{ABCJ} in order to analyze a shape optimization problem involving the Tresca friction law. Precisely, thanks to the twice epi-differentiability of the (nonsmooth) Tresca friction functional, we proved that the solution to the corresponding Tresca friction problem admits a directional shape derivative, which moreover coincides with the solution to a boundary value problem involving Signorini's unilateral conditions, and we provided an explicit expression of the shape gradient of the associated Tresca energy functional (which allowed us to provide numerical simulations illustrating our theoretical results).

However, as indicated in its title, the above paper~\cite{ABCJ} deals only with the \textit{scalar case} (which has no physical sense from the point of view of contact mechanics). Therefore the objective of the present paper is to discuss the applicability of our methodology to the \textit{elastic case} (which is the natural framework in contact mechanics). Here we would like to insist on the fact that this extension to the elastic case is not a simple replica of our previous paper~\cite{ABCJ}. Indeed, in addition to the obvious and significant difficulties inherent in calculations, our methodology leads in the elastic case to a major technical obstruction (that does not appear in the scalar case). The main contribution of the present paper is to show that a well-suited change of variables allows to overcome this obstruction in the two-dimensional elastic case. However, as discussed later in this introduction, the higher-dimensional elastic case remains an open challenge.

This long introduction is divided into several paragraphs in order to highlight (as concisely as possible) the major technical obstruction that appears in the application of our methodology in the general elastic case and how it can be overcome in the two-dimensional case.

\paragraph{Description of the shape optimization problem.}
In the sequel we will use standard notations, terminologies and assumptions that are precised in Section~\ref{secprelim}. Let~$d \geq 2$, $f\in\HH^{1}(\R^{d},\R^{d})$,~$g\in\HH^{2}(\R^{d},\R)$ such that $g>0$ \textit{a.e.}\ on~$\R^{d}$, and~$\Omega_{\mathrm{ref}}$ be a nonempty connected bounded open subset of~$\R^{d}$ with a~$\mathcal{C}^{1}$-boundary $\Gamma_{\mathrm{ref}}:= \mathrm{bd}({\Omega_{\mathrm{ref}}})$ (see Remark~\ref{regularityofn} for comments on this $\mathcal{C}^{1}$-regularity assumption) such that $\Gamma_{\mathrm{ref}}=\Gamma_{\mathrm{D}}\cup{\Gamma_{\mathrm{T}_{\mathrm{ref}}}}$, where $\Gamma_{\mathrm{D}}$ and ${\Gamma_{\mathrm{T}_{\mathrm{ref}}}}$ are two measurable (with positive measure) disjoint subsets of $\Gamma_{\mathrm{ref}}$. In this paper we consider the shape optimization problem with volume constraint given by
\begin{equation}\label{shapeOptim}
    \minn\limits_{ \substack{ \Omega\in \mathcal{U}_{\mathrm{ref}} \\ \vert \Omega \vert = \vert \Omega_{\mathrm{ref}} \vert } } \; \mathcal{J}(\Omega),
\end{equation}
where the set of admissible shapes is defined by
\begin{multline*}
     \mathcal{U}_{\mathrm{ref}} :=\biggl\{ \Omega\subset\R^{d} \mid \Omega \text{ nonempty connected bounded open subset of } \R^{d} \\[-12pt]  \text{ with a $\mathcal{C}^{1}$-boundary } \Gamma:= \mathrm{bd}({\Omega}) \text{ such that } \Gamma_{\mathrm{D}}\subset\Gamma  \biggl\},
\end{multline*}
where $\mathcal{J} : \mathcal{U}_{\mathrm{ref}} \to \R$ is the \textit{Tresca energy functional} defined by
\begin{equation*}\label{energyTresca}
    \mathcal{J}(\Omega) := \frac{1}{2}\int_{\Omega} \mathrm{A}\mathrm{e}\left(u_\Omega\right):\mathrm{e}\left(u_\Omega\right)+\int_{\Gamma_{\mathrm{T}}}g\left\|{u_\Omega}_\tau\right\|-\int_{\Omega}f\cdot u_{\Omega},
\end{equation*}
for all~$\Omega \in \mathcal{U}_{\mathrm{ref}}$, where~$u_\Omega \in\HH^{1}_{\mathrm{D}}(\Omega,\R^{d})$ stands for the unique 
%\textcolor{blue}{(weak? strong? attention car il faut une hypothèse sur les points intérieurs, voir Remark~2.15 dans [11])}\textcolor{red}{(Aymeric : weak suffit pour le moment, c'est uniquement pour le corollaire du gradient de forme qu'on a besoin que la solution $u_0$ soit forte)}
weak solution to the \textit{Tresca friction problem} (see, e.g.,~\cite[Chapter~3 Section~5.2]{DUVAUTLIONS} or~\cite[Section~2.1.3]{BCJDC2}) given by
\begin{equation}\label{Trescaproblem2222}\tag{TP\ensuremath{{}_\Omega}}%\tag{TP${}_\Omega$}
\arraycolsep=2pt
\left\{
\begin{array}{rcll}
-\mathrm{div}(\mathrm{A}\mathrm{e}(u))-f & = & 0   & \text{ in } \Omega , \\
u & = & 0  & \text{ on } \Gamma_{\mathrm{D}} ,\\
\sigma_\nn(u) & = & 0  & \text{ on } \Gamma_{\mathrm{T}},\\
\left\|\sigma_\tau(u)\right\|\leq g \text{ and } u_{\tau}\cdot\sigma_{\tau}(u)+g\left\|u_{\tau}\right\| & = & 0  & \text{ on } \Gamma_{\mathrm{T}},
\end{array}
\right.
\end{equation}
where $\Gamma:=\mathrm{bd}(\Omega)$,  $\Gamma_{\mathrm{T}}:=\Gamma\textbackslash\Gamma_{\mathrm{D}}$ and
$$
\HH^{1}_{\mathrm{D}}(\Omega,\R^{d}) := \left\{v\in\HH^{1}(\Omega,\R^{d}) \mid v=0 \text{ \textit{a.e.}\ on }\Gamma_{\mathrm{D}} \right \}.
$$
%is a Hilbert space endowed with the standard scalar product
%\begin{equation}\label{eqintro2}
%    (v_1,v_2)\in\left(\HH^1_{\mathrm{D}}(\Omega,\R^d)\right)^2\longmapsto \int_{\Omega} \mathrm{A} \mathrm{e}(v_1) : %\mathrm{e}(v_2) \in\R.
%\end{equation}
%In the Tresca friction problem~\eqref{Trescaproblem2222}, recall that~$\mathrm{A}$ stands for the stiffness tensor (assumed to be linear, constant and symmetric),~$\mathrm{e}$ is the infinitesimal strain tensor,~$\sigma_{\nn}$ is the normal stress and~$\sigma_{\tau}$ is the shear stress (see Section~\ref{BVP} for details). 
The tangential boundary conditions on $\Gamma_{\mathrm{T}}$ in~\eqref{Trescaproblem2222} are known as the \textit{Tresca friction law}. Finally recall that the unique weak solution~$u_\Omega \in\HH^{1}_{\mathrm{D}}(\Omega,\R^{d})$ {to}~\eqref{Trescaproblem2222} is characterized by the variational inequality
\begin{equation*}
\displaystyle\int_{\Omega}\mathrm{A}\mathrm{e}(u_\Omega):\mathrm{e}(v-u_\Omega)+\int_{\Gamma_{\mathrm{T}}}g\left\|v_\tau\right\|-\int_{\Gamma_{\mathrm{T}}}g\left\|{u_\Omega}_\tau\right\| \geq\int_{\Omega}f\cdot\left(v-u_\Omega\right), \qquad \forall v\in\HH^{1}_{\mathrm{D}}(\Omega,\R^d),
\end{equation*}
and can be expressed as~$u_\Omega=\mathrm{prox}_{\phi_{\Omega}}(F_\Omega)$, where~$F_\Omega\in\HH^{1}_{\mathrm{D}}(\Omega,\R^{d})$
is the unique 
%\textcolor{blue}{(weak? strong?)}\textcolor{red}{(Aymeric : pour le problème de Dirichlet-Neumann c'est équivalent. Je mets weak)} 
solution to the Dirichlet-Neumann problem (see, e.g.,~\cite[Section~2.1.1]{BCJDC2}) given by
\begin{equation}\label{besoinfortheend}
\left\{
\begin{array}{rcll}
-\mathrm{div}(\mathrm{A}\mathrm{e}(F))-f & = & 0   & \text{ in } \Omega , \\
F & = & 0  & \text{ on } \Gamma_{\mathrm{D}} ,\\
\mathrm{A}\mathrm{e}(F)\nn & = & 0  & \text{ on } \Gamma_{\mathrm{T}},
\end{array}
\right.
\end{equation} 
and $\mathrm{prox}_{\phi_{\Omega}} : \HH^1_{\mathrm{D}}(\Omega,\R^d) \to \HH^1_{\mathrm{D}}(\Omega,\R^d)$ stands for the \textit{proximal operator} (see Definition~\ref{proxi}) associated with the (convex) \textit{Tresca friction functional} $\phi_{\Omega} : \HH^1_{\mathrm{D}}(\Omega,\R^d) \to \R$ defined by
$$
\fonction{\phi_\Omega}{\HH^{1}_{\mathrm{D}}(\Omega,\R^d)}{\R}{v}{\displaystyle \int_{\Gamma_{\mathrm{T}}}g\left\|v_\tau\right\|.}
$$

\paragraph{Application of the classical strategy from (smooth) shape optimization literature.}
To deal with the numerical treatment of the above shape optimization problem, a suitable expression of the shape gradient of~$\mathcal{J}$ is required. For this purpose, we follow the classical strategy developed in (smooth) shape optimization literature (see, e.g.,~\cite{ALL,HENROT}). Consider~$\Omega_{0} \in \mathcal{U}_{\mathrm{ref}}$ and a direction~$\theta\in \mathcal{C}_{\mathrm{D}}^{2,\infty}(\R^{d},\R^{d})$ where
\begin{equation*}\label{direc12}
\mathcal{C}_{\mathrm{D}}^{2,\infty}(\R^{d},\R^{d}):=\left\{\theta\in\mathcal{C}^{2}(\R^{d},\R^{d})\cap\mathrm{W}^{2,\infty}(\R^{d},\R^{d}) \mid \theta=0 \text{ on } \Gamma_{\mathrm{D}}\right\}.
\end{equation*}
For any $t\geq0$ sufficiently small such that~$\mathrm{id}+t\theta$ is a~$\mathcal{C}^{2}$-diffeomorphism of $\R^{d}$, where~$\mathrm{id} :  \R^{d}\rightarrow \R^{d}$ stands for the identity map, we denote by~$\Omega_{t}:=(\mathrm{id}+t\theta)(\Omega_{0}) \in \mathcal{U}_{\mathrm{ref}}$ and by~$u_{t} := u_{\Omega_t} \in\HH^{1}_{\mathrm{D}}(\Omega_{t},\R^d)$ (note that~$u_{t}$ is defined on the moving domain~$\Omega_t$). To get an expression of the \textit{shape gradient} of~$\mathcal{J}$ at~$\Omega_0$ in the direction~$\theta$, defined by~$ \mathcal{J}'(\Omega_0)(\theta) := \lim_{t \to 0^+} \frac{ \mathcal{J} (\Omega_t) - \mathcal{J}(\Omega_0) }{t}$ (if it exists), the usual first step consists in introducing~$\overline{u}_{t}:=u_{t}\circ(\mathrm{id}+t\theta)\in\HH^{1}_{\mathrm{D}}(\Omega_{0},\R^d)$ (note that~$\overline{u}_{t}$ is defined on the fixed domain~$\Omega_0$) and obtaining an expression of the derivative (if it exists) of the map~$t \in \R_+ \mapsto \overline{u}_{t} \in\HH^{1}_{\mathrm{D}}(\Omega_0,\R^d)$ at~$t=0$, denoted by~$\overline{u}'_0 \in\HH^{1}_{\mathrm{D}}(\Omega_0,\R^d)$ and called  \textit{directional material derivative}. Then the \textit{directional shape derivative} is defined by~$u'_0 := \overline{u}'_0 - \nabla u_0 \theta $ which corresponds (roughly speaking) to the derivative of the map~$t \in \R_+ \mapsto u_{t} \in\HH^{1}_{\mathrm{D}}(\Omega_{t},\R^d)$ at~$t=0$. We refer to Remark~\ref{remdirectional} for a short discussion on the terminology \textit{directional} that has been added with respect to the classical literature on shape optimization.

To get an expression of the directional material derivative, we use the change of variables~$\mathrm{id}+t\theta$ and the equality
$$
\nn_t\circ(\mathrm{id}+t\theta)=\frac{(\mathrm{I}+t\nabla{\theta}^{\top})^{-1}\nn_0}{\left\|(\mathrm{I}+t\nabla{\theta}^{\top})^{-1}\nn_0\right\|},
$$ 
(see, e.g.,~\cite[Chapter 2 Proposition 2.48]{SOKOZOL}) in order to prove that~$\overline{u}_{t}\in\HH^{1}_{\mathrm{D}}(\Omega_{0},\R^d)$ is the unique solution to the parameterized variational inequality
\begin{multline}\label{inequalitytogetderivmat}
\displaystyle\int_{\Omega_0}\mathrm{J}_{t}\mathrm{A}\left[\nabla{\overline{u}_t}\left(\mathrm{I}+t\nabla{\theta}\right)^{-1}\right]:\nabla{\left(v-\overline{u}_t\right)}\left(\mathrm{I}+t\nabla{\theta}\right)^{-1}\\+\int_{\Gamma_{\mathrm{T}_0}}g_{t}\mathrm {J}_{\mathrm{T}_{t}}\left\|v-\left(v\cdot\frac{(\mathrm{I}+t\nabla{\theta}^{\top})^{-1}\nn_0}{\left\|(\mathrm{I}+t\nabla{\theta}^{\top})^{-1}\nn_0\right\|^2}\right)(\mathrm{I}+t\nabla{\theta}^{\top})^{-1}\nn_0\right\|\\-\int_{\Gamma_{\mathrm{T}_0}}g_{t}\mathrm {J}_{\mathrm{T}_{t}}\left\|\overline{u}_t-\left(\overline{u}_t\cdot\frac{(\mathrm{I}+t\nabla{\theta}^{\top})^{-1}\nn_0}{\left\|(\mathrm{I}+t\nabla{\theta}^{\top})^{-1}\nn_0\right\|^2}\right)(\mathrm{I}+t\nabla{\theta}^{\top})^{-1}\nn_0\right\|\\ \geq\int_{\Omega_0}f_{t}\mathrm{J}_{t}\cdot\left(v-\overline{u}_t\right), \qquad \forall v\in\HH^1_{\mathrm{D}}(\Omega_0,\R^d),
\end{multline}
where $f_{t}:=f\circ(\mathrm{id}+t\theta)\in\HH^{1}(\R^{d},\R^d)$, $g_t:=g\circ(\mathrm{id}+t\theta)\in\HH^{2}(\R^{d},\R)$,~$\mathrm{J}_{t}:=\mathrm{det}(\mathrm{I}+t\nabla{\theta})\in\LL^{\infty}(\R^{d},\R)$ is the Jacobian determinant,~$\mathrm {J}_{\mathrm{T}_{t}}:=\mathrm{det}(\mathrm{I}+t\nabla{\theta}) \|(\mathrm{I}+t\nabla{\theta}^{\top})^{-1}\nn_0 \| \in\mathcal{C}^{0}(\Gamma_{0},\R)$ is the tangential Jacobian and~$\mathrm{I}$ is the identity matrix of~$\R^{d \times d}$. Thus we get that
\begin{equation}\label{eqintro1}
    \displaystyle \overline{u}_t=\mathrm{prox}_{\overline{\phi}_{t}}(\overline{F}_{t}),
\end{equation}
where $\overline{F}_{t}\in\HH^1_{\mathrm{D}}(\Omega_0,\R^d)$ is the unique solution to the parameterized variational equality
\begin{equation*}
\displaystyle\int_{\Omega_0}\mathrm{J}_{t}\mathrm{A}\left[\nabla{\overline{F}_t}\left(\mathrm{I}+t\nabla{\theta}\right)^{-1}\right]:\nabla{v}\left(\mathrm{I}+t\nabla{\theta}\right)^{-1}=\int_{\Omega_0}f_{t}\mathrm{J}_{t}\cdot v, \qquad \forall v\in\HH^1_{\mathrm{D}}(\Omega_0,\R^d),
\end{equation*}
%\textcolor{red}{Aymeric : Ici $\overline{F}_t$ est la solution faible du problème de Dirichlet- Neumann
%$$
%\left\{
%\begin{array}{rcll}
%-\mathrm{div}(\mathrm{J}_{t}\mathrm{A}\left[\nabla{\overline{F}_t}\left(\mathrm{I}+t\nabla{\theta}\right)^{-1}\right]\left(\mathrm{I}+t\nabla{\theta}^{\top}\right)^{-1}) & =f_t J_t & 0   & \text{ in } \Omega , \\
%\overline{F}_t & = & 0  & \text{ on } \Gamma_{\mathrm{D}} ,\\
%\mathrm{J}_{t}\mathrm{A}\left[\nabla{\overline{F}_t}\left(\mathrm{I}+t\nabla{\theta}\right)^{-1}\right]\left(\mathrm{I}+t\nabla{\theta}^{\top}\right)^{-1} \nn & = & 0  & \text{ on } \Gamma_{\mathrm{T}}.
%\end{array}
%\right.
%$$
%Sous quelle forme préferez-vous écrire le problème satisfait par $\overline{F}_t$ ?
%}
and $\mathrm{prox}_{\overline{\phi}_{t}} : \HH^1_{\mathrm{D}}(\Omega_0,\R^d) \to \HH^1_{\mathrm{D}}(\Omega_0,\R^d)$ is the proximal operator associated with the parameterized convex functional~$\overline{\phi}_{t} : \HH^1_{\mathrm{D}}(\Omega_0,\R^d) \to \R$ defined by
\begin{equation}\label{eqintro3}
\displaystyle\fonction{\overline{\phi}_{t}}{\HH^1_{\mathrm{D}}(\Omega_0,\R^d)}{\R}{v}{\displaystyle \int_{\Gamma_{\mathrm{T}_0}}g_{t}\mathrm {J}_{\mathrm{T}_{t}}\left\|v-\left(v\cdot\frac{(\mathrm{I}+t\nabla{\theta}^{\top})^{-1}\nn_0}{\left\|(\mathrm{I}+t\nabla{\theta}^{\top})^{-1}\nn_0\right\|^2}\right)(\mathrm{I}+t\nabla{\theta}^{\top})^{-1}\nn_0 \right\|.}
\end{equation}

\paragraph{Application of our methodology and facing a major obstruction.}
Now the difficulty (that does not appear in standard smooth shape optimization problems) is that, from~\eqref{eqintro1}, the differentiability of the map~$t \in \R_+ \mapsto \overline{u}_{t} \in\HH^{1}_{\mathrm{D}}(\Omega_{0},\R^d)$ at~$t=0$ is related to the differentiability (in a generalized sense) of the parameterized proximal operator~$\mathrm{prox}_{\overline{\phi}_{t}}$. For this purpose, the methodology that we have developed in~\cite{4ABC,ABCJ,BCJDC,BCJDC2,jdc} invokes the notion of \textit{twice epi-differentiability} for convex functions (introduced by Rockafellar in~\cite{Rockafellar}) which ensures the \textit{proto-differentiability} of the corresponding proximal operators. Actually, since the work by Rockafellar deals only with nonparameterized convex functions, we used instead the recent work~\cite{8AB} in which the notion of twice epi-differentiability has been extended to parameterized convex functions. The content of Proposition~\ref{TheoABC2018} in Appendix~\ref{appendix} (extracted from~\cite[Theorem 4.15]{8AB}) invites us to analyze the twice epi-differentiability of the parameterized convex function~$\overline{\phi}_t$ in order to obtain from~\eqref{eqintro1} the differentiability of the map~$t \in \R_+ \mapsto \overline{u}_{t} \in\HH^{1}_{\mathrm{D}}(\Omega_{0},\R^d)$ at~$t=0$ and a characterization of the directional material derivative~$\overline{u}'_0$.

%However, the results from~\cite{8AB} do not apply when the parameterized proximal operator is defined on a Hilbert space endowed with a scalar product that is parameterized (as it is observed at the end of the previous paragraph). This is a second difficulty which is also observed in the scalar case~\cite{ABCJ}. It can be overcome by using a simple change of variables that allows to express~$\overline{u}_t=\mathrm{prox}_{\overline{\phi}_{t}}(\overline{E}_{t})$ where~$\overline{E}_{t}\in\HH^1_{\mathrm{D}}(\Omega_0,\R^d)$ is the unique solution to another parameterized variational equality and where, above all, $\mathrm{prox}_{\overline{\phi}_{t}} : \HH^1_{\mathrm{D}}(\Omega_0,\R^d) \to \HH^1_{\mathrm{D}}(\Omega_0,\R^d)$ is considered on the Hilbert space $\HH^1_{\mathrm{D}}(\Omega_0,\R^d)$ which is, this time, endowed with the $t$-independent scalar product~\eqref{eqintro2}.

However, at this step, a major technical obstruction appears in the elastic case (that does not appear in the scalar case in our previous paper~\cite{ABCJ}). Indeed the twice epi-differentiability of the parameterized convex functional~$\overline{\phi}_t$ is naturally related to the twice epi-differentiability of the parameterized convex integrand that appears in~\eqref{eqintro3}. As a reminder, in the scalar case~\cite{ABCJ}, the parameterized convex functional~$\overline{\phi}_t$ is given by the (simpler) expression
\begin{equation*}
\displaystyle\fonction{\overline{\phi}_{t}}{\HH^1_{\mathrm{D}}(\Omega_0,\R)}{\R}{v}{\displaystyle \int_{\Gamma_{\mathrm{T}_0}}g_{t}\mathrm {J}_{\mathrm{T}_{t}} \vert v \vert,} \qquad \text{[scalar case]}
\end{equation*}
and the twice epi-differentiability of the (simple) parameterized convex integrand can be analyzed. On the contrary, in the elastic case, the heavy expression of the parameterized convex integrand in~\eqref{eqintro3} does not allow, to our best knowledge, a tractable analysis of its twice epi-differentiability. 

At this step of our researches, we arrive to the conclusion that, even if our methodology based on the notion of twice epi-differentiability allows to analyze a shape optimization problem involving the Tresca friction law in the scalar case (see our previous paper~\cite{ABCJ}), we are not able, at least for now, to pursue our methodology in the general elastic case.

%\textcolor{red}{ (Aymeric : ne devrait-on pas plutôt dire qu'à ce jour on a pas été capable de calculer la twice épi de l'intégrale de (1.5), et que c'est une question ouverte pour un travail futur ? Car je ne sais pas si on peut vraiment dire que notre méthode échoue pour l'élasticité, c'est plutôt que je n'ai pas réussi à la calculer mais c'est peut-être possible. Qu'en pensez-vous ?}

\paragraph{Overcoming the major obstruction in the two-dimensional case~$d=2$.}
In the two-dimensional case~$d=2$, one can fix~$\tau_0 \in \mathcal{C}^{0}(\Gamma_0,\R^2)$ an oriented (with an orientation arbitrarily fixed) orthonormal vector to~$\nn_0 \in \mathcal{C}^{0}(\Gamma_0,\R^2)$ and get that $(\mathrm{I}+t\nabla{\theta})\tau_0\cdot(\mathrm{I}+t\nabla{\theta}^{\top})^{-1} \nn_0=0$ on~$\Gamma_0$. Therefore Inequality~\eqref{inequalitytogetderivmat} can be rewritten as
\begin{multline*}\label{inequalityineedagain}
\displaystyle\int_{\Omega_0}\mathrm{J}_{t}\mathrm{A}\left[\nabla{\overline{u}_t}\left(\mathrm{I}+t\nabla{\theta}\right)^{-1}\right]:\nabla{\left(v-\overline{u}_t\right)}\left(\mathrm{I}+t\nabla{\theta}\right)^{-1}+\int_{\Gamma_{\mathrm{T}_0}}\frac{g_{t}\mathrm {J}_{\mathrm{T}_{t}}}{\left\|(\mathrm{I}+t\nabla{\theta})\tau_0\right\|}\left|v\cdot(\mathrm{I}+t\nabla{\theta})\tau_0\right|\\-\int_{\Gamma_{\mathrm{T}_0}}\frac{g_{t}\mathrm {J}_{\mathrm{T}_{t}}}{\left\|(\mathrm{I}+t\nabla{\theta})\tau_0\right\|}\left|\overline{u}_t\cdot(\mathrm{I}+t\nabla{\theta})\tau_0\right|\geq\int_{\Omega_0}f_{t}\mathrm{J}_{t}\cdot\left(v-\overline{u}_t\right), \qquad \forall v\in\HH^1_{\mathrm{D}}(\Omega_0,\R^2).
\end{multline*}
Therefore we introduce $\overline{\overline{u}}_t:=(\mathrm{I}+t\nabla{\theta}^{\top})\overline{u}_{t}\in\HH^1_{\mathrm{D}}(\Omega_0,\R^2)$ which satisfies
\begin{multline}\label{inequalityaagain23}
\displaystyle\int_{\Omega_0}\mathrm{J}_{t}\mathrm{A}\left[\nabla{\left(\left(\mathrm{I}+t\nabla{\theta}^{\top}\right)^{-1}\overline{\overline{u}}_t\right)}\left(\mathrm{I}+t\nabla{\theta}\right)^{-1}\right]:\nabla{\left(\left(\mathrm{I}+t\nabla{\theta}^{\top}\right)^{-1}\left(v-\overline{\overline{u}}_t\right)\right)}\left(\mathrm{I}+t\nabla{\theta}\right)^{-1}\\+\int_{\Gamma_{\mathrm{T}_0}}\frac{g_{t}\mathrm {J}_{\mathrm{T}_{t}}}{\left\|(\mathrm{I}+t\nabla{\theta})\tau_0\right\|} \vert v \cdot \tau_0 \vert -\int_{\Gamma_{\mathrm{T}_0}}\frac{g_{t}\mathrm {J}_{\mathrm{T}_{t}}}{\left\|(\mathrm{I}+t\nabla{\theta})\tau_0\right\|} \vert \overline{\overline{u}}_t \cdot \tau_0 \vert \\ \geq\int_{\Omega_0}\left(\mathrm{I}+t\nabla{\theta}\right)^{-1}f_{t}\mathrm{J}_{t}\cdot\left(v-\overline{\overline{u}}_t\right), \qquad \forall v\in\HH^1_{\mathrm{D}}(\Omega_0,\R^2),
\end{multline}
and thus can be expressed as~$\overline{\overline{u}}_t=\mathrm{prox}_{ \overline{\overline{\phi}}_t }(\overline{\overline{F}}_t)$ where $\overline{\overline{F}}_t\in\HH^{1}_{\mathrm{D}}(\Omega_0,\R^2)$ is the unique solution to the parameterized variational equality
\begin{multline}\label{equalitypastropcomp}
\int_{\Omega_0}\mathrm{J}_{t}\mathrm{A}\left[\nabla{\left(\left(\mathrm{I}+t\nabla{\theta}^{\top}\right)^{-1}\overline{\overline{F}}_t\right)}\left(\mathrm{I}+t\nabla{\theta}\right)^{-1}\right]:\nabla{\left(\left(\mathrm{I}+t\nabla{\theta}^{\top}\right)^{-1}v\right)}\left(\mathrm{I}+t\nabla{\theta}\right)^{-1}\\= \int_{\Omega_0}\left(\mathrm{I}+t\nabla{\theta}\right)^{-1} f_{t}\mathrm{J}_{t}\cdot v, \qquad \forall v\in\HH^{1}_{\mathrm{D}}(\Omega_0,\R^2),
\end{multline}
and~$\mathrm{prox}_{ \overline{\overline{\phi}}_t } : \HH^1_{\mathrm{D}}(\Omega_0,\R^2) \to \HH^1_{\mathrm{D}}(\Omega_0,\R^2)$ is the proximal operator associated with the parameterized convex functional $\overline{\overline{\phi}}_t : \HH^1_{\mathrm{D}}(\Omega_0,\R^2) \to \R$ defined by
\begin{equation}\label{trescaparajeneed2}
    \displaystyle\fonction{\overline{\overline{\phi}}_t}{\HH^1_{\mathrm{D}}(\Omega_0,\R^2)}{\R}{v}{\displaystyle\int_{\Gamma_{\mathrm{T}_0}}\frac{g_{t}\mathrm {J}_{\mathrm{T}_{t}}}{\left\|(\mathrm{I}+t\nabla{\theta})\tau_0\right\|} \vert v \cdot \tau_0 \vert.}
\end{equation} 
As we will see in the present paper (see Section~\ref{setting}), the parameterized convex integrand in the expression~\eqref{trescaparajeneed2} (simpler than the one in~\eqref{eqintro3}) allows a tractable analysis of its twice epi-differentiability, and therefore allows to continue our methodology (but only in the two-dimensional case~$d=2$). Precisely, thanks to the twice epi-differentiability of the parameterized convex functional $\overline{\overline{\phi}}_t$, we are able to obtain from Proposition~\ref{TheoABC2018} a characterization of the derivative of the map~$t \in \R_+ \mapsto \overline{\overline{u}}_{t} \in \HH^{1}_{\mathrm{D}}(\Omega_{0},\R^d)$ at~$t=0$, denoted by~$\overline{\overline{u}}'_0 \in \HH^{1}_{\mathrm{D}}(\Omega_{0},\R^d) $, and then to deduce successively a characterization of the directional material derivative given by~$\overline{u}'_{0}=\overline{\overline{u}}'_{0}-\nabla{\theta}^{\top}u_0\in\HH^{1}_{\mathrm{D}}(\Omega_0,\R^2)$, then a characterization of the directional shape derivative given by~$u'_0 := \overline{u}'_0 - \nabla u_0 \theta $, and finally an expression of the shape gradient~$\mathcal{J}'(\Omega_0)(\theta)$. These results are summarized in the next paragraph.

\begin{myRem}\label{remarquesurscalarprod}\normalfont
As described above, the changes of variables used in this paper lead to~$\overline{\overline{u}}_t=\mathrm{prox}_{ \overline{\overline{\phi}}_t }(\overline{\overline{F}}_t)$ where the  proximal operator is defined on the Hilbert space $\HH^1(\Omega_0,\R^2)$ endowed with the \textit{parameterized} scalar product given by
\begin{multline*}
    (v_1,v_2)\in\left(\HH^1_{\mathrm{D}}(\Omega_0,\R^2)\right)^2\longmapsto\\ \int_{\Omega_0}\mathrm{J}_{t}\mathrm{A}\left[\nabla{\left(\left(\mathrm{I}+t\nabla{\theta}^{\top}\right)^{-1}v_1\right)}\left(\mathrm{I}+t\nabla{\theta}\right)^{-1}\right]:\nabla{\left(\left(\mathrm{I}+t\nabla{\theta}^{\top}\right)^{-1}v_2\right)}\left(\mathrm{I}+t\nabla{\theta}\right)^{-1}\in\R,
\end{multline*}
 thus Proposition~\ref{TheoABC2018} cannot be applied. This difficulty can be overcome by adding the $t$-independent scalar product $\dual{\cdot}{\cdot}_{\HH^1(\Omega_0,\R^2)}$ (see~\eqref{scalarusuel11}) to both members of Inequality~\eqref{inequalityaagain23} which leads us to replace $\overline{\overline{F}}_t$ by another solution that satisfies a more complex variational equality than Equality~\eqref{equalitypastropcomp}. Actually, this difficulty also appears in the three-dimensional case $d=3$ which can be overcomed in the same manner, and also (in an easier way) in the scalar case in our previous paper~\cite{ABCJ}.
\end{myRem}

\paragraph{Main results in the two-dimensional case~$d=2$.} We summarize here our main theoretical results (given in Theorems~\ref{materialderiv1} and~\ref{shapederivofJTresca1}). However we present the directional material and shape derivatives, and the shape gradient of $\mathcal{J}$, under some additional regularity assumptions, precisely in the framework of Corollaries~\ref{materialderiv2},~\ref{shapederiv1} and~\ref{shapederivofJ}, because their expressions are more elegant in that case. Furthermore, to ease the notations, we will use the notations~$\nn := \nn_0$ and~$\tau := \tau_0$.
\begin{enumerate}
  \item[(i)] Under some appropriate assumptions described in Corollary~\ref{materialderiv2}, the map
$t\in\R_{+} \mapsto \overline{u}_{t} \in \HH^{1}_{\mathrm{D}}(\Omega_0,\R^2)$ is differentiable at $t=0$, and the directional material derivative $\overline{u}'_{0}\in\HH^{1}_{\mathrm{D}}(\Omega_0,\R^2)$ is the unique weak solution to the \textit{tangential Signorini problem} (see, e.g.,~\cite[Section~2.1.2]{BCJDC2}) given by
\begin{equation*}
{
{\arraycolsep=2pt
\left\{
\begin{array}{rcll}
-\mathrm{div}\left(\mathrm{A}\mathrm{e}(\overline{u}'_0)\right)+\mathrm{div}\left(\mathrm{A}\mathrm{e}\left(\nabla{u_0}\theta\right)\right) &=& 0  & \text{ in } \Omega_0 , \\[5pt]
\overline{u}'_0 & = & 0  & \text{ on } \Gamma_{\mathrm{D}} ,\\[5pt]
\sigma_{\nn}(\overline{u}'_0)-{\xi^{m}(\theta)}_\nn & = & 0  & \text{ on } \Gamma_{\mathrm{T_0}} ,\\[5pt]
\sigma_\tau (\overline{u}'_0)+p(\theta)\frac{u_{0_\tau}}{\left\|u_{0_\tau}\right\|} -{\xi^{m}(\theta)}_{\tau} & = & 0  & \text{ on } \Gamma_{\mathrm{T_0}^{u_0,g}_{\mathrm{N}}} ,\\[5pt]
{\overline{u}'_0}_{\tau}+\left(\nabla{\theta}^{\top}u_0\right)_\tau & = & 0  & \text{ on } \Gamma_{\mathrm{T_0}^{u_0,g}_{\mathrm{D}}},\\[5pt]
\left({\overline{u}'_0}_{\tau}+\left(\nabla{\theta}^{\top}u_0\right)_\tau\right)\in\R_{-}\frac{\sigma_{\tau}(u_0)}{g} \\ [5pt] 
 \left(\sigma_\tau (\overline{u}'_0)-p(\theta) \frac{\sigma_{\tau}(u_0)}{g}-{\xi^{m}(\theta)}_{\tau}\right)\cdot \frac{\sigma_{\tau}(u_0)}{g}\leq0 \\ [5pt]
 \left({\overline{u}'_0}_{\tau}+\left(\nabla{\theta}^{\top}u_0\right)_\tau\right)\cdot\left(\sigma_\tau (\overline{u}'_0)-p(\theta) \frac{\sigma_{\tau}(u_0)}{g}-{\xi^{m}(\theta)}_{\tau}\right)  & = & 0  & \text{ on } \Gamma_{\mathrm{T_0}^{u_0,g}_{\mathrm{S}}}.
\end{array}
\right.}}
\end{equation*}
where $\xi^m(\theta):=\left((\mathrm{A}\mathrm{e}\left({u_0}\right))\nabla{\theta}^{\top}+\mathrm{A}(\nabla{u_0}\nabla{\theta})+(\nabla{\theta}-\mathrm{div}(\theta)\mathrm{I})\mathrm{A}\mathrm{e}\left({u_0}\right)\right)\nn\in\LL^2(\Gamma_{\mathrm{T}_0},\R^2)$ and $p(\theta) :=\nabla{g}\cdot\theta+g\left( \mathrm{div}_{\tau}(\theta)-\nabla{\theta}\tau\cdot\tau\right)\in\LL^2(\Gamma_{\mathrm{T}_0})$, and where $\Gamma_{\mathrm{T_0}}$ is decomposed, up to a null set, as~$\Gamma_{\mathrm{T_0}^{u_0,g}_{\mathrm{N}}}\cup\Gamma_{\mathrm{T_0}^{u_0,g}_{\mathrm{D}}}\cup\Gamma_{\mathrm{T_0}^{u_0,g}_{\mathrm{S}}}$ (see details in Theorem~\ref{materialderiv1}). We emphasize the notable fact that the boundary conditions which appear on $\Gamma_{\mathrm{T_0}^{u_0,g}_{\mathrm{S}}}$ are called \textit{tangential Signorini's unilateral conditions} because they are very close to the classical Signorini unilateral conditions (see, e.g.,~\cite{15SIG,16SIG}) except that, here, they are concerned with the \textit{tangential} components (instead of the \textit{normal} components in the classical case).
\item[(ii)]  We deduce in Corollary~\ref{shapederiv1} that, under appropriate assumptions, \textit{the directional shape derivative}, defined by~$u'_{0}:=\overline{u}'_0-\nabla{u_0}\theta\in\HH^{1}_{\mathrm{D}}(\Omega_0,\R^2)$, is the unique weak solution to the tangential Signorini problem given by
\begin{equation*}
{
{\arraycolsep=2pt
\left\{
\begin{array}{rcll}
-\mathrm{div}\left(\mathrm{A}\mathrm{e}(u'_0)\right) &=& 0  & \text{ in } \Omega_0 , \\[5pt]
u'_0 & = & 0  & \text{ on } \Gamma_{\mathrm{D}} ,\\[5pt]
\sigma_{\nn}(u'_0)-{\xi^{s}(\theta)}_\nn & = & 0  & \text{ on } \Gamma_{\mathrm{T_0}} ,\\[5pt]
\sigma_\tau (u'_0) +p(\theta)\frac{u_{0_\tau}}{\left\|u_{0_\tau}\right\|} -{\xi^{s}(\theta)}_{\tau} & = & 0  & \text{ on } \Gamma_{\mathrm{T_0}^{u_0,g}_{\mathrm{N}}} ,\\[5pt]
{u'_0}_{\tau}- W(\theta)_\tau  & = & 0 & \text{ on } \Gamma_{\mathrm{T_0}^{u_0,g}_{\mathrm{D}}},\\[5pt]
\left({u'_0}_{\tau}-W(\theta)_\tau\right)\in\R_{-}\frac{\sigma_{\tau}(u_0)}{g} \\ [5pt] \text{and } \left(\sigma_\tau (\overline{u}'_0)-p(\theta) \frac{\sigma_{\tau}(u_0)}{g}-{\xi^{s}(\theta)}_{\tau}\right)\cdot \frac{\sigma_{\tau}(u_0)}{g}\leq0 \\ [5pt]
\text{and } \left({u'_0}_{\tau}-W(\theta)_\tau\right)\cdot\left(\sigma_\tau (\overline{u}'_0)-p(\theta) \frac{\sigma_{\tau}(u_0)}{g}-{\xi^{s}(\theta)}_{\tau}\right)  & = & 0  & \text{ on } \Gamma_{\mathrm{T_0}^{u_0,g}_{\mathrm{S}}}.
\end{array}
\right.}}
\end{equation*}
where $W(\theta):=-\nabla{\theta}^{\top}u_0-\nabla{u_0}\theta\in\HH^{1}(\Omega_0,\R^2)$ and
\begin{multline*}
    \xi^s(\theta):=\theta \cdot \nn\left(\partial_{\nn}\left(\mathrm{A}\mathrm{e}\left({u_0}\right)\nn\right)-\partial_{\nn}\left(\mathrm{A}\mathrm{e}\left({u_0}\right)\right)\nn\right)+\mathrm{A}\mathrm{e}\left({u_0}\right)\nabla_{\tau}\left(\theta\cdot\nn\right)-\nabla{\left(\mathrm{A}\mathrm{e}\left({u_0}\right)\nn\right)}\theta\\+\left(\nabla{\theta}-\mathrm{div}_{\tau}(\theta)\mathrm{I}\right)\mathrm{A}\mathrm{e}\left({u_0}\right)\nn\in\LL^2(\Gamma_{\mathrm{T}_0},\R^2),
    \end{multline*}
\item[(iii)] Finally the two previous items are used to obtain Corollary~\ref{shapederivofJ} asserting that, under appropriate assumptions, the shape gradient of $\mathcal{J}$ at~$\Omega_{0}$ in the direction~$\theta$ is given by
\begin{multline*}
    \mathcal{J}'(\Omega_{0})(\theta)=\int_{\Gamma_{\mathrm{T}_{0}}} \theta\cdot\nn\left(\frac{\mathrm{A}\mathrm{e}(u_0):\mathrm{e}(u_0)}{2}-f\cdot u_{0}-\sigma_{\tau}(u_0)\cdot\partial_{\nn}(u_0)+\left\|{u_0}_\tau \right\|\left(Hg+\partial_{\nn}g\right)\right)\\+\int_{\Gamma_{\mathrm{T_0}}}{u_0}_\nn\sigma_\tau (u_0) \cdot\tau\left(\nabla{\tau}\theta_{\tau}-\nabla{\theta}\tau\right)\cdot\nn, 
\end{multline*} 
where $H$ stands for the mean curvature of $\Gamma_{0}$. One can notice that $\mathcal{J}'(\Omega_0)$ depends only on~$u_0$ (and not on $u’_0$). Hence its expression is explicit and linear with respect to the direction~$\theta$ and allows us to exhibit a descent direction for $\mathcal{J}$ at~$\Omega_0$ (see Section~\ref{numericalsim} for details). Then, using this descent direction together with a basic Uzawa algorithm to take into account the volume constraint, we perform in Section~\ref{numericalsim} numerical simulations to solve the shape optimization problem~\eqref{shapeOptim} on a toy example.
\end{enumerate}

\paragraph{Obstruction in the higher-dimensional case~$d \geq 3$ and additional comments.} 

In the higher-dimensional case~$d \geq 3$, the parameterized convex functional is given by~\eqref{eqintro3}
and, to the best of our knowledge, we did not find any change of variables in order to simplify the expression of its integrand. Therefore, for almost all $s\in\Gamma_{\mathrm{T}_0}$, we would have to investigate the twice epi-differentiability of the parameterized convex map
$$ x\in\R^d \longmapsto g_{t}\mathrm {J}_{\mathrm{T}_{t}} \left\| x- \left( x\cdot\frac{(\mathrm{I}+t\nabla{\theta}^{\top})^{-1}\nn}{\left\|(\mathrm{I}+t\nabla{\theta}^{\top})^{-1}\nn\right\|^2} \right)(\mathrm{I}+t\nabla{\theta}^{\top})^{-1}\nn \right\|  \in\R_+, $$ 
that we did not succeed to prove and to compute. This is an highly nontrivial work and an interesting topic for further researches.

\begin{myRem}\normalfont
During our bibliographical researches, we discovered a surprising hypothesis made in the paper~\cite{SOKOZOLE2} that is concerned, as the present work, with a shape optimization problem in a two dimensional case, involving the Tresca friction law in a linear elastic model, but with a different methodology based on dualization. With our notations and framework, this hypothesis consists in assuming that, for sufficiently small~$t \geq 0$, it holds that
$$ (\mathrm{I}+t\nabla{\theta}) \nn_0 =  \frac{(\mathrm{I}+t\nabla{\theta}^{\top})^{-1}\nn_0}{\left\|(\mathrm{I}+t\nabla{\theta}^{\top})^{-1}\nn_0\right\|^2}. $$
With this hypothesis, we have observed that our methodology can be continued (even in the elastic case) by applying a well-suited change of variables in the expression of~$\overline{\phi}_t$ in order to get the simpler expression
$v\in\HH^1_{\mathrm{D}}(\Omega_0,\R^2) \rightarrow \int_{\Gamma_{\mathrm{T}_0}}g_{t}\mathrm {J}_{\mathrm{T}_{t}}\|(\mathrm{I}+t\nabla{\theta})\tau_0\| | v \cdot \tau_0 |,$
that allows a tractable analysis of the twice epi-differentiability of its integrand but which differs from~\eqref{trescaparajeneed2}. In this regard, we refer to our paper~\cite{jdc} (concerned with a shape optimization problem involving Signorini's unilateral conditions in the elastic case) in which the same change of variables has been applied (but without assuming the above hypothesis which is useless in the context of~\cite{jdc}). However we emphasize that the above hypothesis is not satisfactory. Indeed, one can easily construct numerous counterexamples for which the above equality is not satisfied. For example, in the two-dimensional case~$d=2$, one can easily construct a situation where
$$ \nabla \theta = \left( \begin{array}{cc}
   1  & 1 \\
-1     & 0
\end{array} \right) \quad \text{and} \quad \nn_0 = \left( \begin{array}{c}
   1   \\
 0
\end{array} \right), $$
for which the above equality is not true. We refer to~\cite[Remark A.2.2. p.198]{MAURY} for additional comments on this hypothesis and what its implies on the direction $\theta$.
\end{myRem}

\begin{myRem}\normalfont
We mention that our methodology has already been successfully applied in the elastic case (in any dimension) in our previous paper~\cite{BCJDC2} (with also the emergence of tangential Signorini's unilateral conditions), but in order to solve an optimal control problem. To be clear, we underline that the obstruction encountered in the present paper does not appear in the (simpler) context of~\cite{BCJDC2}.
\end{myRem}

\paragraph{Organization of the paper.} The paper is organized as follows. Section~\ref{mainresult} is
the core of the present work, where the main results are stated and proved. In Section~\ref{numericalsim}, numerical simulations are performed to solve the shape optimization problem~\eqref{shapeOptim} on a toy example. Finally, Appendices~\ref{appendix} and~\ref{appendix2} are dedicated to recalls on the notion of twice epi-differentiability and on differential geometry respectively.

\section{Main results in the two-dimensional case~$d=2$}\label{mainresult}

The whole paper is now dedicated to the two-dimensional case~$d=2$. Furthermore, all along the present section, variational equalities and variational inequalities will be involved, as well as boundary value problems (Dirichlet-Neumann problem, tangential Signorini problem, Tresca friction problem). We refer to~\cite[Section~2.1]{BCJDC2} for notions of strong/weak solutions, existence/uniqueness results and proximal expressions of the solutions.

This section is organized as follows. In Section~\ref{secprelim}, we precise the notations, terminologies and assumptions used in our linear elastic model involving the Tresca friction law. Section~\ref{setting} is the technical part of the present paper. There, the notion of twice epi-differentiability and Proposition~\ref{TheoABC2018} are used in order to characterize the derivative~$\overline{\overline{u}}'_0$ as the unique solution to a variational inequality (see Proposition~\ref{deriv11}). From that result, we deduce in Section~\ref{section23a} a characterization of the directional material derivative~$\overline{u}'_0$ as the unique solution to a variational inequality (see Theorem~\ref{materialderiv1}). Then, under additional regularity assumptions, we characterize the directional material derivative~$\overline{u}'_0$ and the directional shape derivative~$u'_0$ as the unique weak solutions to tangential Signorini problems (see Corollaries~\ref{materialderiv2} and~\ref{shapederiv1}). Finally, in Section~\ref{energyfunctional}, we provide an expression of the shape gradient~$\mathcal{J}'(\Omega)(\theta)$ of the Tresca energy functional~$\mathcal{J}$ (see Theorem~\ref{shapederivofJTresca1} and Corollary~\ref{shapederivofJ}).

\subsection{Precisions on the notations, terminologies and assumptions used in our model}\label{secprelim}
Consider the shape optimization problem~\eqref{shapeOptim} in the two-dimensional case~$d=2$ and let us give some precisions on the notations, terminologies and assumptions used in our model. 

First, the notation $\cdot$ stands for the standard inner product on $\R^2$ and $||\cdot||$ for the corresponding Euclidean norm. We denote by $\mathrm{B}(0,1)$ the unit open ball of~$\R^2$ centered at $0$, with its boundary denoted by~$\mathrm{bd}(\mathrm{B}(0,1))$. Finally we denote by~$:$ the scalar product on~$\R^{2 \times 2}$ defined by $\mathrm{B}:\mathrm{C}=\sum_{i=1}^2\mathrm{B}_i\cdot\mathrm{C}_i$ for all $\mathrm{B}$, $\mathrm{C}\in\R^{2 \times 2}$, where~$\mathrm{B}_i\in\R^2$ (resp.\ $\mathrm{C}_i\in\R^2$) stands for the transpose of the~$i$-th line of~$\mathrm{B}$ (resp.\ $\mathrm{C}$) for all $i\in\{1,2\}$. 
%We also denote, for all~$(x,y)\in\R^2\times\R^2$, by~$xy^{\top}$ the matrix whose the $i$-th line is given by \textcolor{red}{ the transpose} of the vector $x_iy$, where~$x_i\in\R$ is the $i$-th component of~$x$, for all $i\in\{1,2\}$.   \textcolor{blue}{(il me semble que l'on prend par convention les vecteurs verticaux, et non horizontaux. Est-ce que toutes ces définitions respectent bien cette convention ? Ou du moins sont-elles bien harmonisées ?)} \textcolor{red}{Aymeric :  J'ai modifier. Est-ce plus clair ?}

Second, in the Tresca friction problem~\eqref{Trescaproblem2222} for some~$\Omega \in \mathcal{U}_{\rm ref}$, recall that~$\mathrm{A} \in \mathrm{L}^{\infty}(\Omega,\R^{2^4})$ stands for the stiffness tensor, assumed to be linear with constant coefficients (denoted by $a_{ijkl}$ for all $\left(i,j,k,l\right)\in\left\{1,2\right\}^{4}$), and~$\mathrm{e}$ is the infinitesimal strain tensor defined by~$\mathrm{e} : v\in\HH^1(\Omega,\R^2) \mapsto (\nabla{v}+\nabla{ v}^{\top})/2 \in \mathrm{L}^{2}(\Omega,\R^{2 \times 2})$. In this paper we assume that there exists a constant~$\alpha>0$ such that all coefficients of $\mathrm{A}$ and $e$ (denoted by $\epsilon_{ij}$ for all $\left(i,j\right)\in\left\{1,2\right\}^{2}$) satisfy
 \begin{equation*}
a_{ijkl}=a_{jikl}=a_{lkij} \; \text{ and } \;
\displaystyle\sum_{i=1}^2\sum_{j=1}^2\sum_{k=1}^2\sum_{l=1}^2a_{ijkl}\epsilon_{ij}(v_1)(x)\epsilon_{kl}(v_2)(x)\geq\alpha\sum_{i=1}^2\sum_{j=1}^2\epsilon_{ij}(v_1)(x)\epsilon_{ij}(v_2)(x),
\end{equation*}    
for all $v_1$, $v_2\in\HH^1(\Omega,\R^2)$ and for \textit{a.e.}\ $x\in\Omega$. From the symmetry assumption on $\mathrm{A}$, note that~$\mathrm{A}\mathrm{e}(v)=\mathrm{A}\nabla{v}$ for all~$v\in\HH^1(\Omega,\R^2)$. Moreover, since $ \Gamma_{\mathrm{D}} $ has a positive measure, it follows that 
\begin{equation}\label{scalarusuel11}
    \fonction{\dual{\cdot}{\cdot}_{\HH^{1}_{\mathrm{D}}(\Omega,\R^2)}}{ \left(\HH^1_{\mathrm{D}}(\Omega,\R^2)\right)^2 }{\R}{(v_1,v_2)}{\displaystyle\int_{\Omega}\mathrm{A}\mathrm{e}(v_1):\mathrm{e}(v_2),}
\end{equation}
is a scalar product on $\HH^{1}_{\mathrm{D}}(\Omega,\R^2)$ (see, e.g.,~\cite[Chapter~3]{DUVAUTLIONS}) and we denote by $\left\|\cdot\right\|_{\HH^{1}_{\mathrm{D}}(\Omega,\R^2)}$ the corresponding norm.

Finally, for any~$\Omega \in \mathcal{U}_{\rm ref}$, the notation~$\nn \in \mathcal{C}^{0}(\Gamma,\R^2)$ stands for the outward-pointing unit normal vector to~$\Gamma$ and, since we deal with the two-dimensional case~$d=2$, we can fix~$\tau \in \mathcal{C}^{0}(\Gamma,\R^2)$ an oriented (with an orientation arbitrarily fixed) orthonormal vector to~$\nn$. For any~$v\in\LL^{2}(\Gamma,\R^2)$, one has the decomposition $v=v_{\nn}\nn+v_\tau$, where $v_{\nn}:=v\cdot\nn\in\LL^{2}(\Gamma,\R)$ and~$v_{\tau}:=v-v_{\nn}\nn = ( v \cdot \tau ) \tau \in\LL^2(\Gamma,\R^2)$. In particular, if the stress vector $\mathrm{A}\mathrm{e}(v)\nn$ belongs to $\LL^2(\Gamma,\R^2)$ for some~$v\in\HH^1(\Omega,\R^2)$, then~$\mathrm{A}\mathrm{e}(v)\nn = \sigma_{\nn}(v)\nn + \sigma_\tau(v)  $, where~$\sigma_{\nn}(v) := \mathrm{A}\mathrm{e}(v)\nn \cdot \nn \in\LL^{2}(\Gamma,\R)$ is the normal stress and~$\sigma_{\tau}(v) := \mathrm{A}\mathrm{e}(v)\nn - \sigma_{\nn}(v)\nn = (\mathrm{A}\mathrm{e}(v)\nn \cdot\tau)\tau \in\LL^{2}(\Gamma,\R^2)$ is the shear stress.

%\textcolor{blue}{C'est quand même bizarre que la notation~$\sigma (v) := \mathrm{A}\mathrm{e}(v) \nn$ ne soit pas répandue dans la littérature. Es-tu sûr que les gens utilisent~$\mathrm{T} (v) := \mathrm{A}\mathrm{e}(v) \nn$ ? En effet, si je note~$\sigma (v) := \mathrm{A}\mathrm{e}(v) \nn$, alors c'est parfaitement cohérent avec les notations introduites ci-dessus et le fait que~$\sigma_{\nn}(v) = \sigma(v) \cdot \nn$ et~$\sigma_{\tau}(v) = \sigma(v) - \sigma_{\nn}(v) \nn$.} \textcolor{red}{Aymeric : Oui $\sigma(v)$ est le tenseur de Cauchy, c'est une matrice: $\sigma(v)=Ae(v)$. Même si tes notations sont plus cohérentes, je ne crois pas que la notation $\sigma(v):=Ae(v) \nn$ soit utilisée dans la littérature. }

%In the two-dimensional case~$d=2$, we can fix. For~$v\in\LL^{2}(\Gamma,\R^2)$, we obtain that~$v_\tau = ( v \cdot \tau ) \tau$, and for~$v\in\HH^1(\Omega,\R^2)$ such that~$ \sigma(v) \in \LL^2(\Gamma,\R^2)$, we can write~$\sigma_\tau(v)=(\sigma(v) \cdot\tau)\tau$.

\subsection{Twice epi-differentiability and the derivative~$\overline{\overline{u}}'_0$}\label{setting}

As in Introduction, let~$\Omega_{0} \in \mathcal{U}_{\mathrm{ref}}$ and~$\theta\in \mathcal{C}_{\mathrm{D}}^{2,\infty}(\R^2,\R^2)$ be fixed for the whole section. For any~$t\geq0$ sufficiently small, we denote by~$\Omega_{t}:=(\mathrm{id}+t\theta)(\Omega_{0}) \in \mathcal{U}_{\mathrm{ref}}$ and by~$u_{t} := u_{\Omega_t} \in\HH^{1}_{\mathrm{D}}(\Omega_{t},\R^2)$. Then we introduce~$\overline{u}_{t}:=u_{t}\circ(\mathrm{id}+t\theta)\in\HH^{1}_{\mathrm{D}}(\Omega_{0},\R^2)$ and~$\overline{\overline{u}}_t:=(\mathrm{I}+t\nabla{\theta}^{\top})\overline{u}_{t}\in\HH^1_{\mathrm{D}}(\Omega_0,\R^2)$. In the sequel, to ease the notations, we denote by~$\nn := \nn_0$ and~$\tau := \tau_0$.

In this section our objective is to get a characterization of the derivative~$\overline{\overline{u}}'_0 \in\HH^1_{\mathrm{D}}(\Omega_0,\R^2)$ (if it exists). For this purpose, our methodology relies on the equality~$ \overline{\overline{u}}_t=\mathrm{prox}_{\overline{\overline{\phi}}_t }(\overline{\overline{F}}_t)$ (established in Introduction) and on the application of Proposition~\ref{TheoABC2018}. However, as mentioned in Remark~\ref{remarquesurscalarprod}, the equality~$ \overline{\overline{u}}_t=\mathrm{prox}_{\overline{\overline{\phi}}_t }(\overline{\overline{F}}_t)$
holds true, but considered on the Hilbert space $\HH^{1}_{\mathrm{D}}(\Omega_0,\R^2)$ endowed with the \textit{parameterized} scalar product given by
\begin{multline*}
    (v_1,v_2)\in\left(\HH^1_{\mathrm{D}}(\Omega_0,\R^2)\right)^2\longmapsto\\ \int_{\Omega_0}\mathrm{J}_{t}\mathrm{A}\left[\nabla{\left(\left(\mathrm{I}+t\nabla{\theta}^{\top}\right)^{-1}v_1\right)}\left(\mathrm{I}+t\nabla{\theta}\right)^{-1}\right]:\nabla{\left(\left(\mathrm{I}+t\nabla{\theta}^{\top}\right)^{-1}v_2\right)}\left(\mathrm{I}+t\nabla{\theta}\right)^{-1}\in\R.
\end{multline*}
Therefore one cannot apply Proposition~\ref{TheoABC2018} directly. To overcome this difficulty, as in~\cite{ABCJ}, let us add~$\dual{\overline{\overline{u}}_t}{v-\overline{\overline{u}}_t}_{\HH^{1}_{\mathrm{D}}(\Omega_0,\R^{2})}$ to both members of~Inequality~\eqref{inequalityaagain23}. Then, by using the equality~$\mathrm{B}:\mathrm{C}\mathrm{D}=\mathrm{B}\mathrm{D}^{\top}:\mathrm{C}$ which is true for all~$\mathrm{B}$, $\mathrm{C}$, $\mathrm{D}\in\R^{2\times 2}$, we obtain that
\begin{multline*}
\displaystyle\dual{\overline{\overline{u}}_t}{v-\overline{\overline{u}}_t}_{\HH^{1}_{\mathrm{D}}(\Omega_0,\R^{2})}+\int_{\Gamma_{\mathrm{T}_0}}\frac{g_{t}\mathrm {J}_{\mathrm{T}_{t}}}{\left\|(\mathrm{I}+t\nabla{\theta})\tau\right\|} \vert v \cdot \tau \vert -\int_{\Gamma_{\mathrm{T}_0}}\frac{g_{t}\mathrm {J}_{\mathrm{T}_{t}}}{\left\|(\mathrm{I}+t\nabla{\theta})\tau\right\|} \vert \overline{\overline{u}}_t \cdot \tau \vert \\\geq-\int_{\Omega_0}\mathrm{J}_{t}\mathrm{A}\left[\nabla{\left(\left(\mathrm{I}+t\nabla{\theta}^{\top}\right)^{-1}\overline{\overline{u}}_t\right)}\left(\mathrm{I}+t\nabla{\theta}\right)^{-1}\right]\left(\mathrm{I}+t\nabla{\theta}^{\top}\right)^{-1}:\nabla{\left(\left(\mathrm{I}+t\nabla{\theta}^{\top}\right)^{-1}\left(v-\overline{\overline{u}}_t\right)\right)}\\ +\int_{\Omega_0}\left(\mathrm{I}+t\nabla{\theta}\right)^{-1}f_{t}\mathrm{J}_{t}\cdot(v-\overline{\overline{u}}_t)+\int_{\Omega_0}\mathrm{A}\mathrm{e}\left({\overline{\overline{u}}_t}\right):\mathrm{e}{\left(v-\overline{\overline{u}}_t\right)}, \qquad \forall v\in\HH^{1}_{\mathrm{D}}(\Omega_0,\R^{2}),
\end{multline*}
and thus we get the equality~$ \overline{\overline{u}}_t=\mathrm{prox}_{\overline{\overline{\phi}}_t }(E_t)$ where $E_{t}\in\HH^{1}_{\mathrm{D}}(\Omega_0,\R^2)$ is the unique solution to the parameterized variational equality
\begin{multline*}\label{perturprob}
\displaystyle\dual{E_t}{v}_{\HH^{1}_{\mathrm{D}}(\Omega_0,\R^{2})}=\int_{\Omega_0}\left(\mathrm{I}+t\nabla{\theta}\right)^{-1}f_{t}\mathrm{J}_{t}\cdot v
\\-\int_{\Omega_0}\mathrm{J}_{t}\mathrm{A}\left[\nabla{\left(\left(\mathrm{I}+t\nabla{\theta}^{\top}\right)^{-1}\overline{\overline{u}}_t\right)}\left(\mathrm{I}+t\nabla{\theta}\right)^{-1}\right]\left(\mathrm{I}+t\nabla{\theta}^{\top}\right)^{-1}:\nabla{\left(\left(\mathrm{I}+t\nabla{\theta}^{\top}\right)^{-1}v\right)}\\+\int_{\Omega_0}\mathrm{A}\mathrm{e}\left({\overline{\overline{u}}_t}\right):\mathrm{e}\left({v}\right), \qquad \forall v\in\HH^{1}_{\mathrm{D}}(\Omega_0,\R^2),
\end{multline*}
considered on the Hilbert space $\HH^{1}_{\mathrm{D}}(\Omega_0,\R^2)$ endowed with the \textit{nonparameterized} scalar product~$\dual{\cdot}{\cdot}_{\HH^{1}_{\mathrm{D}}(\Omega_0,\R^2)}$. 

Furthermore, to be in accordance with the notations of Proposition~\ref{TheoABC2018}, we introduce the parameterized convex functional~$\Phi : \R_+\times\HH^1_{\mathrm{D}}(\Omega_0,\R^2) \to \R$ defined by
\begin{equation}\label{trescaparajeneed}
    \displaystyle\fonction{\Phi}{\R_+\times\HH^1_{\mathrm{D}}(\Omega_0,\R^2)}{\R}{(t,v)}{ \Phi(t,v) := \overline{\overline{\phi}}_t(v) = \displaystyle\int_{\Gamma_{\mathrm{T}_0}}\frac{g_{t}\mathrm {J}_{\mathrm{T}_{t}}}{\left\|(\mathrm{I}+t\nabla{\theta})\tau\right\|} \Vert v_\tau \Vert. }
\end{equation}
Hence, from the equality~$ \overline{\overline{u}}_t=\mathrm{prox}_{ \Phi(t,\cdot) }(E_t)$ satisfied on the Hilbert space $\HH^{1}_{\mathrm{D}}(\Omega_0,\R^2)$ endowed with the nonparameterized scalar product~$\dual{\cdot}{\cdot}_{\HH^{1}_{\mathrm{D}}(\Omega_0,\R^2)}$, we are now in a satisfactory setting in order to apply Proposition~\ref{TheoABC2018} (if its assumptions are satisfied of course). The first step is to analyze the differentiability of the map $t\in\R_{+} \mapsto E_{t} \in \HH^{1}_{\mathrm{D}}(\Omega_0,\R^2)$ at~$t=0$. For this purpose, let us recall from~\cite{HENROT} that:
\begin{enumerate}[label={\rm (\roman*)}]
    \item the map $t\in\R_{+} \mapsto \mathrm{J}_{t}\in\LL^{\infty}(\R^{2})$ is differentiable at $t=0$ with derivative given by~$\mathrm{div}(\theta)$;\label{diff11}
    \item the map $t\in\R_{+} \mapsto \left(\mathrm{I}+t\nabla{\theta}\right)^{-1}\in\LL^{\infty}(\R^{2},\R^{2 \times 2})$ is differentiable at $t=0$ with derivative given by~$-\nabla{\theta}$;\label{diff12}
    \item the map $t\in\R_{+} \mapsto \left(\mathrm{I}+t\nabla{\theta}^{\top}\right)^{-1}\in\LL^{\infty}(\R^{2},\R^{2 \times 2})$ is differentiable at $t=0$ with derivative given by~$-\nabla{\theta}^{\top}$;\label{diff13}
    \item the map $t\in\R_{+} \mapsto \left(\mathrm{I}+t\nabla{\theta}\right)^{-1}f_{t}\mathrm{J}_{t}\in\LL^{2}(\R^{2},\R^2)$ is differentiable at $t=0$ with derivative given by~$\mathrm{div}\left(f\theta^{\top}\right)-\nabla{\theta}f$;\label{diff14}
    \item \label{item5} the map $t\in\R_{+} \mapsto \frac{g_{t}\mathrm {J}_{\mathrm{T}_{t}}}{\left\|(\mathrm{I}+t\nabla{\theta})\tau\right\|}\in\LL^{2}( \Gamma_{\mathrm{T}_0})$ is differentiable at $t=0$ with derivative given by~$p(\theta):=\nabla{g}\cdot\theta+g\left( \mathrm{div}_{\tau}(\theta)-\nabla{\theta}\tau\cdot\tau\right)$.\label{diff166} 
   
\end{enumerate}

\begin{myLem}\label{lem876}
The map $t\in\R_{+} \mapsto E_{t} \in \HH^{1}_{\mathrm{D}}(\Omega_0,\R^2)$ is differentiable at $t=0$ and its derivative, denoted by $E'_{0} \in \HH^{1}_{\mathrm{D}}(\Omega_0,\R^2)$, is the unique solution to the variational equality given by
\begin{multline}\label{Woderov}
    \dual{E'_{0}}{v}_{\HH^{1}_{\mathrm{D}}(\Omega_0,\R^{2})}=\int_{\Omega_{0}}\left(\mathrm{div}\left(f\theta^{\top}\right)-\nabla{\theta}f\right)\cdot v\\+\int_{\Omega_{0}}\left(\left(\mathrm{A}\mathrm{e}\left({u_0}\right)\right)\nabla{\theta}^{\top}+\mathrm{A}\left(\nabla{u_0}\nabla{\theta}\right)+\mathrm{A}\mathrm{e}\left(\nabla{\theta}^{\top}u_0\right)-\mathrm{div}(\theta)\mathrm{A}\mathrm{e}\left({u_0}\right)\right):\nabla{v}\\+\int_{\Omega_0}\mathrm{A}\mathrm{e}\left({u_0}\right):\mathrm{e}\left(\nabla{\theta}^{\top}v\right), \qquad\forall v\in\HH^{1}_{\mathrm{D}}(\Omega_0,\R^{2}).
\end{multline}
\end{myLem}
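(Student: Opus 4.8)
The plan is to put the variational equality defining $E_t$ into a form amenable to term‑by‑term differentiation at $t=0$, the decisive point being that the parameterized bilinear form occurring in it reduces, at $t=0$, \emph{exactly} to the scalar product $\dual{\cdot}{\cdot}_{\HH^1_{\mathrm{D}}(\Omega_0,\R^2)}$. Indeed, writing
\[
a_t(w,v):=\int_{\Omega_0}\mathrm{J}_t\,\mathrm{A}\left[\nabla\left((\mathrm{I}+t\nabla\theta^\top)^{-1}w\right)(\mathrm{I}+t\nabla\theta)^{-1}\right](\mathrm{I}+t\nabla\theta^\top)^{-1}:\nabla\left((\mathrm{I}+t\nabla\theta^\top)^{-1}v\right),
\]
the symmetry of $\mathrm{A}$ (giving $\mathrm{A}\mathrm{e}(w)=\mathrm{A}\nabla w$, a symmetric tensor, hence $\mathrm{A}\mathrm{e}(w):\nabla v=\mathrm{A}\mathrm{e}(w):\mathrm{e}(v)$) yields $a_0(w,v)=\dual{w}{v}_{\HH^1_{\mathrm{D}}(\Omega_0,\R^2)}$, so that the last two integrals in the definition of $E_t$ combine and
\[
\dual{E_t}{v}_{\HH^1_{\mathrm{D}}(\Omega_0,\R^2)}=\int_{\Omega_0}(\mathrm{I}+t\nabla\theta)^{-1}f_t\mathrm{J}_t\cdot v-\bigl(a_t-a_0\bigr)(\overline{\overline{u}}_t,v),\qquad\forall v\in\HH^1_{\mathrm{D}}(\Omega_0,\R^2).
\]
In particular $E_0=F_0$, and it remains to pass to the limit in the difference quotient of the right‑hand side tested against a fixed $v$, aiming at convergence in $\bigl(\HH^1_{\mathrm{D}}(\Omega_0,\R^2)\bigr)^\ast$ (so as to obtain strong differentiability of $t\mapsto E_t$).

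The source term is handled by item~\ref{diff14}: $\tfrac1t\bigl((\mathrm{I}+t\nabla\theta)^{-1}f_t\mathrm{J}_t-f\bigr)\to\mathrm{div}(f\theta^\top)-\nabla\theta f$ in $\LL^2(\Omega_0,\R^2)$, which produces the first term of~\eqref{Woderov}. For the bilinear term, I would split $\bigl(a_t-a_0\bigr)(\overline{\overline{u}}_t,v)=\bigl(a_t-a_0\bigr)(\overline{\overline{u}}_t-u_0,v)+\bigl(a_t-a_0\bigr)(u_0,v)$. The coefficients of $a_t$ are products of $\mathrm{J}_t$, $(\mathrm{I}+t\nabla\theta)^{-1}$, $(\mathrm{I}+t\nabla\theta^\top)^{-1}$ and of their spatial gradients (the gradients being controlled thanks to $\theta\in\mathcal{C}^{2,\infty}_{\mathrm{D}}(\R^2,\R^2)$), all in $\LL^\infty(\Omega_0)$ and $\mathcal{C}^1$ in $t$ near $0$ by items~\ref{diff11}--\ref{diff13}; hence $a_t-a_0$ has operator norm $O(t)$ on $\HH^1_{\mathrm{D}}(\Omega_0,\R^2)\times\HH^1_{\mathrm{D}}(\Omega_0,\R^2)$, and, using the strong continuity $\overline{\overline{u}}_t\to u_0$ in $\HH^1_{\mathrm{D}}(\Omega_0,\R^2)$ as $t\to0^+$ --- i.e.\ continuity, not differentiability, of the Tresca solution under the perturbation $\mathrm{id}+t\theta$, obtainable from the uniform a priori bound on $\overline{\overline{u}}_t$ together with the resulting continuity of $E_t$ and the Mosco convergence of $\overline{\overline{\phi}}_t$ to $\overline{\overline{\phi}}_0$ --- the first piece divided by $t$ tends to $0$ in the dual norm. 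The second piece divided by $t$ tends to $\dot a(u_0,\cdot):=\tfrac{\d}{\d t}\big|_{t=0}a_t(u_0,\cdot)$ by items~\ref{diff11}--\ref{diff13} (the map $t\mapsto a_t(u_0,\cdot)$ being differentiable into $\bigl(\HH^1_{\mathrm{D}}(\Omega_0,\R^2)\bigr)^\ast$, since $a_t(u_0,\cdot)$ is of the form $v\mapsto\int_{\Omega_0}P_t:\nabla v+\int_{\Omega_0}Q_t\cdot v$ with $t\mapsto(P_t,Q_t)$ differentiable into $\LL^2(\Omega_0)$). Consequently $t\mapsto E_t$ is differentiable at $0$ and $\dual{E'_0}{v}_{\HH^1_{\mathrm{D}}(\Omega_0,\R^2)}=\int_{\Omega_0}(\mathrm{div}(f\theta^\top)-\nabla\theta f)\cdot v-\dot a(u_0,v)$.

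It then remains to compute $-\dot a(u_0,v)$ by the product rule over the five $t$‑dependent factors of $a_t(u_0,v)$, evaluating the remaining factors at $t=0$ and simplifying each resulting integrand by means of $\mathrm{A}\nabla(\cdot)=\mathrm{A}\mathrm{e}(\cdot)$ and of the matrix identity $\mathrm{B}:\mathrm{C}\mathrm{D}=\mathrm{B}\mathrm{D}^\top:\mathrm{C}$. I expect: differentiating $\mathrm{J}_t$ (item~\ref{diff11}) yields the term $-\mathrm{div}(\theta)\mathrm{A}\mathrm{e}(u_0):\nabla v$; differentiating the inner factor $(\mathrm{I}+t\nabla\theta)^{-1}$ (item~\ref{diff12}) yields $\mathrm{A}(\nabla u_0\nabla\theta):\nabla v$; differentiating the outer factor $(\mathrm{I}+t\nabla\theta^\top)^{-1}$ (item~\ref{diff13}) yields $\mathrm{A}\mathrm{e}(u_0)\nabla\theta^\top:\nabla v$; differentiating the copy of $(\mathrm{I}+t\nabla\theta^\top)^{-1}$ acting on $u_0$ inside the gradient (item~\ref{diff13}; legitimate since $\nabla\colon\HH^1\to\LL^2$ commutes with $\tfrac{\d}{\d t}$) yields $\mathrm{A}\mathrm{e}(\nabla\theta^\top u_0):\nabla v$; and differentiating the copy acting on $v$ inside the gradient yields $\mathrm{A}\mathrm{e}(u_0):\mathrm{e}(\nabla\theta^\top v)$. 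Summing these five contributions with the source contribution gives exactly the right‑hand side of~\eqref{Woderov}.

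Finally, that right‑hand side is a bounded linear functional of $v\in\HH^1_{\mathrm{D}}(\Omega_0,\R^2)$ (using $\theta\in\mathcal{C}^{2,\infty}_{\mathrm{D}}(\R^2,\R^2)$, $u_0\in\HH^1_{\mathrm{D}}(\Omega_0,\R^2)$ and $f\in\HH^1(\R^2,\R^2)$), so by the Riesz representation theorem on the Hilbert space $\bigl(\HH^1_{\mathrm{D}}(\Omega_0,\R^2),\dual{\cdot}{\cdot}_{\HH^1_{\mathrm{D}}(\Omega_0,\R^2)}\bigr)$ it admits a unique representative, which is the $E'_0$ of the statement; this closes the proof. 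I expect the genuinely laborious step to be the last one: organising the five‑fold product rule, keeping track of which occurrence of $\nabla\theta$ (coming from items~\ref{diff11}--\ref{diff13}) falls on which slot, and collapsing the many resulting terms --- via the symmetry of $\mathrm{A}$ and $\mathrm{B}:\mathrm{C}\mathrm{D}=\mathrm{B}\mathrm{D}^\top:\mathrm{C}$ --- into the compact form displayed in~\eqref{Woderov}.
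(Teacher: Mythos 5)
Your proposal is correct and follows the same skeleton as the paper's proof: identify the candidate limit $E'_0$ by the Riesz representation theorem, control the difference quotient of $t\mapsto E_t$ through the differentiability of the coefficient maps \ref{diff11}--\ref{diff14}, and reduce everything to the continuity of $t\mapsto\overline{\overline{u}}_t$ at $t=0$ together with a uniform bound on $\left\|\overline{\overline{u}}_t\right\|_{\HH^{1}_{\mathrm{D}}(\Omega_0,\R^{2})}$; your five-fold product rule for $\dot a(u_0,\cdot)$ reproduces exactly the right-hand side of~\eqref{Woderov}. The one genuine divergence is how the continuity $\overline{\overline{u}}_t\to u_0$ is obtained. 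The paper proves it self-containedly: test the variational inequality for $\overline{\overline{u}}_t$ with $v=u_0$ and the one for $u_0$ with $v=\overline{\overline{u}}_t$, combine, and use coercivity, the needed uniform bound coming from the test $v=0$ in~\eqref{inequalityaagain23}. You instead propose the chain: uniform bound, hence $E_t\to E_0$, hence (via Mosco convergence of $\overline{\overline{\phi}}_t$ to $\overline{\overline{\phi}}_0$ and the stability of proximal maps under Mosco epi-convergence) $\overline{\overline{u}}_t=\mathrm{prox}_{\overline{\overline{\phi}}_t}(E_t)\to\mathrm{prox}_{\overline{\overline{\phi}}_0}(E_0)=u_0$. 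This ordering is not circular and the conclusion is true, but the route imports machinery the paper never sets up (Attouch's theorem relating Mosco epi-convergence, graph convergence of subdifferentials and pointwise convergence of resolvents), and it leaves two items only asserted: the uniform a priori bound itself, and the first-order Mosco convergence of $\overline{\overline{\phi}}_t$ (routine via the compactness of the trace $\HH^{1}(\Omega_0,\R^{2})\hookrightarrow\LL^{2}(\Gamma_0,\R^{2})$ and the $\LL^{2}(\Gamma_{\mathrm{T}_0})$-convergence of the weights in~\ref{item5}, but it must be checked). The paper's two-test argument delivers the same continuity using nothing beyond the variational inequalities already in hand, which makes it preferable here; your route is heavier but would transfer more readily to perturbations for which the direct estimate is awkward. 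Either way, to be complete you should add the $v=0$ estimate establishing the uniform bound.
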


\begin{proof}
Using the Riesz representation theorem, we denote by $Z \in\HH^{1}_{\mathrm{D}}(\Omega_0,\R^2)$ the unique solution to the above variational inequality~\eqref{Woderov}. From linearity and using differentiability results~\ref{diff11},~\ref{diff12},~\ref{diff13},~\ref{diff14}, one gets
\begin{multline*}
    \left\|\frac{E_{t}-E_{0}}{t}-Z\right\|_{\HH^{1}_{\mathrm{D}}(\Omega_0,\R^{2})}\leq \\C(\Omega_0, \mathrm{A}, \theta) \biggl(\left\|\frac{\left(\mathrm{I}+t\nabla{\theta}\right)^{-1}f_{t}\mathrm{J}_{t}-f}{t}-\left(\mathrm{div}\left(f\theta^{\top}\right)-\nabla{\theta}f\right)\right\|_{\LL^2(\R^2,\R^2)}\\+\left\|\overline{\overline{u}}_t-u_0\right\|_{\HH^{1}_{\mathrm{D}}(\Omega_0,\R^{2})}+\frac{o(t)}{t} \left\|\overline{\overline{u}}_t\right\|_{\HH^{1}_{\mathrm{D}}(\Omega_0,\R^{2})}\biggl),
\end{multline*}
for all $t > 0$ sufficiently small, where $C(\Omega_0,\mathrm{A},\theta)>0$ is a constant which depends on~$\Omega_0$, $\mathrm{A}$ and $\theta$, and where $o$ stands for the standard Bachmann–Landau notation, with $\frac{|o(t)|}{t}\rightarrow 0$ when~$t\rightarrow0^{+}$.  Therefore, to conclude the proof, we only need to prove the continuity of the map~$t\in\R_{+} \mapsto \overline{\overline{u}}_t\in \HH^{1}_{\mathrm{D}}(\Omega_0,\R^{2})$ at $t=0$. For this purpose, take~$v=u_{0}$ in the variational formulation of~$\overline{\overline{u}}_t$ and~$v=\overline{\overline{u}}_t$ in the variational formulation of $u_{0}$ to get that
\begin{multline*}
    \left\|\overline{\overline{u}}_t-u_0\right\|_{\HH^{1}_{\mathrm{D}}(\Omega_0,\R^{2})} \leq \\ C(\Omega_0,\mathrm{A},\theta)\biggl(\left\|\left(\mathrm{I}+t\nabla{\theta}^{\top}\right)f_{t}\mathrm{J}_{t}-f\right\|_{\LL^2(\R^2,\R^2)}+\left\|\frac{g_{t}\mathrm {J}_{\mathrm{T}_{t}}}{\left\|(\mathrm{I}+t\nabla{\theta})\tau\right\|}-g\right\|_{\LL^{2}( \Gamma_{\mathrm{T}_0},\R)}\\+\left\|\overline{\overline{u}}_t\right\|_{\HH^{1}_{\mathrm{D}}(\Omega_0,\R^{2})}\left(t+o(t)\right)\biggl),
\end{multline*}
for all $t\geq0$ sufficiently small. Hence, to conclude the proof, we only need to prove that the map~$t\in\R_{+} \mapsto\left\|\overline{\overline{u}}_t\right\|_{\HH^{1}_{\mathrm{D}}(\Omega_0,\R^{2})} \in\R$ is bounded for $t\geq0$ sufficiently small. For this purpose, take~$v=0$ in the variational formulation of~$\overline{\overline{u}}_t$ to get that
\begin{multline*}
\left\|\overline{\overline{u}}_t\right\|^2_{\HH^{1}_{\mathrm{D}}(\Omega_0,\R^{2})} \leq \\
C(\Omega_0,\mathrm{A},\theta)\left(\left\|\left(\mathrm{I}+t\nabla{\theta}^{\top}\right)f_{t}\mathrm{J}_{t}\right\|_{\LL^2(\R^2,\R^2)}+\left\|\frac{g_{t}\mathrm {J}_{\mathrm{T}_{t}}}{\left\|(\mathrm{I}+t\nabla{\theta})\tau\right\|}\right\|_{\LL^{2}( \Gamma_{\mathrm{T}_0},\R)}\right)\left\|\overline{\overline{u}}_t\right\|_{\HH^{1}_{\mathrm{D}}(\Omega_0,\R^{2})} \\
+C(\Omega_0,\mathrm{A},\theta)\left\|\overline{\overline{u}}_t\right\|^2_{\HH^{1}_{\mathrm{D}}(\Omega_0,\R^{2})}\left(t+o(t)\right),
\end{multline*}
for all $t\geq0$ sufficiently small. Thus one deduces
$$
\left\|\overline{\overline{u}}_t\right\|_{\HH^{1}_{\mathrm{D}}(\Omega_0,\R^{2})}\leq \frac{C(\Omega_0,\mathrm{A},\theta)\left(\left\|\left(\mathrm{I}+t\nabla{\theta}^{\top}\right)f_{t}\mathrm{J}_{t}\right\|_{\LL^2(\R^2,\R^2)}+\left\|\frac{g_{t}\mathrm {J}_{\mathrm{T}_{t}}}{\left\|(\mathrm{I}+t\nabla{\theta})\tau\right\|}\right\|_{\LL^{2}( \Gamma_{\mathrm{T}_0})}\right)}{1-C(\Omega_0,\mathrm{A},\theta)\left(t+o(t)\right)},
$$
for all $t\geq0$ sufficiently small, and using the continuity of the map $t\in\R_{+} \mapsto (\mathrm{I}+t\nabla{\theta}^{\top})f_{t}\mathrm{J}_{t}\in\LL^{2}(\R^{2},\R^2)$ (see~\ref{diff14}) and of the map $t\in\R_{+} \mapsto \frac{g_{t}\mathrm {J}_{\mathrm{T}_{t}}}{\left\|(\mathrm{I}+t\nabla{\theta})\tau\right\|}\in\LL^{2}( \Gamma_{\mathrm{T}_0})$ (see~\ref{diff166}), the proof is complete.
\end{proof}

Now the second step is to investigate the twice epi-differentiability of the parameterized convex functional~$\Phi$ defined in~\eqref{trescaparajeneed}, as we did in our previous paper~\cite{BCJDC2} from which the next two lemmas are extracted. Precisely, to derive the next two lemmas, one has to apply~\cite[Propositions~2.18 and~2.23]{BCJDC2} on the particular case given by the expression~\eqref{trescaparajeneed}. 
%\textcolor{blue}{Ces deux lemmes sont-ils vraiment extraits de~[11] ? Ou sont-ils seulement similaires à ceux de~[11] ? A-t-on vraiment étudié la twice epi-diff de~$\Phi$ ou d'une fonctionnelle similaire ?} \textcolor{red}{(Aymeric : Dans [11], on a étudié la twice épi de la fonctionnelle $\int_{\Gamma_T} h_t ||v_\tau||$, on a donc juste à appliquer les résultats de~[11] en prenant $h_t=\frac{g_{t}\mathrm {J}_{\mathrm{T}_{t}}}{\left\|(\mathrm{I}+t\nabla{\theta})\tau\right\|}$)} 
For the needs of these lemmas, and to avoid any confusion, we recall that the notation~$\partial$ stands for the notion of \textit{subdifferential} (see Appendix~\ref{appendix}). We also introduce the notation~$x_{\tau(s)} := ( x \cdot \tau (s) ) \tau(s)$ for all~$x \in \R^2$ and all~$s \in \Gamma_0$. Similarly we will use, for all~$s\in\Gamma_0$, the \textit{tangential norm map} given by~$\left\|\cdot_{\tau(s)}\right\| : x \in\R^{2} \mapsto \left\| x_{\tau(s)} \right\| = \vert x \cdot \tau (s) \vert \in\R^+$. 

\begin{myLem}[Second-order difference quotient functions of $\Phi$]\label{epidiffoffunctionG}
For all $t>0$, $u\in \HH^{1}_{\mathrm{D}}(\Omega_0,\R^{2})$ and~$v\in\partial\Phi(0,\cdot)(u)$, it holds that
\begin{equation}\label{Delta2}
      \displaystyle\Delta_{t}^{2}\Phi(u\mid v)(\varphi)=\int_{\Gamma_{\mathrm{T}_0}}\Delta_{t}^{2}G(s)(u(s)\mid\sigma_{\tau}(v)(s))(\varphi(s)) \, \mathrm{d}s,
\end{equation}
for all $\varphi\in\HH^{1}_{\mathrm{D}}(\Omega_0,\R^{2})$, where, for almost all $s\in\Gamma_{\mathrm{T}_0}$, $\Delta_{t}^{2}G(s)(u(s)|\sigma_{\tau}(v)(s))$ stands for the second-order difference quotient functions of $G(s)$ at $u(s)\in\R^{2}$ for $\sigma_{\tau}(v)(s)\in \partial G(s)(0,\cdot)(u(s))=g(s)\partial ||\mathord{\cdot}_{\tau(s)}||(u(s))$, with~$G(s)$ defined by $$ 
\fonction{G(s)}{\mathbb{R}_{+}\times\mathbb{R}^{2}}{\R}{(t,x)}{G(s)(t,x):=\dfrac{g_{t}(s)\mathrm {J}_{\mathrm{T}_{t}}(s)}{\left\|(\mathrm{I}+t\nabla{\theta}(s))\tau(s)\right\|}\left\|x_{\tau(s)}\right\|.}
$$
\end{myLem}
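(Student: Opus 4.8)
The plan is to reduce the claimed identity~\eqref{Delta2} to a pointwise (in~$s$) computation by invoking the general integral formula for second-order difference quotients of integral functionals, and then to identify the integrand as the second-order difference quotient of the finite-dimensional map~$G(s)$. First I would make the link with the abstract framework of~\cite{BCJDC2}: the functional~$\Phi$ defined in~\eqref{trescaparajeneed} is an integral functional over~$\Gamma_{\mathrm{T}_0}$ whose integrand at~$s$ is precisely~$G(s)(t,\cdot)$, evaluated along the trace map~$v \mapsto v_\tau$ (equivalently, along the componentwise norm~$\|v_{\tau(s)}\|$). Hence~$\Phi(t,v) = \int_{\Gamma_{\mathrm{T}_0}} G(s)(t, v(s))\,\mathrm{d}s$, which is exactly the form to which~\cite[Propositions~2.18 and~2.23]{BCJDC2} apply. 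These propositions state, under the relevant measurability/growth hypotheses, that the second-order difference quotient of such an integral functional is the integral of the second-order difference quotients of the integrands, and that the subdifferential at~$u$ is characterized pointwise; I would simply cite them and check that~$\Phi$ and the~$G(s)$ meet their standing assumptions (convexity of~$G(s)(0,\cdot)$, measurability in~$s$, finiteness, and the Lipschitz/boundedness controls coming from differentiability results~\ref{diff166} and from~$g \in \HH^2(\R^2,\R)$ with~$g>0$).

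The first substantive step is therefore to verify the pointwise subdifferential identity. For fixed~$s$, the map~$G(s)(0,\cdot) : x \mapsto g(s)\,\|x_{\tau(s)}\|$ is a positively-homogeneous convex function of~$x$, and a standard computation (chain rule for the subdifferential of~$x \mapsto |x\cdot\tau(s)|$ composed with the linear map~$x \mapsto x\cdot\tau(s)$, together with the scalar factor~$g(s)$) gives~$\partial G(s)(0,\cdot)(x) = g(s)\,\partial\|\mathord{\cdot}_{\tau(s)}\|(x)$, which consists of vectors collinear with~$\tau(s)$. Then, via the pointwise characterization of~$\partial\Phi(0,\cdot)$ from~\cite{BCJDC2}, an element~$v \in \partial\Phi(0,\cdot)(u)$ is identified (in~$\LL^2(\Gamma_{\mathrm{T}_0},\R^2)$, through the duality pairing on~$\HH^1_{\mathrm{D}}$) with a measurable selection~$s \mapsto \sigma_\tau(v)(s) \in g(s)\,\partial\|\mathord{\cdot}_{\tau(s)}\|(u(s))$; this is where the shear-stress notation~$\sigma_\tau(v)$ enters as the natural representative. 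The only care needed here is the bookkeeping between the~$\HH^1_{\mathrm{D}}$-inner product~\eqref{scalarusuel11}, its associated Riesz identification, and the~$\LL^2(\Gamma_{\mathrm{T}_0})$ pairing in which the integral representation of the subdifferential lives — exactly the kind of identification already carried out in~\cite{BCJDC2}.

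The second step is the difference-quotient identity itself: by definition~$\Delta_t^2\Phi(u\mid v)(\varphi) = \frac{\Phi(u+t\varphi) - \Phi(u) - t\,\langle v,\varphi\rangle}{t^2/2}$ (with~$\Phi(u)$ shorthand for~$\Phi(0,u)$, and~$\Phi(u+t\varphi)$ for~$\Phi(t, u+t\varphi)$ so that the parameter~$t$ is perturbed simultaneously — this is the parameterized notion of~\cite{8AB}). Substituting the integral form of~$\Phi$ and the pointwise form of~$v$, and using that the pairing~$\langle v,\varphi\rangle$ reduces (after the Riesz/trace identification) to~$\int_{\Gamma_{\mathrm{T}_0}} \sigma_\tau(v)(s)\cdot\varphi(s)\,\mathrm{d}s$, every term becomes an integral over~$\Gamma_{\mathrm{T}_0}$, and the integrand is precisely~$\frac{G(s)(t, u(s)+t\varphi(s)) - G(s)(0,u(s)) - t\,\sigma_\tau(v)(s)\cdot\varphi(s)}{t^2/2} = \Delta_t^2 G(s)(u(s)\mid\sigma_\tau(v)(s))(\varphi(s))$. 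This yields~\eqref{Delta2}. The interchange of the limit-free algebraic manipulation with the integral is trivial here since no limit is taken yet (that will come in the next lemma, which identifies the epi-limit); the present lemma is a genuine identity, not a limiting statement.

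The main obstacle is not any single estimate but rather the careful matching of conventions: ensuring that the trace operator~$v \mapsto v|_{\Gamma_{\mathrm{T}_0}}$ is continuous and surjective enough onto the relevant~$\LL^2$ space for the integral-functional machinery of~\cite{BCJDC2} to apply, that the subdifferential computed in the~$\HH^1_{\mathrm{D}}$-geometry corresponds to the~$\LL^2$-selection~$\sigma_\tau(v)$ with the correct factor~$g(s)$, and that the "parameterized" second-order difference quotient (perturbing~$t$ inside~$G(s)$) is the one appearing in Proposition~\ref{TheoABC2018}. Once the hypotheses of~\cite[Propositions~2.18 and~2.23]{BCJDC2} are checked for the explicit family~$\{G(s)\}_{s\in\Gamma_{\mathrm{T}_0}}$ — which is routine given~$g\in\HH^2$, $g>0$, $\theta\in\mathcal{C}^{2,\infty}_{\mathrm{D}}$, and item~\ref{diff166} — the statement follows immediately by specialization, so the proof is short.
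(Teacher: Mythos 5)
Your proposal follows essentially the same route as the paper, which proves this lemma simply by applying \cite[Propositions~2.18 and~2.23]{BCJDC2} to the particular integrand in~\eqref{trescaparajeneed}, with the identification of $v\in\partial\Phi(0,\cdot)(u)$ with its shear stress $\sigma_\tau(v)$ via the $\HH^1_{\mathrm{D}}$-scalar product~\eqref{scalarusuel11} being exactly the bookkeeping you describe. The only slip is notational: the paper's Definition~\ref{epidiffpara} uses the denominator $t^2$ and the middle term $\Psi(t,x)$ rather than your $t^2/2$ and $\Phi(0,u)$ — this does not affect the validity of the identity here since you apply the same convention on both sides, but it would matter when passing to the Mosco epi-limit in the next lemma.
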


\begin{myRem}\normalfont\label{regularityofn}
    The assumption that $\Gamma_0$ is of class $\mathcal{C}^1$ is made to ensure that $\nn\in \mathcal{C}^{0}(\Gamma,\R)$ which gives us, for all $s\in\Gamma_{\mathrm{T}_0}$ and all $x\in\R^2$, the continuity of the map $s\in\Gamma_{\mathrm{T}_0}\mapsto \left\|x_{\tau(s)}\right\|\in\mathbb{R}_{+}$. This property is used in the proof of~\cite[Proposition~2.18]{BCJDC2} (precisely, in the proof of~\cite[Lemma~2.16]{BCJDC2}).
\end{myRem}

%In the same way as in~\cite[Proposition 2.23]{BCJDC2}, one can prove the twice epi-differentiability of~$G(s)$ for almost all $s\in\Gamma_{\mathrm{T}_0}$. \textcolor{blue}{Idem, il faut clarifier. Déjà fait dans~[11] ou similaire?} \textcolor{red}{Aymeric : Idem qu'avant, il suffit de prendre $h_t(s)=\dfrac{g_{t}(s)\mathrm {J}_{\mathrm{T}_{t}}(s)}{\left\|(\mathrm{I}+t\nabla{\theta}(s))\tau(s)\right\|}.}$ dans la Proposition 2.23}

\begin{myLem}[Second-order epi-derivative of $G(s)$]\label{épidiffgabs}
Assume that, for almost all $s\in\Gamma_{\mathrm{T}_0}$, $g$ has a directional derivative at $s$ in any direction. Then, for almost all $s\in\Gamma_{\mathrm{T}_0}$, the map~$G(s)$ is twice epi-differentiable at any~$x\in\mathbb{R}^{2}$ for all $y\in 
\partial G(s)(0,\cdot)(u(s))=g(s)\partial{||\mathord{\cdot}_{\tau(s)}||}(x)$ with
\begin{equation*}
\mathrm{D}_{e}^{2}G(s)(x \mid y)(z):=
\left\{
\begin{array}{lcll}
p(\theta)(s)\frac{x_{\tau(s)}}{\left\|x_{\tau(s)}\right\|}\cdot z	&   & \text{ if } x_{\tau(s)}\neq0 , \\
\iota_{\mathrm{N}_{\overline{\mathrm{B}(0,1)}\cap\left(\R\nn(s)\right)^{\perp}}(\frac{y}{g(s)})}(z)+p(\theta)(s)\frac{y}{g(s)}\cdot z	&   & \text{ if } x_{\tau(s)}=0,
\end{array}
\right.
\end{equation*}
for all $z\in\mathbb{R}^{2}$, where $p(\theta)\in\LL^{2}( \Gamma_{\mathrm{T}_0})$ is defined as the derivative at $t=0$ of the map $t\in\R_{+} \mapsto \frac{g_{t}\mathrm {J}_{\mathrm{T}_{t}}}{\left\|(\mathrm{I}+t\nabla{\theta})\tau\right\|}\in\LL^{2}( \Gamma_{\mathrm{T}_0})$ (see~\ref{item5}), $\mathrm{N}_{\overline{\mathrm{B}(0,1)}\cap\left(\R\nn(s)\right)^{\perp}}(\frac{y}{g(s)})$ is the normal cone to $\overline{\mathrm{B}(0,1)}\cap\left(\R\nn(s)\right)^{\perp}$ at~$\frac{y}{g(s)}$ given by \begin{equation*}
\mathrm{N}_{\overline{\mathrm{B}(0,1)}\cap\left(\R\nn(s)\right)^{\perp}}\left(\frac{y}{g(s)}\right)=
\left\{
\begin{array}{lcll}
\R\nn(s)	&   & \text{ if } \frac{y}{g(s)}\in\mathrm{B}(0,1)\cap\left(\R\nn(s)\right)^{\perp}, \\
\R\nn(s)+\R_{+}\frac{y}{g(s)}	&   & \text{ if } \frac{y}{g(s)}\in\mathrm{bd}{\left(\mathrm{B}(0,1)\right)}\cap\left(\R\nn(s)\right)^{\perp},
\end{array}
\right.
\end{equation*}
and $\iota_{\mathrm{N}_{\overline{\mathrm{B}(0,1)}\cap\left(\R\nn(s)\right)^{\perp}}(\frac{y}{g(s)})}$ stands for the indicator function of $\mathrm{N}_{\overline{\mathrm{B}(0,1)}\cap\left(\R\nn(s)\right)^{\perp}}(\frac{y}{g(s)})$ which is defined by~$\iota_{\mathrm{N}_{\overline{\mathrm{B}(0,1)}\cap\left(\R\nn(s)\right)^{\perp}}(\frac{y}{g(s)})}(z):=0$ if $z\in\mathrm{N}_{\overline{\mathrm{B}(0,1)}\cap\left(\R\nn(s)\right)^{\perp}}(\frac{y}{g(s)})$, and~$\iota_{\mathrm{N}_{\overline{\mathrm{B}(0,1)}\cap\left(\R\nn(s)\right)^{\perp}}(\frac{y}{g(s)})}(z):=+\infty$ otherwise.
\end{myLem}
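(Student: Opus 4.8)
The plan is to reduce the statement to a purely finite-dimensional, pointwise (in $s \in \Gamma_{\mathrm{T}_0}$) computation of the second-order epi-derivative of the convex function $G(s) : \mathbb{R}_+ \times \mathbb{R}^2 \to \mathbb{R}$, $(t,x) \mapsto c_t(s)\, \|x_{\tau(s)}\|$, where $c_t(s) := g_t(s)\mathrm{J}_{\mathrm{T}_t}(s)/\|(\mathrm{I}+t\nabla\theta(s))\tau(s)\|$. The key structural observation is that $G(s)$ is the composition of the one-dimensional convex function $|\cdot| : \mathbb{R} \to \mathbb{R}_+$ with the linear map $x \mapsto x\cdot\tau(s)$, scaled by the parameter-dependent factor $c_t(s)$; equivalently $\|x_{\tau(s)}\| = |x\cdot\tau(s)|$. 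So the first step is to recall the second-order difference quotient $\Delta_t^2 G(s)(x\mid y)(z) = \frac{2}{t^2}\bigl( G(s)(t, x+tz) - G(s)(0,x) - t\, y\cdot z \bigr)$ and to split $c_t(s) = c_0(s) + t\, p(\theta)(s) + o(t)$ using item \ref{item5}, noting $c_0(s) = g(s)$. The cross term involving $p(\theta)(s)$ will produce the linear contributions $p(\theta)(s)\frac{x_{\tau(s)}}{\|x_{\tau(s)}\|}\cdot z$ (when $x_{\tau(s)}\neq 0$) and $p(\theta)(s)\frac{y}{g(s)}\cdot z$ (when $x_{\tau(s)}=0$, using that $y/g(s) \in \partial\|\cdot_{\tau(s)}\|(x) $ and $x_{\tau(s)}=0$ forces $\|y/g(s)\|\le 1$, $y/g(s)\in(\mathbb{R}\nn(s))^\perp$, so $y\cdot z = (y/g(s))\cdot z \cdot g(s)$ tracks the subgradient direction). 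The remaining term, $\frac{2}{t^2}g(s)\bigl(\|x_{\tau(s)} + t z_{\tau(s)}\| - \|x_{\tau(s)}\|\bigr) - \frac{2}{t} y\cdot z$, is exactly (up to the subgradient normalization) the difference quotient of the one-dimensional absolute value, which is classically known to epi-converge: it is $0$ on the appropriate affine/normal-cone set and $+\infty$ elsewhere.

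Concretely, the second step is to invoke \cite[Propositions~2.18 and~2.23]{BCJDC2} — as the statement already indicates — which handle precisely this composite structure and reduce the twice epi-differentiability of the full functional to the pointwise twice epi-differentiability of the integrand, and which give the formula for $\mathrm{D}_e^2 G(s)$ in terms of the one-dimensional model function. The work here is to check that the hypotheses of those propositions are met: the assumption that $g$ has a directional derivative at almost every $s$ in every direction is exactly what makes $t\mapsto c_t(s)$ (and hence $t\mapsto G(s)(t,\cdot)$) differentiable at $t=0$ with derivative $p(\theta)(s)$ as recalled in item \ref{item5}; and the $\mathcal{C}^1$-regularity of $\Gamma_0$ guarantees the continuity of $s\mapsto \tau(s)$, hence of $s \mapsto \|x_{\tau(s)}\|$, which is used (cf. Remark~\ref{regularityofn}) to get the required measurability/continuity for the integral representation. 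One then transports the known one-dimensional second-order epi-derivative of $|\cdot|$ (namely $\mathrm{D}_e^2|\cdot|(a\mid b)(h) = 0$ if $a\neq0$, and $= \iota_{\mathrm{N}_{[-1,1]}(b)}(h)$ if $a=0$, with $b\in\partial|a|$) through the linear map $x\mapsto x\cdot\tau(s)$: the set $[-1,1]$ pulls back to $\overline{\mathrm{B}(0,1)}\cap(\mathbb{R}\nn(s))^\perp$ and the normal cone $\mathrm{N}_{[-1,1]}(b)$ pulls back to $\mathrm{N}_{\overline{\mathrm{B}(0,1)}\cap(\mathbb{R}\nn(s))^\perp}(y/g(s))$, giving the two cases displayed in the statement.

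The third step is the explicit identification of the normal cone $\mathrm{N}_{\overline{\mathrm{B}(0,1)}\cap(\mathbb{R}\nn(s))^\perp}(y/g(s))$. Since $\overline{\mathrm{B}(0,1)}\cap(\mathbb{R}\nn(s))^\perp$ is a closed segment in the line $(\mathbb{R}\nn(s))^\perp = \mathbb{R}\tau(s)$, its normal cone at an interior point ($|y/g(s)|<1$) is the orthogonal complement $\mathbb{R}\nn(s)$, and at an endpoint ($|y/g(s)|=1$, i.e. $y/g(s)\in\mathrm{bd}(\mathrm{B}(0,1))\cap(\mathbb{R}\nn(s))^\perp$) it is the sum of $\mathbb{R}\nn(s)$ (normal to the affine hull) and the outward ray $\mathbb{R}_+ (y/g(s))$ (normal to the segment within its line) — this is a routine computation with normal cones of intersections, using that the two convex sets (the ball and the subspace) have relatively-interior-intersecting constraint qualification. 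Assembling the parameter-derivative term with the one-dimensional epi-derivative term then yields exactly the claimed formula for $\mathrm{D}_e^2 G(s)(x\mid y)(z)$.

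I expect the main obstacle to be not any single deep argument but the careful verification that the general machinery of \cite[Propositions~2.18 and~2.23]{BCJDC2} applies verbatim to the specific integrand \eqref{trescaparajeneed}: one must match the abstract hypotheses (differentiability in $t$, continuity in $s$, the composite-with-linear-map structure, and the integrability conditions needed to pass from the pointwise second-order epi-derivative of $G(s)$ to the second-order epi-derivative of the integral functional $\Phi$) against what is actually available here, and in particular to confirm that the almost-everywhere directional differentiability of $g$ is the precise minimal hypothesis that makes the $p(\theta)$ term well-defined in $\mathrm{L}^2(\Gamma_{\mathrm{T}_0})$. The genuinely new content over the scalar case of \cite{ABCJ} is the appearance of the vectorial normal cone $\mathrm{N}_{\overline{\mathrm{B}(0,1)}\cap(\mathbb{R}\nn(s))^\perp}$ in place of a scalar sign condition, so the bookkeeping of how the linear map $x\mapsto x_{\tau(s)}$ interacts with subdifferentials and normal cones must be done with care, but it is conceptually routine once the composite structure is exploited.
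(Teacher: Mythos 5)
Your approach matches the paper's: the paper gives no self-contained proof of this lemma, but simply applies \cite[Propositions~2.18 and~2.23]{BCJDC2} to the integrand of~\eqref{trescaparajeneed}, and your pointwise computation (reduction to $\vert x\cdot\tau(s)\vert$, first-order expansion of the coefficient $c_t(s)=g_t(s)\mathrm{J}_{\mathrm{T}_t}(s)/\Vert(\mathrm{I}+t\nabla\theta(s))\tau(s)\Vert$ via item~(v), the one-dimensional second-order epi-derivative of the absolute value pulled back through $x\mapsto x\cdot\tau(s)$, and the normal cone of the segment $\overline{\mathrm{B}(0,1)}\cap(\R\nn(s))^{\perp}$) is exactly the content of those propositions specialized to this case. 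One correction: the second-order difference quotient you display is not the one of Definition~\ref{epidiffpara} --- there is no factor $2$, and the subtracted term must be $G(s)(t,x)$, not $G(s)(0,x)$. With $G(s)(0,x)$ the quotient contains the term $(c_t(s)-g(s))\Vert x_{\tau(s)}\Vert/t^{2}\sim p(\theta)(s)\Vert x_{\tau(s)}\Vert/t$, which diverges when $x_{\tau(s)}\neq 0$ and $p(\theta)(s)\neq 0$, so the finite limit you claim in that case only comes out with the correct quotient. Since the remainder of your computation (the two cases, the cross term producing $p(\theta)(s)$, and the normal-cone identification) is consistent with the correct definition, this is a transcription slip rather than a conceptual gap.
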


We are now in a position to prove, under appropriate assumptions, that the map $t\in\R_{+} \mapsto \overline{\overline{u}}_t \in \HH^{1}_{\mathrm{D}}(\Omega_0,\R^2)$ is differentiable at $t=0$.

\begin{myProp}[The derivative~$\overline{\overline{u}}'_0$]\label{deriv11}
Assume that:
\begin{enumerate}[label={\rm{(H\arabic*)}}]
    \item\label{hypo1} for almost all $s\in\Gamma_{\mathrm{T}_0}$, $g$ has a directional derivative at $s$ in any direction.
    \item\label{hypo3} the parameterized convex functional $\Phi$ defined in~\eqref{trescaparajeneed} is twice epi-differentiable (see Definition~\ref{epidiffpara}) at $u_{0}$ for $E_{0}-u_{0}\in\partial \Phi(0,\cdot)(u_{0})$, with
\begin{equation}\label{hypoth1}
\displaystyle\mathrm{D}_{e}^{2}\Phi(u_{0}\mid E_{0}-u_{0})(\varphi)=\int_{\Gamma_{\mathrm{T}_0}}\mathrm{D}_{e}^{2}G(s)(u_{0}(s)\mid\sigma_{\tau}(E_{0}-u_{0})(s))(\varphi(s)) \, \mathrm{d}s,
\end{equation}    
for all $\varphi\in \HH^{1}_{\mathrm{D}}(\Omega_0,\R^2)$.
\end{enumerate}
Then the map
$t\in\R_{+} \mapsto \overline{\overline{u}}_t \in \HH^{1}_{\mathrm{D}}(\Omega_0,\R^2)$ is differentiable at $t=0$ and its derivative~$\overline{\overline{u}}'_0 \in \mathcal{K}_0$ is the unique solution to the variational inequality
\begin{multline*}\label{deriveavantmat}
\dual{\overline{\overline{u}}'_{0}}{\varphi-\overline{\overline{u}}'_{0}}_{\HH^{1}_{\mathrm{D}}(\Omega_0,\R^{2})}\geq-\int_{\Omega_0}\mathrm{div}\left(\mathrm{div}\left(\mathrm{A}\mathrm{e}(u_0)\right)\theta^{\top}\right)\cdot\left(\varphi-\overline{\overline{u}}'_0\right)\\+\int_{\Omega_0}\left(\left(\mathrm{A}\mathrm{e}\left({u_0}\right)\right)\nabla{\theta}^{\top}+\mathrm{A}\left(\nabla{u_0}\nabla{\theta}\right)+\mathrm{A}\mathrm{e}{\left(\nabla{\theta}^{\top}u_0\right)}-\mathrm{div}(\theta)\mathrm{A}\mathrm{e}\left({u_0}\right)\right):\nabla{\left(\varphi-\overline{\overline{u}}'_{0}\right)}\\+\dual{\mathrm{A}\mathrm{e}(u_0)\nn}{\nabla{\theta}^{\top}\left(\varphi-\overline{\overline{u}}'_{0}\right)}_{\HH^{-1/2}(\Gamma_0,\R^2)\times\HH^{1/2}(\Gamma_0,\R^2)}-\int_{\Gamma_{\mathrm{T_0}^{u_0,g}_{\mathrm{N}}}}p(\theta)\frac{u_{0_\tau}}{\left\|u_{0_\tau}\right\|}\cdot \left(\varphi_\tau-\overline{\overline{u}}'_{0_\tau}\right)\\+\int_{\Gamma_{\mathrm{T_0}^{u_0,g}_{\mathrm{S}}}}p(\theta)\frac{\sigma_{\tau}(u_0)}{g}\cdot \left(\varphi_\tau-\overline{\overline{u}}'_{0_\tau}\right), \qquad \forall \varphi \in \mathcal{K}_0,
\end{multline*}
%\mathcal{K}_{u_{0},\frac{\sigma_{\tau}(E_0-u_{0})}{g}}
where~$\mathcal{K}_0$ is the nonempty closed convex subset of $\HH^{1}_{\mathrm{D}}(\Omega_0,\R^2)$ given by
\begin{equation}\label{eqK0}
    \mathcal{K}_0 = \left\{ \varphi\in \HH^{1}_{\mathrm{D}}(\Omega_0,\R^2)\mid \varphi_\tau=0 \text{ \textit{a.e.}\ on } \Gamma_{\mathrm{T_0}^{u_0,g}_{\mathrm{D}}} \text{ and } \varphi_\tau\in\R_{-}\frac{\sigma_{\tau}(u_0)}{g} \text{ \textit{a.e.}\ on } \Gamma_{\mathrm{T_0}^{u_0,g}_{\mathrm{S}}} \right\},
\end{equation}
%\textcolor{blue}{Pourquoi prendre une notation aussi lourde pour~$\mathcal{K}$ ? Pourquoi ne pas simplement prendre~$\mathcal{K}_0$ par exemple ?}\textcolor{red}{Aymeric : dans nos précèdents papiers on a écrit $\mathcal{K}_{u_{0},\frac{\partial_{\nn}\left(E_{0}-u_{0}\right)}{g}}$ (pour le cas scalaire) et $\mathcal{K}_{u_{0},\frac{\sigma_{\tau}(F_{0}-u_{0})}{g_{0}}}$ dans [11]. Cela permettait de rester cohérent avec nos précèdents papiers.}
and where $\Gamma_{\mathrm{T_0}}$ is decomposed, up to a null set, as $\Gamma_{\mathrm{T_0}^{u_0,g}_{\mathrm{N}}}\cup\Gamma_{\mathrm{T_0}^{u_0,g}_{\mathrm{D}}}\cup\Gamma_{\mathrm{T_0}^{u_0,g}_{\mathrm{S}}}$ with
$$
\begin{array}{l}
\Gamma_{\mathrm{T_0}^{u_0,g}_{\mathrm{N}}}:=\left\{s\in\Gamma_{\mathrm{T_0}} \mid  u_{0_\tau}(s)\neq0\right \}, \\
\Gamma_{\mathrm{T_0}^{u_0,g}_{\mathrm{D}}}:=\left\{s\in\Gamma_{\mathrm{T_0}} \mid  u_{0_\tau}(s)=0 \text{ and } \frac{\sigma_{\tau}(u_0)(s)}{g(s)}\in\mathrm{B}(0,1)\cap\left(\R\nn(s)\right)^{\perp}\right\}, \\
\Gamma_{\mathrm{T_0}^{u_0,g}_{\mathrm{S}}}:=\left\{s\in\Gamma_{\mathrm{T_0}} \mid  u_{0_\tau}(s)=0 \text{ and } \frac{\sigma_{\tau}(u_0)(s)}{g(s)}\in \mathrm{bd} ( {\mathrm{B}(0,1)} )\cap\left(\R\nn(s)\right)^{\perp}\right\}.
\end{array}
$$
\end{myProp}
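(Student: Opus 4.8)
The plan is to apply Proposition~\ref{TheoABC2018} (extracted from~\cite[Theorem~4.15]{8AB}) to the equality~$\overline{\overline{u}}_t = \mathrm{prox}_{\Phi(t,\cdot)}(E_t)$, considered on the Hilbert space $\HH^1_{\mathrm{D}}(\Omega_0,\R^2)$ endowed with the $t$-independent scalar product $\dual{\cdot}{\cdot}_{\HH^1_{\mathrm{D}}(\Omega_0,\R^2)}$. To invoke that result, one needs two ingredients: the differentiability of $t \mapsto E_t$ at $t=0$, and the twice epi-differentiability of the parameterized convex functional $\Phi$ at $u_0$ for $E_0 - u_0 \in \partial\Phi(0,\cdot)(u_0)$. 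The first is exactly Lemma~\ref{lem876}, which gives $E'_0$ as the unique solution to the variational equality~\eqref{Woderov}. The second is furnished by assumption~\ref{hypo3}, together with Lemmas~\ref{epidiffoffunctionG} and~\ref{épidiffgabs} (whose hypothesis on $g$ is granted by~\ref{hypo1}) which identify the second-order epi-derivative as $\mathrm{D}_e^2\Phi(u_0 \mid E_0 - u_0)(\varphi) = \int_{\Gamma_{\mathrm{T}_0}} \mathrm{D}_e^2 G(s)(u_0(s) \mid \sigma_\tau(E_0 - u_0)(s))(\varphi(s))\,\mathrm{d}s$. Proposition~\ref{TheoABC2018} then yields directly that $t \mapsto \overline{\overline{u}}_t$ is differentiable at $t=0$ and that $\overline{\overline{u}}'_0$ is the unique solution to the variational inequality
\begin{equation*}
\dual{\overline{\overline{u}}'_0}{\varphi - \overline{\overline{u}}'_0}_{\HH^1_{\mathrm{D}}(\Omega_0,\R^2)} + \mathrm{D}_e^2\Phi(u_0 \mid E_0 - u_0)(\varphi) - \mathrm{D}_e^2\Phi(u_0 \mid E_0 - u_0)(\overline{\overline{u}}'_0) \geq \dual{E'_0}{\varphi - \overline{\overline{u}}'_0}_{\HH^1_{\mathrm{D}}(\Omega_0,\R^2)},
\end{equation*}
for all $\varphi$ in the (closed convex) domain of $\mathrm{D}_e^2\Phi(u_0 \mid E_0 - u_0)$.

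The remaining work is to translate this abstract inequality into the concrete form stated in the proposition. First I would identify the effective domain of $\mathrm{D}_e^2\Phi(u_0 \mid E_0-u_0)$: using Lemma~\ref{épidiffgabs}, $\mathrm{D}_e^2 G(s)(u_0(s) \mid \cdot)(z)$ is finite precisely when $z$ lies in $\R^2$ on $\Gamma_{\mathrm{T_0}^{u_0,g}_{\mathrm{N}}}$, when $z_{\tau(s)} = 0$ (equivalently $z \cdot \tau(s) = 0$) on $\Gamma_{\mathrm{T_0}^{u_0,g}_{\mathrm{D}}}$, and when $z$ belongs to the polar of the normal cone, i.e.\ $z_{\tau(s)} \in \R_- \frac{\sigma_\tau(u_0)(s)}{g(s)}$, on $\Gamma_{\mathrm{T_0}^{u_0,g}_{\mathrm{S}}}$; this is exactly the set $\mathcal{K}_0$ of~\eqref{eqK0}. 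Here one uses that $\sigma_\tau(E_0 - u_0)(s) = g(s)\frac{u_{0_\tau}(s)}{\|u_{0_\tau}(s)\|}$ on $\Gamma_{\mathrm{T_0}^{u_0,g}_{\mathrm{N}}}$ from the saturation condition in~\eqref{Trescaproblem2222}, and on $\Gamma_{\mathrm{T_0}^{u_0,g}_{\mathrm{D}}} \cup \Gamma_{\mathrm{T_0}^{u_0,g}_{\mathrm{S}}}$ that $\frac{\sigma_\tau(E_0 - u_0)(s)}{g(s)}$ lies in $\overline{\mathrm{B}(0,1)} \cap (\R\nn(s))^\perp$ with the dichotomy open ball versus boundary defining the partition — which requires recalling the characterization $u_0 = \mathrm{prox}_{\phi_{\Omega_0}}(F_{\Omega_0})$ and the fact that $F_{\Omega_0}$ and $E_0$ play interchangeable roles here (both differing from $u_0$ by an element of $\partial\phi_{\Omega_0}(u_0) = \partial\Phi(0,\cdot)(u_0)$). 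Then I would substitute the explicit value of $\mathrm{D}_e^2 G(s)$ on each piece: on $\Gamma_{\mathrm{T_0}^{u_0,g}_{\mathrm{N}}}$ it contributes $p(\theta)(s)\frac{u_{0_\tau}(s)}{\|u_{0_\tau}(s)\|}\cdot(\varphi - \overline{\overline{u}}'_0)(s)$; on $\Gamma_{\mathrm{T_0}^{u_0,g}_{\mathrm{S}}}$ the indicator term vanishes on $\mathcal{K}_0$ and the linear part gives $p(\theta)(s)\frac{\sigma_\tau(u_0)(s)}{g(s)} \cdot (\varphi - \overline{\overline{u}}'_0)(s)$ (with an appropriate sign coming from $y/g = \sigma_\tau(E_0-u_0)/g$ which on $\Gamma_{\mathrm{T_0}^{u_0,g}_{\mathrm{S}}}$ is collinear with $\sigma_\tau(u_0)/g$); on $\Gamma_{\mathrm{T_0}^{u_0,g}_{\mathrm{D}}}$ there is no contribution.

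Finally I would rewrite the right-hand side $\dual{E'_0}{\varphi - \overline{\overline{u}}'_0}_{\HH^1_{\mathrm{D}}(\Omega_0,\R^2)}$ using the variational equality~\eqref{Woderov} from Lemma~\ref{lem876}, producing the three volume integrals there. The term $\int_{\Omega_0}(\mathrm{div}(f\theta^\top) - \nabla\theta f)\cdot(\varphi - \overline{\overline{u}}'_0)$ should be combined with the strong form $-\mathrm{div}(\mathrm{A}\mathrm{e}(u_0)) = f$ in $\Omega_0$: integrating by parts (using $\mathrm{A}\mathrm{e}(u_0)\nn \in \LL^2$ is not available without extra regularity, so one keeps the duality bracket $\dual{\mathrm{A}\mathrm{e}(u_0)\nn}{\cdot}_{\HH^{-1/2}(\Gamma_0)\times\HH^{1/2}(\Gamma_0)}$) one rewrites $\int_{\Omega_0}\mathrm{div}(f\theta^\top)\cdot w = -\int_{\Omega_0}\mathrm{div}(\mathrm{div}(\mathrm{A}\mathrm{e}(u_0))\theta^\top)\cdot w$ and the term $\int_{\Omega_0}\mathrm{A}\mathrm{e}(u_0):\mathrm{e}(\nabla\theta^\top w)$ is converted, via an integration by parts and the symmetry $\mathrm{A}\mathrm{e}(u_0):\nabla w = \mathrm{A}\mathrm{e}(u_0):\mathrm{e}(w)$, into the boundary duality term $\dual{\mathrm{A}\mathrm{e}(u_0)\nn}{\nabla\theta^\top w}_{\HH^{-1/2}(\Gamma_0,\R^2)\times\HH^{1/2}(\Gamma_0,\R^2)}$ together with a volume remainder that, after using $-\mathrm{div}(\mathrm{A}\mathrm{e}(u_0)) = f$ again, cancels against part of the first term; what survives is precisely the combination displayed in the statement. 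The uniqueness of $\overline{\overline{u}}'_0$ in $\mathcal{K}_0$ follows from strict monotonicity of $\varphi \mapsto \dual{\varphi}{\cdot}_{\HH^1_{\mathrm{D}}(\Omega_0,\R^2)}$ plus convexity of $\mathrm{D}_e^2\Phi(u_0 \mid E_0 - u_0)$ on the closed convex set $\mathcal{K}_0$, which is the standard Lions–Stampacchia argument and is already built into Proposition~\ref{TheoABC2018}. The main obstacle I anticipate is the bookkeeping in this last paragraph: correctly handling the boundary duality pairings without assuming $\mathrm{A}\mathrm{e}(u_0)\nn \in \LL^2(\Gamma_0,\R^2)$, tracking the several integration-by-parts cancellations, and getting all signs right on the Signorini piece $\Gamma_{\mathrm{T_0}^{u_0,g}_{\mathrm{S}}}$ where the collinearity of $\sigma_\tau(E_0 - u_0)$ with $\sigma_\tau(u_0)$ and the orientation of the normal cone must be reconciled.
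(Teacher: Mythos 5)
Your proposal is correct and follows essentially the same route as the paper: Lemma~\ref{lem876} for $E'_0$, Hypotheses~\ref{hypo1}--\ref{hypo3} with Lemmas~\ref{epidiffoffunctionG} and~\ref{épidiffgabs} to identify $\mathrm{D}_{e}^{2}\Phi(u_0\mid E_0-u_0)$ and its domain $\mathcal{K}_0$ (using $\sigma_\tau(E_0)=0$ since $E_0=F_{\Omega_0}$), Proposition~\ref{TheoABC2018} for the proto-differentiation of the prox, and then the substitution of~\eqref{Woderov} with $-\mathrm{div}(\mathrm{A}\mathrm{e}(u_0))=f$ and the divergence formula. One small slip: the set on $\Gamma_{\mathrm{T_0}^{u_0,g}_{\mathrm{S}}}$ is the normal cone itself (the indicator in Lemma~\ref{épidiffgabs} is $\iota_{\mathrm{N}(\cdot)}$), not its polar, though the explicit condition $\varphi_\tau\in\R_{-}\frac{\sigma_\tau(u_0)}{g}$ you write is the correct one.
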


\begin{proof}
From Hypotheses~\ref{hypo1},~\ref{hypo3} and Lemma~\ref{épidiffgabs}, it follows that
\begin{multline*}
\displaystyle \mathrm{D}_{e}^{2}\Phi(u_{0}\mid E_{0}-u_{0})(\varphi)=\int_{\Gamma_{\mathrm{T_0}^{u_0,g}_{\mathrm{N}}}}p(\theta)\frac{u_{0_\tau}}{\left\|u_{0_\tau}\right\|}\cdot \varphi_\tau+\int_{\Gamma_{\mathrm{T_0}}\backslash\Gamma_{\mathrm{T_0}^{u_0,g}_{\mathrm{N}}}}p(\theta)\frac{\sigma_{\tau}\left(E_0-u_0\right)}{g}\cdot \varphi_\tau\\+\int_{\Gamma_{\mathrm{T_0}}\backslash\Gamma_{\mathrm{T_0}^{u_0,g}_{\mathrm{N}}}}\iota_{\mathrm{N}_{\overline{\mathrm{B}(0,1)}\cap\left(\R\nn(s)\right)^{\perp}}(\frac{\sigma_{\tau}\left(E_0-u_0\right)(s)}{g(s)})}(\varphi(s)) \, \mathrm{d}s,
\end{multline*}
which can be rewritten as
\begin{equation*}
\displaystyle \mathrm{D}_{e}^{2}\Phi(u_{0}\mid E_{0}-u_{0})(\varphi)=\int_{\Gamma_{\mathrm{T_0}^{u_0,g}_{\mathrm{N}}}}p(\theta)\frac{u_{0_\tau}}{\left\|u_{0_\tau}\right\|}\cdot \varphi_\tau+\int_{\Gamma_{\mathrm{T_0}}\backslash\Gamma_{\mathrm{T_0}^{u_0,g}_{\mathrm{N}}}}p(\theta)\frac{\sigma_{\tau}\left(E_0-u_0\right)}{g}\cdot \varphi_\tau+\iota_{\mathcal{K}_0}(\varphi),
\end{equation*}
for all~$\varphi\in\HH^{1}_{\mathrm{D}}(\Omega_0,\R^2)$, where $\mathcal{K}_0$ is the nonempty closed convex subset of $\HH^{1}_{\mathrm{D}}(\Omega_0,\R^2)$ defined by
\begin{multline*}
\mathcal{K}_0:=\biggl\{ \varphi\in \HH^{1}_{\mathrm{D}}(\Omega_0,\R^2)\mid \varphi(s)\in \mathrm{N}_{\overline{\mathrm{B}(0,1)}\cap\left(\R\nn(s)\right)^{\perp}}\left(\frac{\sigma_{\tau}\left(E_0-u_0\right)(s)}{g(s)}\right) \\ \text{ for almost all }s\in\Gamma_{\mathrm{T_0}}\backslash\Gamma_{\mathrm{T_0}^{u_0,g}_{\mathrm{N}}} \biggl\}.
\end{multline*}
Moreover, since $E_0=F_{\Omega_0}\in\HH^{1}_{\mathrm{D}}(\Omega_0,\R^2)$ is solution to the Dirichlet-Neumann problem~\eqref{besoinfortheend} with~$\Omega=\Omega_0$, then~$\sigma_{\tau}(E_0)=0$ \textit{a.e.}\ on $\Gamma_{\mathrm{T_0}}$. Thus it follows that~$\mathcal{K}_0$ is given by~\eqref{eqK0}. Now, since~$\mathrm{D}_{e}^{2}\Phi(u_{0}|E_{0}-u_{0})$ is a proper function on $\HH^{1}_{\mathrm{D}}(\Omega_0,\R^2)$ and the map $t\in\R_{+} \mapsto E_{t} \in \HH^{1}_{\mathrm{D}}(\Omega_0,\R^2)$ is differentiable at $t=0$ with its derivative~$E'_0~\in\HH^{1}_{\mathrm{D}}(\Omega_0,\R^{2})$ being solution to the variational inequality~\eqref{Woderov}, we apply Proposition~\ref{TheoABC2018} to deduce that the map $t\in\mathbb{R}_{+}\mapsto \overline{\overline{u}}_{t}\in \HH^{1}_{\mathrm{D}}(\Omega_0,\R^2)$ is differentiable at $t=0$, and its derivative $\overline{\overline{u}}_{0}'\in\HH^{1}_{\mathrm{D}}(\Omega_0,\R^2)$ satisfies
$$
\displaystyle \overline{\overline{u}}_{0}'=\mathrm{prox}_{\mathrm{D}_{e}^{2}\Phi(u_{0}\mid E_{0}-u_{0})}(E_{0}'),
$$
which, from the definition of the proximal operator (see Definition~\ref{proxi}), leads to
$$
\displaystyle \dual{ E_{0}'-\overline{\overline{u}}'_{0}}{\varphi-\overline{\overline{u}}_{0}'}_{\HH^{1}_{\mathrm{D}}(\Omega_0,\R^2)}\leq \mathrm{D}_{e}^{2}\Phi(u_{0}\mid E_{0}-u_{0})(\varphi) -\mathrm{D}_{e}^{2}\Phi(u_{0}\mid E_{0}-u_{0})(\overline{\overline{u}}_{0}'), 
$$
for all $\varphi\in \HH^{1}_{\mathrm{D}}(\Omega,\R^2)$. Hence we get that
\begin{multline*}
    \dual{E_{0}'-\overline{\overline{u}}'_{0}}{\varphi-\overline{\overline{u}}_{0}'}_{\HH^{1}_{\mathrm{D}}(\Omega_0,\R^2)} \leq +\iota_{\mathcal{K}_0}(\varphi)-\iota_{\mathcal{K}_0}(\overline{\overline{u}}'_0)\\+\int_{\Gamma_{\mathrm{T_0}^{u_0,g}_{\mathrm{N}}}}p(\theta)\frac{u_{0_\tau}}{\left\|u_{0_\tau}\right\|}\cdot \left(\varphi_\tau-\overline{\overline{u}}'_{0_\tau}\right)+\int_{\Gamma_{\mathrm{T_0}}\backslash\Gamma_{\mathrm{T_0}^{u_0,g}_{\mathrm{N}}}}p(\theta)\frac{\sigma_{\tau}\left(E_0-u_0\right)}{g_{0}}\cdot \left(\varphi_\tau-\overline{\overline{u}}'_{0_\tau}\right),
\end{multline*}
for all $\varphi\in\HH^{1}_{\mathrm{D}}(\Omega,\R^2)$. Since~$\varphi_\tau=0$ \textit{a.e.}\ on $\Gamma_{\mathrm{T_0}^{u_0,g}_{\mathrm{D}}}$ for all $\varphi\in\mathcal{K}_0$, one deduces that~$\overline{\overline{u}}'_0\in\mathcal{K}_0$ satisfies
\begin{multline*}
 \dual{\overline{\overline{u}}'_{0}}{\varphi-\overline{\overline{u}}_{0}'}_{\HH^{1}_{\mathrm{D}}(\Omega_0,\R^2)}\geq\int_{\Omega_{0}}\left(\mathrm{div}\left(f\theta^{\top}\right)-\nabla{\theta}f\right)\cdot\left( \varphi-\overline{\overline{u}}'_{0}\right)\\+\int_{\Omega_{0}}\left(\left(\mathrm{A}\mathrm{e}\left({u_0}\right)\right)\nabla{\theta}^{\top}+\mathrm{A}\left(\nabla{u_0}\nabla{\theta}\right)+\mathrm{A}\mathrm{e}\left(\nabla{\theta}^{\top}u_0\right)-\mathrm{div}(\theta)\mathrm{A}\mathrm{e}\left({u_0}\right)\right):\nabla{\left( \varphi-\overline{\overline{u}}'_{0}\right)}\\+\int_{\Omega_0}\mathrm{A}\mathrm{e}\left({u_0}\right):\mathrm{e}\left(\nabla{\theta}^{\top}\left( \varphi-\overline{\overline{u}}'_{0}\right)\right)-\int_{\Gamma_{\mathrm{T_0}^{u_0,g}_{\mathrm{N}}}}p(\theta)\frac{u_{0_\tau}}{\left\|u_{0_\tau}\right\|}\cdot \left( \varphi_\tau-{\overline{\overline{u}}'_{0}}_\tau\right) \\
 +\int_{\Gamma_{\mathrm{T_0}^{u_0,g}_{\mathrm{S}}}}p(\theta)\frac{\sigma_{\tau}\left(u_0\right)}{g}\cdot \left( \varphi_\tau-{\overline{\overline{u}}'_{0}}_\tau\right),
\end{multline*} 
for all $\varphi\in\mathcal{K}_0$. Using the equality~$-\mathrm{div}\left(\mathrm{A}\mathrm{e}(u_0)\right)=f$ in $\HH^1(\Omega_0,\R^2)$ and the divergence formula (see Proposition~\ref{div}), the proof is complete.
\end{proof}

\begin{myRem}\normalfont
Note that Hypothesis~\ref{hypo3} corresponds to the inversion of the symbols~$\mathrm{ME}\text{-}\mathrm{lim}$ and~$\int_{\Gamma_{\mathrm{T}_0}}$ in Equality~\eqref{Delta2}, which is an open question in general. We refer to~\cite[Appendix A]{BCJDC} and~\cite[Remark 2.26]{BCJDC2} for additional comments and some sufficient conditions for this inversion.
\end{myRem}

\begin{myRem}\normalfont
In Proposition~\ref{deriv11} (and in its proof), note that the set~$\mathcal{K}_0$ corresponds to the set~$\mathcal{K}_{u_{0},\frac{\sigma_{\tau}(E_{0}-u_{0})}{g}}$ with the notations introduced in our previous paper~\cite{BCJDC2} (see~\cite[proof of Theorem~2.25]{BCJDC2}).
\end{myRem}

\subsection{Directional material derivative~$\overline{u}'_0$ and directional shape derivative~$u'_0$}\label{section23a}

From Proposition~\ref{deriv11} and since $\overline{u}_{t}=\left(\mathrm{I}+t\nabla{\theta}^{\top}\right)^{-1}\overline{\overline{u}}_t$ for all~$t \geq 0$, it is possible now to state and prove the first main result of this paper that characterizes the directional material derivative~$\overline{u}'_0$.

\begin{myTheorem}[Directional material derivative~$\overline{u}'_0$]\label{materialderiv1}
Consider the framework of Proposition~\ref{deriv11}. Then the map
$t\in\R_{+} \mapsto \overline{u}_{t} \in \HH^{1}_{\mathrm{D}}(\Omega_0,\R^2)$ is differentiable at $t=0$ and its derivative $\overline{u}'_{0} \in\mathcal{K}_0-\nabla{\theta}^{\top}u_0$ (that is, the directional material derivative) is the unique solution to the variational inequality
\begin{multline}\label{inequalityofmaterialderiv1}
\dual{\overline{u}'_{0}}{v-\overline{u}'_{0}}_{\HH^{1}_{\mathrm{D}}(\Omega_0,\R^{2})}\geq-\int_{\Omega_0}\mathrm{div}\left(\mathrm{div}\left(\mathrm{A}\mathrm{e}(u_0)\right)\theta^{\top}\right)\cdot\left(v-\overline{u}'_0\right)\\+\int_{\Omega_0}\left(\left(\mathrm{A}\mathrm{e}\left({u_0}\right)\right)\nabla{\theta}^{\top}+\mathrm{A}\left(\nabla{u_0}\nabla{\theta}\right)-\mathrm{div}(\theta)\mathrm{A}\mathrm{e}\left({u_0}\right)\right):\nabla{\left(v-\overline{u}'_{0}\right)}\\+\dual{\mathrm{A}\mathrm{e}(u_0)\nn}{\nabla{\theta}^{\top}\left(v-\overline{u}'_{0}\right)}_{\HH^{-1/2}(\Gamma_0,\R^2)\times\HH^{1/2}(\Gamma_0,\R^2)}-\int_{\Gamma_{\mathrm{T_0}^{u_0,g}_{\mathrm{N}}}}p(\theta)\frac{u_{0_\tau}}{\left\|u_{0_\tau}\right\|}\cdot \left(v_\tau-\overline{u}'_{0_\tau}\right)\\+\int_{\Gamma_{\mathrm{T_0}^{u_0,g}_{\mathrm{S}}}}p(\theta)\frac{\sigma_{\tau}(u_0)}{g}\cdot \left(v_\tau-\overline{u}'_{0_\tau}\right), \qquad \forall v\in\mathcal{K}_0-\nabla{\theta}^{\top}u_0, 
\end{multline}
where
\begin{multline*}
    \mathcal{K}_0-\nabla{\theta}^{\top}u_0= \biggl\{ v\in \HH^{1}_{\mathrm{D}}(\Omega_0,\R^2)\mid v_\tau=-\left(\nabla{\theta}^{\top}u_0\right)_\tau \text{ \textit{a.e.}\ on } \Gamma_{\mathrm{T_0}^{u_0,g}_{\mathrm{D}}} \\ \text{ and } \left(v_\tau+\left(\nabla{\theta}^{\top}u_0\right)_\tau\right)\in\R_{-}\frac{\sigma_{\tau}(u_0)}{g} \text{ \textit{a.e.}\ on } \Gamma_{\mathrm{T_0}^{u_0,g}_{\mathrm{S}}} \biggl\}.    
\end{multline*}
\end{myTheorem}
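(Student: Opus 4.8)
The plan is to transfer the characterization of $\overline{\overline{u}}'_0$ obtained in Proposition~\ref{deriv11} through the linear change of variables $\overline{u}_t = (\mathrm{I}+t\nabla{\theta}^{\top})^{-1}\overline{\overline{u}}_t$. First I would prove the differentiability of $t \mapsto \overline{u}_t$ at $t=0$: since $\overline{\overline{u}}_t = (\mathrm{I}+t\nabla{\theta}^{\top})\overline{u}_t$ and both $t \mapsto (\mathrm{I}+t\nabla{\theta}^{\top})^{-1} \in \LL^\infty(\R^2,\R^{2\times2})$ (see item~\ref{diff13}) and $t \mapsto \overline{\overline{u}}_t$ are differentiable at $t=0$, the product rule in $\HH^1_{\mathrm{D}}(\Omega_0,\R^2)$ gives differentiability of $\overline{u}_t = (\mathrm{I}+t\nabla{\theta}^{\top})^{-1}\overline{\overline{u}}_t$ with $\overline{u}'_0 = \overline{\overline{u}}'_0 - \nabla{\theta}^{\top}u_0$ (using $\overline{\overline{u}}_0 = u_0$, since at $t=0$ the change of variables is the identity). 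This last identity immediately produces the description of the constraint set: $\overline{u}'_0 \in \mathcal{K}_0 - \nabla{\theta}^{\top}u_0$, and expanding $\varphi_\tau = 0$ resp.\ $\varphi_\tau \in \R_- \frac{\sigma_\tau(u_0)}{g}$ for $\varphi = v + \nabla{\theta}^{\top}u_0 \in \mathcal{K}_0$ yields exactly the stated form of $\mathcal{K}_0 - \nabla{\theta}^{\top}u_0$.

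Next I would substitute into the variational inequality of Proposition~\ref{deriv11}. For any $v \in \mathcal{K}_0 - \nabla{\theta}^{\top}u_0$, set $\varphi := v + \nabla{\theta}^{\top}u_0 \in \mathcal{K}_0$, so that $\varphi - \overline{\overline{u}}'_0 = v - \overline{u}'_0$ (the shift $\nabla{\theta}^{\top}u_0$ cancels). The right-hand side of the inequality in Proposition~\ref{deriv11} is already written entirely in terms of the \emph{difference} $\varphi - \overline{\overline{u}}'_0$, hence all those integral terms are unchanged upon replacing $\varphi - \overline{\overline{u}}'_0$ by $v - \overline{u}'_0$; in particular the volume term, the boundary duality pairing $\dual{\mathrm{A}\mathrm{e}(u_0)\nn}{\nabla{\theta}^{\top}(\cdot)}$, and the two surface integrals over $\Gamma_{\mathrm{T_0}^{u_0,g}_{\mathrm{N}}}$ and $\Gamma_{\mathrm{T_0}^{u_0,g}_{\mathrm{S}}}$ carry over verbatim. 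On the left-hand side, I must rewrite $\dual{\overline{\overline{u}}'_0}{\varphi - \overline{\overline{u}}'_0}_{\HH^1_{\mathrm{D}}(\Omega_0,\R^2)}$ in terms of $\overline{u}'_0$ and $v$: using $\overline{\overline{u}}'_0 = \overline{u}'_0 + \nabla{\theta}^{\top}u_0$ and $\varphi - \overline{\overline{u}}'_0 = v - \overline{u}'_0$, this becomes $\dual{\overline{u}'_0}{v - \overline{u}'_0}_{\HH^1_{\mathrm{D}}(\Omega_0,\R^2)} + \dual{\nabla{\theta}^{\top}u_0}{v - \overline{u}'_0}_{\HH^1_{\mathrm{D}}(\Omega_0,\R^2)}$. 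The extra term $\dual{\nabla{\theta}^{\top}u_0}{v - \overline{u}'_0}_{\HH^1_{\mathrm{D}}(\Omega_0,\R^2)} = \int_{\Omega_0} \mathrm{A}\mathrm{e}(\nabla{\theta}^{\top}u_0) : \mathrm{e}(v - \overline{u}'_0)$ is then moved to the right-hand side with a minus sign, which is precisely why the term $\mathrm{A}\mathrm{e}(\nabla{\theta}^{\top}u_0)$ present in~\eqref{deriveavantmat} (and visible also as the $\int_{\Omega_0}\mathrm{A}\mathrm{e}(u_0):\mathrm{e}(\nabla{\theta}^{\top}(\cdot))$ term there) disappears in passing to~\eqref{inequalityofmaterialderiv1}. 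Comparing the Frobenius-inner-product integrand $\left(\mathrm{A}\mathrm{e}(u_0)\right)\nabla{\theta}^{\top}+\mathrm{A}(\nabla{u_0}\nabla{\theta})+\mathrm{A}\mathrm{e}(\nabla{\theta}^{\top}u_0)-\mathrm{div}(\theta)\mathrm{A}\mathrm{e}(u_0)$ of Proposition~\ref{deriv11} with that of~\eqref{inequalityofmaterialderiv1}, the cancellation of the $\mathrm{A}\mathrm{e}(\nabla{\theta}^{\top}u_0)$ contribution against the relocated scalar-product term confirms the bookkeeping.

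Finally I would invoke uniqueness. The bilinear form $\dual{\cdot}{\cdot}_{\HH^1_{\mathrm{D}}(\Omega_0,\R^2)}$ is a genuine scalar product on $\HH^1_{\mathrm{D}}(\Omega_0,\R^2)$ (as recorded around~\eqref{scalarusuel11}), the affine set $\mathcal{K}_0 - \nabla{\theta}^{\top}u_0$ is nonempty closed and convex (being a translate of the nonempty closed convex $\mathcal{K}_0$), and the right-hand side is a continuous linear functional of $v - \overline{u}'_0$; hence the variational inequality~\eqref{inequalityofmaterialderiv1} admits a unique solution by the standard Lions--Stampacchia argument, and that solution is $\overline{u}'_0$. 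The main obstacle is not conceptual but purely one of careful algebraic bookkeeping: one must track precisely how the term $\mathrm{A}\mathrm{e}(\nabla{\theta}^{\top}u_0)$ moves from inside the Frobenius integrand to the scalar-product side and cancels, ensuring that no term is double-counted and that the surface integrands (which involve $v_\tau$ and $\overline{u}'_{0\tau}$ rather than the full vectors) are genuinely invariant under the shift — this last point is immediate since $\varphi - \overline{\overline{u}}'_0 = v - \overline{u}'_0$ exactly, so tangential components of the difference coincide. A minor auxiliary check is that $\nabla{\theta}^{\top}u_0 \in \HH^1_{\mathrm{D}}(\Omega_0,\R^2)$, which holds because $\theta \in \mathcal{C}^{2,\infty}_{\mathrm{D}}(\R^2,\R^2)$ vanishes on $\Gamma_{\mathrm{D}}$ and $u_0 \in \HH^1_{\mathrm{D}}(\Omega_0,\R^2)$, with $\nabla{\theta}$ bounded.
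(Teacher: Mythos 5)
Your proposal is correct and follows essentially the same route as the paper: differentiate $\overline{u}_t=(\mathrm{I}+t\nabla\theta^{\top})^{-1}\overline{\overline{u}}_t$ to get $\overline{u}'_0=\overline{\overline{u}}'_0-\nabla\theta^{\top}u_0$, substitute $\varphi=v+\nabla\theta^{\top}u_0$ into the variational inequality of Proposition~\ref{deriv11} (noting $\varphi-\overline{\overline{u}}'_0=v-\overline{u}'_0$), and move $\dual{\nabla\theta^{\top}u_0}{v-\overline{u}'_0}_{\HH^1_{\mathrm{D}}(\Omega_0,\R^2)}$ to the right-hand side where it cancels the $\mathrm{A}\mathrm{e}(\nabla\theta^{\top}u_0)$ contribution by the symmetry of $\mathrm{A}$. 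The only difference is that you make this last cancellation and the Lions--Stampacchia uniqueness argument explicit, whereas the paper leaves them implicit.
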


\begin{proof}
Since $\overline{u}_{t}=\left(\mathrm{I}+t\nabla{\theta}^{\top}\right)^{-1}\overline{\overline{u}}_t$ for all~$t \ge 0$, one deduces from Proposition~\ref{deriv11} that the map~$t\in\R_{+} \mapsto \overline{u}_{t} \in \HH^{1}_{\mathrm{D}}(\Omega_0,\R^2)$ is differentiable at $t=0$ with $\overline{u}'_{0}=\overline{\overline{u}}'_{0}-\nabla{\theta}^{\top}u_0\in\HH^{1}_{\mathrm{D}}(\Omega_0,\R^2)$.
Moreover, from the variational inequality satisfied by $\overline{\overline{u}}'_0$, one deduces that
\begin{multline*}
\dual{\overline{u}'_0+\nabla{\theta}^{\top}u_0}{\varphi-\nabla{\theta}^{\top}u_0-\overline{u}'_0 }_{\HH^{1}_{\mathrm{D}}(\Omega_0,\R^{2})}\geq-\int_{\Omega_0}\mathrm{div}\left(\mathrm{div}\left(\mathrm{A}\mathrm{e}(u_0)\right)\theta^{\top}\right)\cdot\left(\varphi-\nabla{\theta}^{\top}u_0-\overline{u}'_0\right)\\+\int_{\Omega_0}\left(\left(\mathrm{A}\mathrm{e}\left({u_0}\right)\right)\nabla{\theta}^{\top}+\mathrm{A}\left(\nabla{u_0}\nabla{\theta}\right)+\mathrm{A}\mathrm{e}{\left(\nabla{\theta}^{\top}u_0\right)}-\mathrm{div}(\theta)\mathrm{A}\mathrm{e}\left({u_0}\right)\right):\nabla{\left(\varphi-\nabla{\theta}^{\top}u_0-\overline{u}'_0\right)}\\+\dual{\mathrm{A}\mathrm{e}(u_0)\nn}{\nabla{\theta}^{\top}\left(\varphi-\nabla{\theta}^{\top}u_0-\overline{u}'_0\right)}_{\HH^{-1/2}(\Gamma_0,\R^2)\times\HH^{1/2}(\Gamma_0,\R^2)}\\-\int_{\Gamma_{\mathrm{T_0}^{u_0,g}_{\mathrm{N}}}}p(\theta)\frac{u_{0_\tau}}{\left\|u_{0_\tau}\right\|}\cdot \left(\varphi_\tau-\left(\nabla{\theta}^{\top}u_0\right)_\tau-{\overline{u}'_0}_\tau\right)+\int_{\Gamma_{\mathrm{T_0}^{u_0,g}_{\mathrm{S}}}}p(\theta)\frac{\sigma_{\tau}(u_0)}{g}\cdot \left(\varphi_\tau-\left(\nabla{\theta}^{\top}u_0\right)_\tau-{\overline{u}'_0}_\tau\right),
\end{multline*}
for all $\varphi\in\mathcal{K}_0$, and this is also
\begin{multline*}
\dual{\overline{u}'_0+\nabla{\theta}^{\top}u_0}{v-\overline{u}'_0 }_{\HH^{1}_{\mathrm{D}}(\Omega_0,\R^{2})}\geq-\int_{\Omega_0}\mathrm{div}\left(\mathrm{div}\left(\mathrm{A}\mathrm{e}(u_0)\right)\theta^{\top}\right)\cdot\left(v-\overline{u}'_0\right)\\+\int_{\Omega_0}\left(\left(\mathrm{A}\mathrm{e}\left({u_0}\right)\right)\nabla{\theta}^{\top}+\mathrm{A}\left(\nabla{u_0}\nabla{\theta}\right)+\mathrm{A}\mathrm{e}{\left(\nabla{\theta}^{\top}u_0\right)}-\mathrm{div}(\theta)\mathrm{A}\mathrm{e}\left({u_0}\right)\right):\nabla{\left(v-\overline{u}'_0\right)}\\+\dual{\mathrm{A}\mathrm{e}(u_0)\nn}{\nabla{\theta}^{\top}\left(v-\overline{u}'_0\right)}_{\HH^{-1/2}(\Gamma_0,\R^2)\times\HH^{1/2}(\Gamma_0,\R^2)}\\-\int_{\Gamma_{\mathrm{T_0}^{u_0,g}_{\mathrm{N}}}}p(\theta)\frac{u_{0_\tau}}{\left\|u_{0_\tau}\right\|}\cdot \left(v_\tau-{\overline{u}'_0}_\tau\right)+\int_{\Gamma_{\mathrm{T_0}^{u_0,g}_{\mathrm{S}}}}p(\theta)\frac{\sigma_{\tau}(u_0)}{g}\cdot \left(v_\tau-{\overline{u}'_0}_\tau\right),
\end{multline*}
for all $v\in\mathcal{K}_0-\nabla{\theta}^{\top}u_0$, which concludes the proof.
\end{proof}

The presentation of Theorem~\ref{materialderiv1} can be improved under additional regularity assumptions.

\begin{myCor}\label{materialderiv2}
Consider the framework of Proposition~\ref{deriv11} with the additional assumption that~$u_{0}\in\HH^{3}(\Omega_{0},\R^2)$. Then the directional material derivative
$\overline{u}'_0\in\mathcal{K}_0-\nabla{\theta}^{\top}u_0$ is the unique weak solution to the tangential Signorini problem given by
\begin{equation*}
{
{\arraycolsep=2pt
\left\{
\begin{array}{rcll}
-\mathrm{div}\left(\mathrm{A}\mathrm{e}(\overline{u}'_0)\right)+\mathrm{div}\left(\mathrm{A}\mathrm{e}\left(\nabla{u_0}\theta\right)\right) &=& 0  & \text{ in } \Omega_0 , \\[5pt]
\overline{u}'_0 & = & 0  & \text{ on } \Gamma_{\mathrm{D}} ,\\[5pt]
\sigma_{\nn}(\overline{u}'_0) - {\xi^{m}(\theta)}_\nn & = & 0  & \text{ on } \Gamma_{\mathrm{T_0}} ,\\[5pt]
\sigma_\tau (\overline{u}'_0) +p(\theta)\frac{u_{0_\tau}}{\left\|u_{0_\tau}\right\|} -{\xi^{m}(\theta)}_{\tau} & = & 0  & \text{ on } \Gamma_{\mathrm{T_0}^{u_0,g}_{\mathrm{N}}} ,\\[5pt]
{\overline{u}'_0}_{\tau} +\left(\nabla{\theta}^{\top}u_0\right)_\tau & = & 0  & \text{ on } \Gamma_{\mathrm{T_0}^{u_0,g}_{\mathrm{D}}},\\[5pt]
\left({\overline{u}'_0}_{\tau}+\left(\nabla{\theta}^{\top}u_0\right)_\tau\right)\in\R_{-}\frac{\sigma_{\tau}(u_0)}{g}  \\[5pt] \text{and } \left(\sigma_\tau (\overline{u}'_0)-p (\theta)\frac{\sigma_{\tau}(u_0)}{g}-{\xi^{m}(\theta)}_{\tau}\right)\cdot \frac{\sigma_{\tau}(u_0)}{g}\leq0 \\[5pt] \text{and }\left({\overline{u}'_0}_{\tau}+\left(\nabla{\theta}^{\top}u_0\right)_\tau\right)\cdot\left(\sigma_\tau (\overline{u}'_0)-p(\theta) \frac{\sigma_{\tau}(u_0)}{g}-{\xi^{m}(\theta)}_{\tau}\right)  & = & 0  & \text{ on } \Gamma_{\mathrm{T_0}^{u_0,g}_{\mathrm{S}}}.
\end{array}
\right.}}
\end{equation*}
where $\xi^m(\theta):=\left((\mathrm{A}\mathrm{e}\left({u_0}\right))\nabla{\theta}^{\top}+\mathrm{A}(\nabla{u_0}\nabla{\theta})+(\nabla{\theta}-\mathrm{div}(\theta)\mathrm{I})\mathrm{A}\mathrm{e}\left({u_0}\right)\right)\nn\in\LL^2(\Gamma_{\mathrm{T}_0},\R^2)$.
\end{myCor}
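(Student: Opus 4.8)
The plan is to upgrade the variational inequality of Theorem~\ref{materialderiv1} to its strong (boundary value problem) formulation under the extra regularity~$u_0 \in \HH^3(\Omega_0,\R^2)$, which guarantees that all the terms appearing in~\eqref{inequalityofmaterialderiv1} are honest $\LL^2$-functions on $\Gamma_{\mathrm{T}_0}$ rather than mere duality pairings. First I would rewrite the duality pairing~$\dual{\mathrm{A}\mathrm{e}(u_0)\nn}{\nabla\theta^\top(v-\overline{u}'_0)}_{\HH^{-1/2}\times\HH^{1/2}}$ as a boundary integral~$\int_{\Gamma_0}\mathrm{A}\mathrm{e}(u_0)\nn\cdot\nabla\theta^\top(v-\overline{u}'_0)$, which is licit since $u_0 \in \HH^3$ implies $\mathrm{A}\mathrm{e}(u_0)\nn \in \HH^{1/2}(\Gamma_0,\R^2) \hookrightarrow \LL^2(\Gamma_0,\R^2)$; note also that on $\Gamma_{\mathrm{D}}$ we have $\theta = 0$, so this boundary integral reduces to one over $\Gamma_{\mathrm{T}_0}$. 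Next I would transpose: by the identity~$\mathrm{A}\mathrm{e}(u_0)\nn\cdot\nabla\theta^\top w = \left(\nabla\theta - \mathrm{div}(\theta)\mathrm{I}\right)\mathrm{A}\mathrm{e}(u_0)\nn \cdot w + \mathrm{div}(\theta)\,\mathrm{A}\mathrm{e}(u_0)\nn\cdot w$ — or more directly, collecting this term together with the bulk term $\int_{\Omega_0}\mathrm{A}\mathrm{e}(u_0):\mathrm{e}(\nabla\theta^\top(v-\overline{u}'_0))$ that was hidden in the previous proof — and integrating the bulk term by parts using $-\mathrm{div}(\mathrm{A}\mathrm{e}(u_0)) = f$, I would reorganize the right-hand side of~\eqref{inequalityofmaterialderiv1} so that the boundary contributions coalesce into~$\int_{\Gamma_{\mathrm{T}_0}} \xi^m(\theta)\cdot(v_\tau + \text{normal part})$ with $\xi^m(\theta)$ as defined in the statement, while the interior contributions reduce to $\int_{\Omega_0}\mathrm{div}(\mathrm{A}\mathrm{e}(\nabla u_0\,\theta))\cdot(v-\overline{u}'_0)$ after applying standard shape-calculus identities such as $\mathrm{div}(\mathrm{div}(\mathrm{A}\mathrm{e}(u_0))\theta^\top) = \nabla f\,\theta + f\,\mathrm{div}(\theta) + \dots$ — here I would invoke the algebraic identities already collected in the earlier differentiability results \ref{diff11}--\ref{diff14} and the divergence formula (Proposition~\ref{div}).

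Once the variational inequality is in the clean form
\[
\dual{\overline{u}'_0}{v-\overline{u}'_0}_{\HH^1_{\mathrm D}} \geq \int_{\Omega_0} \mathrm{div}(\mathrm{A}\mathrm{e}(\nabla u_0\,\theta))\cdot(v-\overline{u}'_0) + \int_{\Gamma_{\mathrm T_0}}\xi^m(\theta)\cdot(v_\tau - \overline{u}'_{0_\tau}) + \cdots,
\]
the passage to the strong form is the usual density/localization argument for variational inequalities on the closed convex set $\mathcal{K}_0 - \nabla\theta^\top u_0$. I would first test with $v = \overline{u}'_0 \pm \psi$ for $\psi \in \mathcal{C}^\infty_c(\Omega_0,\R^2)$ (which stays in the affine constraint set since such $\psi$ vanish near $\Gamma_0$) to extract the interior Euler--Lagrange equation $-\mathrm{div}(\mathrm{A}\mathrm{e}(\overline{u}'_0)) + \mathrm{div}(\mathrm{A}\mathrm{e}(\nabla u_0\,\theta)) = 0$ in $\Omega_0$; this gives $\overline{u}'_0$ enough regularity for its normal stress $\sigma_{\nn}(\overline{u}'_0)$ and shear stress $\sigma_\tau(\overline{u}'_0)$ to be well-defined in $\LL^2(\Gamma_0,\R^2)$, so that Green's formula applies. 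Then, having a boundary integral identity, I would localize on each of the three pieces $\Gamma_{\mathrm{T_0}^{u_0,g}_{\mathrm N}}$, $\Gamma_{\mathrm{T_0}^{u_0,g}_{\mathrm D}}$, $\Gamma_{\mathrm{T_0}^{u_0,g}_{\mathrm S}}$: on $\Gamma_{\mathrm{T_0}^{u_0,g}_{\mathrm N}}$ the constraint is inactive so $v_\tau$ is free and one reads off the Neumann-type condition $\sigma_\tau(\overline{u}'_0) + p(\theta)\frac{u_{0_\tau}}{\|u_{0_\tau}\|} - \xi^m(\theta)_\tau = 0$; on $\Gamma_{\mathrm{T_0}^{u_0,g}_{\mathrm D}}$ the constraint pins $v_\tau = -(\nabla\theta^\top u_0)_\tau$, giving the Dirichlet-type condition directly; and on $\Gamma_{\mathrm{T_0}^{u_0,g}_{\mathrm S}}$ the one-sided constraint $v_\tau \in -(\nabla\theta^\top u_0)_\tau + \R_-\frac{\sigma_\tau(u_0)}{g}$ produces, by the standard argument for obstacle-type inequalities (test with admissible perturbations along and transverse to the ray), the complementarity triple: membership in the ray, the sign condition on $(\sigma_\tau(\overline{u}'_0) - p(\theta)\frac{\sigma_\tau(u_0)}{g} - \xi^m(\theta)_\tau)\cdot\frac{\sigma_\tau(u_0)}{g}$, and the orthogonality relation. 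The normal condition $\sigma_{\nn}(\overline{u}'_0) - \xi^m(\theta)_\nn = 0$ on all of $\Gamma_{\mathrm{T}_0}$ comes out because $v_\nn$ is unconstrained everywhere on $\Gamma_{\mathrm{T}_0}$.

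The main obstacle I anticipate is purely bookkeeping: correctly assembling the boundary term $\xi^m(\theta)$ from the three scattered contributions (the transposed duality pairing $\mathrm{A}\mathrm{e}(u_0)\nn\cdot\nabla\theta^\top$, the leftover surface term from integrating $\int_{\Omega_0}\mathrm{A}\mathrm{e}(u_0):\mathrm{e}(\nabla\theta^\top\cdot)$ by parts, and the piece coming from $\mathrm{div}(\theta)\mathrm{A}\mathrm{e}(u_0)$), and checking that the interior terms really collapse to $\mathrm{div}(\mathrm{A}\mathrm{e}(\nabla u_0\,\theta))$ — this uses the identity $\mathrm{A}\mathrm{e}(\nabla u_0\,\theta) = \mathrm{A}(\nabla(\nabla u_0\,\theta))$ together with $-\mathrm{div}(\mathrm{A}\mathrm{e}(u_0)) = f$ and the product rule for the divergence, and is where the $\HH^3$-regularity of $u_0$ is genuinely needed so that $\nabla u_0\,\theta \in \HH^2(\Omega_0,\R^2)$ and $\mathrm{div}(\mathrm{A}\mathrm{e}(\nabla u_0\,\theta)) \in \LL^2(\Omega_0,\R^2)$. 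Everything else — the localization on $\Gamma_{\mathrm{T}_0}$, the derivation of the complementarity conditions on $\Gamma_{\mathrm{T_0}^{u_0,g}_{\mathrm S}}$, and uniqueness of the weak solution — is completely parallel to the corresponding passages in~\cite[Theorem~2.25]{BCJDC2} and~\cite{ABCJ}, to which I would refer rather than reproduce in full.
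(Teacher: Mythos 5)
Your proposal follows essentially the same route as the paper's proof: apply the divergence formula to the variational inequality of Theorem~\ref{materialderiv1}, use the identity $\mathrm{div}\left(\mathrm{A}\mathrm{e}\left(\nabla{u_0}\theta\right)\right)=\mathrm{div}\left(\mathrm{div}(\mathrm{A}\mathrm{e}(u_0))\theta^{\top}+\left(\mathrm{A}\mathrm{e}\left({u_0}\right)\right)\nabla{\theta}^{\top}+\mathrm{A}\left(\nabla{u_0}\nabla{\theta}\right)-\mathrm{div}(\theta)\mathrm{A}\mathrm{e}\left({u_0}\right)\right)$ (valid in $\LL^2(\Omega_0,\R^2)$ thanks to $u_0\in\HH^3$) to collapse the interior terms, and then identify the resulting inequality with the weak formulation of the tangential Signorini problem, delegating the localization and complementarity extraction to~\cite[Section~2.1.2]{BCJDC2} just as the paper does. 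The only blemish is a sign in your displayed ``clean form'': consistently with the equation $-\mathrm{div}\left(\mathrm{A}\mathrm{e}(\overline{u}'_0)\right)+\mathrm{div}\left(\mathrm{A}\mathrm{e}\left(\nabla{u_0}\theta\right)\right)=0$, the interior source term must appear as $-\int_{\Omega_0}\mathrm{div}\left(\mathrm{A}\mathrm{e}\left(\nabla{u_0}\theta\right)\right)\cdot\left(v-\overline{u}'_0\right)$.
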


\begin{proof}
Since $u_{0}\in\HH^{2}(\Omega_{0},\R^2)$ and $\theta \in \mathcal{C}_{\mathrm{D}}^{2,\infty}(\R^{2},\R^{2})$, it holds that 
    $$
\mathrm{div}\left(\left(\mathrm{A}\mathrm{e}\left({u_0}\right)\right)\nabla{\theta}^{\top}+\mathrm{A}\left(\nabla{u_0}\nabla{\theta}\right)-\mathrm{div}(\theta)\mathrm{A}\mathrm{e}\left({u_0}\right)\right)\in\LL^2(\Omega_0,\R^2).
$$
Thus, using the divergence formula (see Proposition~\ref{div}) in Inequality~\eqref{inequalityofmaterialderiv1}, we get that
\begin{multline}\label{formulderivmatH2}
\dual{\overline{u}'_0}{v-\overline{u}'_0 }_{\HH^{1}_{\mathrm{D}}(\Omega_0,\R^2)}\geq \int_{\Gamma_{\mathrm{T}_0}}\xi^{m}(\theta)\cdot\left(v-\overline{u}'_0\right)\\ -\int_{\Omega_0}\mathrm{div}\left(\mathrm{div}(\mathrm{A}\mathrm{e}(u_0))\theta^{\top}+\left(\mathrm{A}\mathrm{e}\left({u_0}\right)\right)\nabla{\theta}^{\top}+\mathrm{A}\left(\nabla{u_0}\nabla{\theta}\right)-\mathrm{div}(\theta)\mathrm{A}\mathrm{e}\left({u_0}\right)\right)\cdot\left(v-\overline{u}'_0\right)\\-\int_{\Gamma_{\mathrm{T_0}^{u_0,g}_{\mathrm{N}}}}p(\theta)\frac{u_{0_\tau}}{\left\|u_{0_\tau}\right\|}\cdot \left(v_\tau-\overline{u}'_{0_\tau}\right)+\int_{\Gamma_{\mathrm{T_0}^{u_0,g}_{\mathrm{S}}}}p(\theta)\frac{\sigma_{\tau}(u_0)}{g}\cdot \left(v_\tau-\overline{u}'_{0_\tau}\right),
\end{multline}
for all $v\in\mathcal{K}_0-\nabla{\theta}^{\top}u_0$. Furthermore, one has $\mathrm{div}\left(\mathrm{A}\mathrm{e}\left(\nabla{u_0}\theta\right)\right)\in\LL^2(\Omega_0,\R^2)$ from the fact that~$u_0\in\HH^3(\Omega_0,\R^2)$. Thus, using the equality
$$
\mathrm{div}\left(\mathrm{A}\mathrm{e}\left(\nabla{u_0}\theta\right)\right)=\mathrm{div}\left(\mathrm{div}(\mathrm{A}\mathrm{e}(u_0))\theta^{\top}+\left(\mathrm{A}\mathrm{e}\left({u_0}\right)\right)\nabla{\theta}^{\top}+\mathrm{A}\left(\nabla{u_0}\nabla{\theta}\right)-\mathrm{div}(\theta)\mathrm{A}\mathrm{e}\left({u_0}\right)\right),
$$
in $\LL^2(\Omega_0,\R^2)$, it follows that
\begin{multline*}
\dual{\overline{u}'_0}{v-\overline{u}'_0 }_{\HH^{1}_{\mathrm{D}}(\Omega_0,\R^2)}\geq -\int_{\Omega_0}\mathrm{div}\left(\mathrm{A}\mathrm{e}\left(\nabla{u_0}\theta\right)\right)\cdot\left(v-\overline{u}'_0\right)
+
\int_{\Gamma_{\mathrm{T}_0}}\xi^{m}(\theta)\cdot\left(v-\overline{u}'_0\right)\\-\int_{\Gamma_{\mathrm{T_0}^{u_0,g}_{\mathrm{N}}}}p(\theta)\frac{u_{0_\tau}}{\left\|u_{0_\tau}\right\|}\cdot \left(v_\tau-\overline{u}'_{0_\tau}\right)+\int_{\Gamma_{\mathrm{T_0}^{u_0,g}_{\mathrm{S}}}}p(\theta)\frac{\sigma_{\tau}(u_0)}{g}\cdot \left(v_\tau-\overline{u}'_{0_\tau}\right),
\end{multline*}
for all $v\in\mathcal{K}_0-\nabla{\theta}^{\top}u_0$, which corresponds to the weak formulation of the expected tangential Signorini problem (see \cite[Section 2.1.2]{BCJDC2} for details).
\end{proof}

\begin{myRem}\normalfont
Note that, from the proof of Corollary~\ref{materialderiv2}, one can get, under the weaker assumption~$u_0 \in \HH^2(\Omega_0,\R^2)$, that the directional material derivative $\overline{u}'_0$ is the solution to the variational inequality~\eqref{formulderivmatH2} which is, from \cite[Section 2.1.2]{BCJDC2}, the weak formulation of a tangential Signorini problem, with the source term given by~$-\mathrm{div}(\mathrm{div}(\mathrm{A}\mathrm{e}(u_0))\theta^{\top}+(\mathrm{A}\mathrm{e}(u_0))\nabla{\theta}^{\top}+\mathrm{A}(\nabla{u_0}\nabla{\theta})-\mathrm{div}(\theta)\mathrm{A}\mathrm{e}(u_0))\in\LL^2(\Omega_0,\R^2)$.
%It is important to note that, to the best of our knowledge, there is no regularity result for the solution to the Tresca friction problem with respect to the data. To obtain this regularity result in our case is a highly nontrivial work and is not the main focus of this paper. However, we can mention the works~\cite{SAITO,SAITOFUJI} which deal with regularity results for variational inequalities concerning the Stokes equations. \textcolor{blue}{\textbf{Loic:} Peut-être que cette remarque peut être effacée, ou au moins réduite (elle apparaît déjà dans nos papiers précédents, non?).}
\end{myRem}

Thanks to Corollary~\ref{materialderiv2}, we are now in a position to characterize the directional shape derivative~$u'_0$.

\begin{myCor}[Directional shape derivative~$u'_0$]\label{shapederiv1}
Consider the framework of Proposition~\ref{deriv11} with the additional assumptions that~$u_{0}\in\HH^{3}(\Omega_{0},\R^2)$ and that $\Gamma_0$ is of class $\mathcal{C}^3$. Then the directional shape derivative, defined by~$u'_{0}:=\overline{u}'_0-\nabla{u_0}\theta\in\mathcal{K}_0-\nabla{\theta}^{\top}u_0-\nabla{u_0}\theta$, is the unique weak solution to the tangential Signorini problem
\begin{equation*}
{
{\arraycolsep=2pt
\left\{
\begin{array}{rcll}
-\mathrm{div}\left(\mathrm{A}\mathrm{e}(u'_0)\right) &=& 0  & \text{ in } \Omega_0 , \\
u'_0 & = & 0  & \text{ on } \Gamma_{\mathrm{D}} ,\\[5pt]
\sigma_{\nn}(u'_0) -{\xi^{s}(\theta)}_\nn & = & 0  & \text{ on } \Gamma_{\mathrm{T_0}} ,\\[5pt]
\sigma_\tau (u'_0) +p(\theta)\frac{u_{0_\tau}}{\left\|u_{0_\tau}\right\|} -{\xi^{s}(\theta)}_{\tau} & = & 0  & \text{ on } \Gamma_{\mathrm{T_0}^{u_0,g}_{\mathrm{N}}} ,\\[5pt]
{u'_0}_{\tau} - W(\theta)_\tau & = & 0  & \text{ on } \Gamma_{\mathrm{T_0}^{u_0,g}_{\mathrm{D}}},\\[5pt]
\left({u'_0}_{\tau}-W(\theta)_\tau\right)\in\R_{-}\frac{\sigma_{\tau}(u_0)}{g} \\ [5pt]
\text{and } \left(\sigma_\tau (\overline{u}'_0)-p(\theta) \frac{\sigma_{\tau}(u_0)}{g}-{\xi^{s}(\theta)}_{\tau}\right)\cdot \frac{\sigma_{\tau}(u_0)}{g}\leq0 \\ [5pt]
\text{and } \left({u'_0}_{\tau}-W(\theta)_\tau\right)\cdot\left(\sigma_\tau (\overline{u}'_0)-p(\theta) \frac{\sigma_{\tau}(u_0)}{g}-{\xi^{s}(\theta)}_{\tau}\right)  & = & 0  & \text{ on } \Gamma_{\mathrm{T_0}^{u_0,g}_{\mathrm{S}}}.
\end{array}
\right.}}
\end{equation*}
where $W(\theta):=-\nabla{\theta}^{\top}u_0-\nabla{u_0}\theta\in\HH^{1}(\Omega_0,\R^2)$,
\begin{multline*}
    \xi^s(\theta):=\theta \cdot \nn\left(\partial_{\nn}\left(\mathrm{A}\mathrm{e}\left({u_0}\right)\nn\right)-\partial_{\nn}\left(\mathrm{A}\mathrm{e}\left({u_0}\right)\right)\nn\right)+\mathrm{A}\mathrm{e}\left({u_0}\right)\nabla_{\tau}\left(\theta\cdot\nn\right)-\nabla{\left(\mathrm{A}\mathrm{e}\left({u_0}\right)\nn\right)}\theta\\+\left(\nabla{\theta}-\mathrm{div}_{\tau}(\theta)\mathrm{I}\right)\mathrm{A}\mathrm{e}\left({u_0}\right)\nn\in\LL^2(\Gamma_{\mathrm{T}_0},\R^2),
    \end{multline*}
where $\partial_{\nn}\left(\mathrm{A}\mathrm{e}\left({u_0}\right)\nn\right):=\nabla{\left(\mathrm{A}\mathrm{e}\left({u_0}\right)\nn\right)}\nn$ stands for the normal derivative of $\mathrm{A}\mathrm{e}\left({u_0}\right)\nn$, and~$\partial_{\nn}\left(\mathrm{A}\mathrm{e}\left({u_0}\right)\right)$ is the matrix whose the $i$-th line is the transpose of the vector $\partial_{\nn}\left(\mathrm{A}\mathrm{e}\left({u_0}\right)_{i}\right):=\nabla{\left(\mathrm{A}\mathrm{e}\left({u_0}\right)_{i}\right)}\nn$, where~$\mathrm{A}\mathrm{e}\left({u_0}\right)_{i}$ is the transpose of the~$i$-th line of the matrix $\mathrm{A}\mathrm{e}\left({u_0}\right)$, for all~$i\in \{ 1,2 \}$.

\begin{comment}
\begin{multline*}
\dual{u}{\varphi-u }_{\HH^{1}_{\mathrm{D}}(\Omega,\R^{2})}\geq\\ \int_{\Omega_0}\left(\mathrm{div}\left(\mathrm{A}\mathrm{e}(u_0)\right)\theta^{\top}+\left(\mathrm{A}\mathrm{e}(u_0)\right)\nabla{\theta}^{\top}+\mathrm{A}\left(\nabla{u_0}\nabla{\theta}-\mathrm{e}\left(\nabla{u_0}\theta\right)\right)-\mathrm{div}(\theta)\mathrm{A}\mathrm{e}(u_0)\right):\nabla{\left(\varphi-u\right)}\\+\int_{\Gamma_{\mathrm{S}_0}}\left(\left(\theta\cdot\nn\right) f-\sigma_{\nn}(u_0)\nabla{\theta}^{\top}\nn\right)\cdot\left(\varphi-u\right),\\ \qquad\forall\varphi\in\mathrm{T}_{\mathrm{N}_{\mathcal{K}^{1}(\Omega_0)}}(u_0)\cap\left(\R\left(E_0-u_0\right)\right)^{\perp}+\nabla{\theta}u_0-\nabla{u_0}\theta.
\end{multline*}
\end{comment}

\end{myCor}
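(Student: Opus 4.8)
The plan is to deduce this corollary from Corollary~\ref{materialderiv2} through the substitution $\overline{u}'_0 = u'_0 + \nabla{u_0}\theta$, combined with a translation of the admissible set, which reduces everything to a single differential-geometric identity on $\Gamma_{\mathrm{T}_0}$ relating the \textit{material} boundary datum $\xi^m(\theta)$ to the \textit{shape} boundary datum $\xi^s(\theta)$.

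First I would handle the interior equation. By Corollary~\ref{materialderiv2}, $\overline{u}'_0$ solves $-\mathrm{div}(\mathrm{A}\mathrm{e}(\overline{u}'_0)) + \mathrm{div}(\mathrm{A}\mathrm{e}(\nabla{u_0}\theta)) = 0$ in $\Omega_0$, an identity in $\LL^2(\Omega_0,\R^2)$ by virtue of $u_0\in\HH^3(\Omega_0,\R^2)$; hence $u'_0 := \overline{u}'_0 - \nabla{u_0}\theta$ satisfies $-\mathrm{div}(\mathrm{A}\mathrm{e}(u'_0)) = 0$ in $\Omega_0$, and $u'_0 = 0$ on $\Gamma_{\mathrm{D}}$ (since $\theta = 0$ there and $\overline{u}'_0 = 0$ on $\Gamma_{\mathrm{D}}$). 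Next, I would start from the weak formulation~\eqref{formulderivmatH2} of Corollary~\ref{materialderiv2} (equivalently, from its rewriting with the volume density expressed through $\mathrm{div}(\mathrm{A}\mathrm{e}(\nabla{u_0}\theta))$), substitute $\overline{u}'_0 = u'_0 + \nabla{u_0}\theta$ and reparametrize the test functions by $v\mapsto v - \nabla{u_0}\theta$: since $\nabla{u_0}\theta\in\HH^2(\Omega_0,\R^2)$ vanishes on $\Gamma_{\mathrm{D}}$, the new $v$ ranges precisely over $\mathcal{K}_0 - \nabla{\theta}^{\top}u_0 - \nabla{u_0}\theta$ and the increments $v - \overline{u}'_0$ become $v - u'_0$ (in $\Omega_0$ and on $\Gamma_{\mathrm{T}_0}$). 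Correspondingly, the defining constraints of $\mathcal{K}_0 - \nabla{\theta}^{\top}u_0$, shifted by $-\nabla{u_0}\theta$, turn into ${u'_0}_\tau = W(\theta)_\tau$ on $\Gamma_{\mathrm{T_0}^{u_0,g}_{\mathrm{D}}}$ and $({u'_0}_\tau - W(\theta)_\tau)\in\R_{-}\frac{\sigma_\tau(u_0)}{g}$ on $\Gamma_{\mathrm{T_0}^{u_0,g}_{\mathrm{S}}}$, with $W(\theta) = -\nabla{\theta}^{\top}u_0 - \nabla{u_0}\theta$, while the terms carrying $p(\theta)$ are unchanged ($p(\theta)$ not involving $u_0$).

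Expanding $\dual{u'_0 + \nabla{u_0}\theta}{v - u'_0}_{\HH^{1}_{\mathrm{D}}(\Omega_0,\R^2)}$ and integrating the extra contribution $\int_{\Omega_0}\mathrm{A}\mathrm{e}(\nabla{u_0}\theta):\mathrm{e}(v - u'_0)$ by parts (divergence formula, Proposition~\ref{div}, licit thanks to $u_0\in\HH^3(\Omega_0,\R^2)$), the two interior integrals against $\mathrm{div}(\mathrm{A}\mathrm{e}(\nabla{u_0}\theta))$ cancel, and one is left with a variational inequality for $u'_0$ over $\mathcal{K}_0 - \nabla{\theta}^{\top}u_0 - \nabla{u_0}\theta$ having the structure of~\eqref{formulderivmatH2} but with vanishing interior source and with the boundary datum $\xi^m(\theta)$ replaced by $\xi^m(\theta) - \mathrm{A}\mathrm{e}(\nabla{u_0}\theta)\nn$. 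It then remains to establish the $\LL^2(\Gamma_{\mathrm{T}_0},\R^2)$ identity $\xi^s(\theta) = \xi^m(\theta) - \mathrm{A}\mathrm{e}(\nabla{u_0}\theta)\nn$. For this I would work pointwise on $\Gamma_{\mathrm{T}_0}$, using the tangential--normal splitting $\nabla{u_0}\theta = (\theta\cdot\nn)\partial_{\nn}u_0 + \nabla_\tau u_0\,\theta_\tau$ on the boundary, the Tresca condition $\sigma_{\nn}(u_0) = 0$ on $\Gamma_{\mathrm{T}_0}$ (so that $\mathrm{A}\mathrm{e}(u_0)\nn = \sigma_\tau(u_0)$ there), and the tangential calculus of Appendix~\ref{appendix2} --- in particular the splitting $\mathrm{div}(\theta) = \mathrm{div}_\tau(\theta) + \partial_{\nn}\theta\cdot\nn$ on $\Gamma_0$, the expression of the derivative of a boundary field through its tangential derivatives and the second fundamental form, and the standard relations between full gradients, tangential gradients and normal derivatives on $\Gamma_0$. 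Expanding $\mathrm{A}\mathrm{e}(\nabla{u_0}\theta)\nn$ via this splitting and the symmetries of $\mathrm{A}$, and regrouping it against the three terms of $\xi^m(\theta)$, everything recombines into the Hadamard-type expression defining $\xi^s(\theta)$, with $\mathrm{div}(\theta)$ turning into $\mathrm{div}_\tau(\theta)$ precisely because of the $\partial_{\nn}\theta\cdot\nn$ contribution. The hypotheses $u_0\in\HH^3(\Omega_0,\R^2)$ and $\Gamma_0$ of class $\mathcal{C}^3$ are used exactly here: they make $\partial_{\nn}(\mathrm{A}\mathrm{e}(u_0)\nn)$, $\partial_{\nn}(\mathrm{A}\mathrm{e}(u_0))\nn$, $\nabla(\mathrm{A}\mathrm{e}(u_0)\nn)\theta$ and $\nabla_\tau(\theta\cdot\nn)$ well defined in $\LL^2(\Gamma_{\mathrm{T}_0},\R^2)$ and the tangential integrations by parts legitimate.

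With this identity in hand, the variational inequality obtained above is exactly the weak formulation (in the sense of~\cite[Section~2.1.2]{BCJDC2}) of the tangential Signorini problem in the statement --- vanishing interior source, boundary datum $\xi^s(\theta)$, obstacle datum $W(\theta)$, and the same decomposition $\Gamma_{\mathrm{T}_0} = \Gamma_{\mathrm{T_0}^{u_0,g}_{\mathrm{N}}}\cup\Gamma_{\mathrm{T_0}^{u_0,g}_{\mathrm{D}}}\cup\Gamma_{\mathrm{T_0}^{u_0,g}_{\mathrm{S}}}$ --- so that uniqueness of its weak solution concludes the proof. The main obstacle is plainly the boundary identity $\xi^s(\theta) = \xi^m(\theta) - \mathrm{A}\mathrm{e}(\nabla{u_0}\theta)\nn$: it is the one genuinely computational step, where the tangential differential geometry and the extra $\mathcal{C}^3$/$\HH^3$-regularity actually enter, and where one must carefully track the curvature contributions and $\mathrm{div}_\tau$ versus $\mathrm{div}$.
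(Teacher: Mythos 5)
Your proposal is correct, and it reaches the corollary by a genuinely different route than the paper. The paper never isolates the relation between $\xi^m(\theta)$ and $\xi^s(\theta)$: starting from the variational inequality of Theorem~\ref{materialderiv1}, it converts \emph{all} volume terms into boundary integrals by means of the pointwise volume identity
$\left(\left(\mathrm{A}\mathrm{e}(u_0)\right)\nabla{\theta}^{\top}+\mathrm{A}\left(\nabla{u_0}\nabla{\theta}\right)-\mathrm{A}\mathrm{e}\left(\nabla{u_0}\theta\right)\right):\nabla{\varphi}+\theta\cdot\nabla{\left(\mathrm{A}\mathrm{e}(u_0):\mathrm{e}(\varphi)\right)}-\mathrm{A}\mathrm{e}(u_0):\nabla{\left(\nabla{\varphi}\theta\right)}=0$
together with the divergence formula, then reshapes the resulting boundary integrals with the \emph{integral} tangential identities of Propositions~\ref{intbord} and~\ref{beltrami} (the mean curvature $H$ appears and cancels along the way), and concludes by density of $\mathcal{C}^{\infty}(\overline{\Omega_{0}},\R^2)$ in $\HH^1(\Omega_0,\R^{2})$. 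You instead cancel the interior source of Corollary~\ref{materialderiv2} directly against the integration by parts of $\int_{\Omega_0}\mathrm{A}\mathrm{e}(\nabla{u_0}\theta):\mathrm{e}(v-u'_0)$ and reduce the whole corollary to the single pointwise boundary identity $\xi^s(\theta)=\xi^m(\theta)-\mathrm{A}\mathrm{e}(\nabla{u_0}\theta)\nn$ in $\LL^2(\Gamma_{\mathrm{T}_0},\R^2)$. That identity is true, so your route is viable and arguably cleaner: it needs neither the curvature-bearing integral identities nor the density step. The one caveat is that this identity is the entire substance of the proof and you only sketch it; it should be carried out explicitly. It does go through: writing $\mathrm{A}\mathrm{e}(\nabla{u_0}\theta)=\mathrm{A}(\nabla{u_0}\nabla{\theta})+(\theta\cdot\nabla)\left(\mathrm{A}\mathrm{e}(u_0)\right)$ (constant coefficients of $\mathrm{A}$), observing that the combination $\theta\cdot\nn\,\partial_{\nn}\left(\mathrm{A}\mathrm{e}(u_0)\nn\right)-\nabla{\left(\mathrm{A}\mathrm{e}(u_0)\nn\right)}\theta=-\nabla{\left(\mathrm{A}\mathrm{e}(u_0)\nn\right)}\theta_{\tau}$ involves only tangential derivatives of $\mathrm{A}\mathrm{e}(u_0)\nn$ (so is extension-independent), and decomposing $\nabla{\theta}^{\top}\nn$, $\mathrm{div}(\theta)-\mathrm{div}_{\tau}(\theta)$ and $\nabla_{\tau}(\theta\cdot\nn)$ in the $(\tau,\nn)$ frame, one finds that the curvature contributions coming from $\nabla_{\tau}(\theta\cdot\nn)$ and from the tangential derivative of $\nn$ inside $\nabla{\left(\mathrm{A}\mathrm{e}(u_0)\nn\right)}\theta_{\tau}$ cancel each other, and the remaining terms vanish pairwise; note that the Tresca condition $\sigma_{\nn}(u_0)=0$ is not actually needed for this identity. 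The rest of your plan (interior equation, translation of the admissible set yielding $W(\theta)$, invariance of the $p(\theta)$ terms, uniqueness of the weak solution) matches the paper.
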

\begin{proof}
Since $u'_{0}:=\overline{u}'_0-\nabla{u_0}\theta$, one deduces from the weak formulation of $\overline{u}'_0$ and the divergence formula (see Proposition~\ref{div}) that
\begin{multline*}
\dual{u'_0}{v-u'_0}_{\HH^{1}_{\mathrm{D}}(\Omega_0,\R^2)}\geq \\ \int_{\Omega_0}\left(\mathrm{div}\left(\mathrm{A}\mathrm{e}(u_0)\right)\theta^{\top}+\left(\mathrm{A}\mathrm{e}(u_0)\right)\nabla{\theta}^{\top}+\mathrm{A}\left(\nabla{u_0}\nabla{\theta}\right)-\mathrm{A}\mathrm{e}\left({\nabla{u_0}\theta}\right)\right):\nabla{\left(v-u'_{0}\right)}\\-\int_{\Omega_0}\mathrm{div}(\theta)\mathrm{A}\mathrm{e}(u_0):\mathrm{e}\left(v-u'_{0}\right)+\int_{\Gamma_{\mathrm{T}_0}}\mathrm{A}\mathrm{e}(u_0)\nn\cdot\nabla{\theta}^{\top}\left(v-u'_{0}\right)-\int_{\Gamma_{0}}\left(\theta\cdot\nn\right) \mathrm{div}\left(\mathrm{A}\mathrm{e}(u_0)\right)\cdot\left(v-u'_{0}\right)\\-\int_{\Gamma_{\mathrm{T_0}^{u_0,g}_{\mathrm{N}}}}p(\theta)\frac{u_{0_\tau}}{\left\|u_{0_\tau}\right\|}\cdot \left(v_\tau-u'_{0_\tau}\right)+\int_{\Gamma_{\mathrm{T_0}^{u_0,g}_{\mathrm{S}}}}p(\theta)\frac{\sigma_{\tau}(u_0)}{g}\cdot \left(v_\tau-u'_{0_\tau}\right),
\end{multline*}
for all $v\in\mathcal{K}_0-\nabla{\theta}^{\top}u_0-\nabla{u_0}\theta$. Moreover, one has
$$\int_{\Omega_0}\mathrm{div}\left(\mathrm{A}\mathrm{e}(u_0)\right)\theta^{\top}:\nabla{\varphi}=\int_{\Omega_0}\mathrm{div}\left(\mathrm{A}\mathrm{e}(u_0)\right)\cdot\nabla{\varphi}\theta=-\int_{\Omega_0}\mathrm{A}\mathrm{e}(u_0):\nabla{\left(\nabla{\varphi}\theta\right)}+\int_{\Gamma_0}\mathrm{A}\mathrm{e}(u_0)\nn\cdot\nabla{\varphi}\theta,
$$
and also
$$
-\int_{\Omega_0}\mathrm{div}(\theta)\mathrm{A}\mathrm{e}(u_0):\mathrm{e}(\varphi)=\int_{\Omega_0}\theta\cdot\nabla{\left(\mathrm{A}\mathrm{e}(u_0):\mathrm{e}(\varphi)\right)}-\int_{\Gamma_0}\theta\cdot\nn\left(\mathrm{A}\mathrm{e}(u_0):\mathrm{e}(\varphi)\right),
$$
for all $\varphi\in\mathcal{C}^{\infty}(\overline{\Omega_{0}},\R^2)$.
Therefore, using the equality
$$
\left(\left(\mathrm{A}\mathrm{e}(u_0)\right)\nabla{\theta}^{\top}+\mathrm{A}\left(\nabla{u_0}\nabla{\theta}\right)-\mathrm{A}\mathrm{e}\left(\nabla{u_0}\theta\right)\right):\nabla{\varphi}+\theta\cdot\nabla{\left(\mathrm{A}\mathrm{e}(u_0):\mathrm{e}(\varphi)\right)}-\mathrm{A}\mathrm{e}(u_0):\nabla{\left(\nabla{\varphi}\theta\right)}=0,
$$ which holds true \textit{a.e.}\ on $\Omega_0$, one deduces from the divergence formula that
\begin{multline*}
\int_{\Omega_0}\left(\mathrm{div}\left(\mathrm{A}\mathrm{e}(u_0)\right)\theta^{\top}+\left(\mathrm{A}\mathrm{e}(u_0)\right)\nabla{\theta}^{\top}+\mathrm{A}\left(\nabla{u_0}\nabla{\theta}\right)-\mathrm{A}\mathrm{e}\left(\nabla{u_0}\theta\right)\right):\nabla{\varphi}\\-\int_{\Omega_0}\mathrm{div}(\theta)\mathrm{A}\mathrm{e}(u_0):\mathrm{e}\left(\varphi\right)+\int_{\Gamma_0}\nabla{\theta}\left(\mathrm{A}\mathrm{e}(u_0)\nn\right)\cdot \varphi-\int_{\Gamma_{0}}\left(\theta\cdot\nn\right) \mathrm{div}\left(\mathrm{A}\mathrm{e}(u_0)\right)\cdot \varphi-\int_{\Gamma_{\mathrm{T_0}^{u_0,g}_{\mathrm{N}}}}p(\theta)\frac{u_{0_\tau}}{\left\|u_{0_\tau}\right\|}\cdot \varphi_\tau\\+\int_{\Gamma_{\mathrm{T_0}^{u_0,g}_{\mathrm{S}}}}p(\theta)\frac{\sigma_{\tau}(u_0)}{g}\cdot \varphi_\tau=\int_{\Gamma_0}\theta\cdot\nn\left(-\mathrm{A}\mathrm{e}(u_0):\mathrm{e}(\varphi)-\mathrm{div}\left(\mathrm{A}\mathrm{e}(u_0)\right)\cdot \varphi\right)\\+\int_{\Gamma_0}\nabla{\varphi}^{\top}(\mathrm{A}\mathrm{e}(u_0)\nn)\cdot\theta+\nabla{\theta}(\mathrm{A}\mathrm{e}(u_0)\nn)\cdot \varphi-\int_{\Gamma_{\mathrm{T_0}^{u_0,g}_{\mathrm{N}}}}p(\theta)\frac{u_{0_\tau}}{\left\|u_{0_\tau}\right\|}\cdot \varphi_\tau+\int_{\Gamma_{\mathrm{T_0}^{u_0,g}_{\mathrm{S}}}}p(\theta)\frac{\sigma_{\tau}(u_0)}{g}\cdot \varphi_\tau,
\end{multline*}
for all $\varphi\in\mathcal{C}^{\infty}(\overline{\Omega_{0}},\R^2)$. Furthermore, since $\Gamma_0$ is of class $\mathcal{C}^{3}$ and $u_0\in\HH^3(\Omega_0,\R^2)$, $\mathrm{A}\mathrm{e}(u_0)\nn$ can be extended into a function defined in $\Omega_{0}$ such that~$\mathrm{A}\mathrm{e}(u_0)\nn\in\HH^{2}(\Omega_{0},\R^2)$. Thus, it holds that~$\mathrm{A}\mathrm{e}(u_0)\nn\cdot \varphi\in\mathrm{W}^{2,1}(\Omega_{0},\R^2)$, for all $\varphi\in\mathcal{C}^{\infty}(\overline{\Omega_{0}},\R^2)$, and one can use Proposition~\ref{intbord} to get that
\begin{multline*}
\int_{\Gamma_0}\theta\cdot\nn\left(-\mathrm{A}\mathrm{e}(u_0):\mathrm{e}(\varphi)-\mathrm{div}\left(\mathrm{A}\mathrm{e}(u_0)\right)\cdot \varphi\right)+\int_{\Gamma_0}\nabla{\varphi}^{\top}(\mathrm{A}\mathrm{e}(u_0)\nn)\cdot\theta+\nabla{\theta}(\mathrm{A}\mathrm{e}(u_0)\nn)\cdot \varphi\\-\int_{\Gamma_{\mathrm{T_0}^{u_0,g}_{\mathrm{N}}}}p(\theta)\frac{u_{0_\tau}}{\left\|u_{0_\tau}\right\|}\cdot \varphi_\tau+\int_{\Gamma_{\mathrm{T_0}^{u_0,g}_{\mathrm{S}}}}p(\theta)\frac{\sigma_{\tau}(u_0)}{g}\cdot \varphi_\tau\\=\int_{\Gamma_0}\theta\cdot\nn\left(-\mathrm{A}\mathrm{e}(u_0):\mathrm{e}(\varphi)-\mathrm{div}\left(\mathrm{A}\mathrm{e}(u_0)\right)\cdot \varphi+\partial_{\nn}\left(\mathrm{A}\mathrm{e}(u_0)\nn\cdot \varphi\right)+H\mathrm{A}\mathrm{e}(u_0)\nn\cdot \varphi\right)
\\-\int_{\Gamma_0}\left(\nabla{\left(\mathrm{A}\mathrm{e}(u_0)\nn\right)}\theta-\nabla{\theta}(\mathrm{A}\mathrm{e}(u_0)\nn)+\mathrm{div}_{\tau}(\theta)\mathrm{A}\mathrm{e}(u_0)\nn\right)\cdot \varphi-\int_{\Gamma_{\mathrm{T_0}^{u_0,g}_{\mathrm{N}}}}p(\theta)\frac{u_{0_\tau}}{\left\|u_{0_\tau}\right\|}\cdot \varphi_\tau\\+\int_{\Gamma_{\mathrm{T_0}^{u_0,g}_{\mathrm{S}}}}p(\theta)\frac{\sigma_{\tau}(u_0)}{g}\cdot \varphi_\tau,
\end{multline*}
where $H$ is the mean curvature of $\Gamma_{0}$. By Proposition~\ref{beltrami} it follows that
\begin{equation*}
\int_{\Gamma_0}\theta\cdot\nn\left(-\mathrm{div}\left(\mathrm{A}\mathrm{e}(u_0)\right) +H\mathrm{A}\mathrm{e}(u_0)\nn\right)\cdot \varphi = \int_{\Gamma_0}\mathrm{A}\mathrm{e}(u_0):\nabla_{\tau}\left(\varphi\left(\theta\cdot\nn\right)\right)-\left(\theta\cdot\nn\right)\partial_{\nn}\left(\mathrm{A}\mathrm{e}(u_0)\right)\nn\cdot \varphi,
\end{equation*}
for all~$\varphi\in\mathcal{C}^{\infty}(\overline{\Omega_{0}},\R^2)$. Therefore, using the following two equalities
$$\mathrm{A}\mathrm{e}(u_0):\nabla_{\tau}\left(\varphi\left(\theta\cdot\nn\right)\right)=\theta\cdot\nn\left(\mathrm{A}\mathrm{e}(u_0):\nabla_{\tau}\varphi\right)+\mathrm{A}\mathrm{e}(u_0)\nabla_{\tau}\left(\theta\cdot\nn\right)\cdot \varphi, \text{ \textit{a.e.}\ on } \Gamma_0,
$$
and
$$\mathrm{A}\mathrm{e}(u_0):\nabla_{\tau}\varphi=\mathrm{A}\mathrm{e}(u_0):e(\varphi)-\nabla{\varphi}^{\top}(\mathrm{A}\mathrm{e}(u_0)\nn)\cdot\nn \text{ \textit{a.e} on } \Gamma_0,
$$ 
one gets
\begin{multline*}
    \int_{\Gamma_0}\theta\cdot\nn\left(-\mathrm{A}\mathrm{e}(u_0):\mathrm{e}(\varphi)-\mathrm{div}\left(\mathrm{A}\mathrm{e}(u_0)\right)\cdot \varphi+\partial_{\nn}\left(\mathrm{A}\mathrm{e}(u_0)\nn\cdot \varphi\right)+H\mathrm{A}\mathrm{e}(u_0)\nn\cdot \varphi\right)
\\-\int_{\Gamma_0}\left(\nabla{\left(\mathrm{A}\mathrm{e}(u_0)\nn\right)}\theta-\nabla{\theta}(\mathrm{A}\mathrm{e}(u_0)\nn)+\mathrm{div}_{\tau}(\theta)\mathrm{A}\mathrm{e}(u_0)\nn\right)\cdot \varphi -\int_{\Gamma_{\mathrm{T_0}^{u_0,g}_{\mathrm{N}}}}p(\theta)\frac{u_{0_\tau}}{\left\|u_{0_\tau}\right\|}\cdot \varphi_\tau\\+\int_{\Gamma_{\mathrm{T_0}^{u_0,g}_{\mathrm{S}}}}p(\theta)\frac{\sigma_{\tau}(u_0)}{g}\cdot \varphi_\tau =\int_{\Gamma_0}\left(\theta \cdot \nn\left(\partial_{\nn}\left(\mathrm{A}\mathrm{e}(u_0)\nn\right)-\partial_{\nn}\left(\mathrm{A}\mathrm{e}(u_0)\right)\nn\right)+\mathrm{A}\mathrm{e}(u_0)\nabla_{\tau}\left(\theta\cdot\nn\right)\right)\cdot \varphi\\
+\int_{\Gamma_0}\left(-\nabla{\left(\mathrm{A}\mathrm{e}(u_0)\nn\right)}\theta+\left(\nabla{\theta}-\mathrm{div}_{\tau}(\theta)\mathrm{I}\right)\mathrm{A}\mathrm{e}(u_0)\nn\right)\cdot \varphi-\int_{\Gamma_{\mathrm{T_0}^{u_0,g}_{\mathrm{N}}}}p(\theta)\frac{u_{0_\tau}}{\left\|u_{0_\tau}\right\|}\cdot \varphi_\tau+\int_{\Gamma_{\mathrm{T_0}^{u_0,g}_{\mathrm{S}}}}p(\theta)\frac{\sigma_{\tau}(u_0)}{g}\cdot \varphi_\tau,
\end{multline*}
and thus
\begin{multline*}
    \int_{\Omega_0}\left(\mathrm{div}\left(\mathrm{A}\mathrm{e}(u_0)\right)\theta^{\top}+\left(\mathrm{A}\mathrm{e}(u_0)\right)\nabla{\theta}^{\top}+\mathrm{A}\left(\nabla{u_0}\nabla{\theta}\right)-\mathrm{A}\mathrm{e}\left({\nabla{u_0}\theta}\right)\right):\nabla{\varphi}-\int_{\Omega_0}\mathrm{div}(\theta)\mathrm{A}\mathrm{e}(u_0):\mathrm{e}\left(\varphi\right)\\+\int_{\Gamma_{\mathrm{T}_0}}\mathrm{A}\mathrm{e}(u_0)\nn\cdot\nabla{\theta}^{\top}\varphi-\int_{\Gamma_{0}}\left(\theta\cdot\nn\right) \mathrm{div}\left(\mathrm{A}\mathrm{e}(u_0)\right)\cdot\varphi-\int_{\Gamma_{\mathrm{T_0}^{u_0,g}_{\mathrm{N}}}}p(\theta)\frac{u_{0_\tau}}{\left\|u_{0_\tau}\right\|}\cdot \varphi_\tau+\int_{\Gamma_{\mathrm{T_0}^{u_0,g}_{\mathrm{S}}}}p(\theta)\frac{\sigma_{\tau}(u_0)}{g}\cdot \varphi_\tau \\=\int_{\Gamma_0}\left(\theta \cdot \nn\left(\partial_{\nn}\left(\mathrm{A}\mathrm{e}(u_0)\nn\right)-\partial_{\nn}\left(\mathrm{A}\mathrm{e}(u_0)\right)\nn\right)+\mathrm{A}\mathrm{e}(u_0)\nabla_{\tau}\left(\theta\cdot\nn\right)\right)\cdot \varphi\\
+\int_{\Gamma_0}\left(-\nabla{\left(\mathrm{A}\mathrm{e}(u_0)\nn\right)}\theta+\left(\nabla{\theta}-\mathrm{div}_{\tau}(\theta)\mathrm{I}\right)\mathrm{A}\mathrm{e}(u_0)\nn\right)\cdot \varphi-\int_{\Gamma_{\mathrm{T_0}^{u_0,g}_{\mathrm{N}}}}p(\theta)\frac{u_{0_\tau}}{\left\|u_{0_\tau}\right\|}\cdot \varphi_\tau+\int_{\Gamma_{\mathrm{T_0}^{u_0,g}_{\mathrm{S}}}}p(\theta)\frac{\sigma_{\tau}(u_0)}{g}\cdot \varphi_\tau,
\end{multline*}
for all~$\varphi\in\mathcal{C}^{\infty}(\overline{\Omega_{0}},\R^2)$. Finally, one deduces from the density of $\mathcal{C}^{\infty}(\overline{\Omega_{0}},\R^2)$ in $\HH^1(\Omega_0,\R^{2})$ that 
\begin{multline*}
\dual{u'_{0}}{v-u'_{0} }_{\HH^{1}_{\mathrm{D}}(\Omega_0,\R^2)}\geq \int_{\Gamma_{\mathrm{T}_0}}\left(\theta \cdot \nn\left(\partial_{\nn}\left(\mathrm{A}\mathrm{e}(u_0)\nn\right)-\partial_{\nn}\left(\mathrm{A}\mathrm{e}(u_0)\right)\nn\right)+\mathrm{A}\mathrm{e}(u_0)\nabla_{\tau}\left(\theta\cdot\nn\right)\right)\cdot \left(v-u'_{0}\right)\\
+\int_{\Gamma_{\mathrm{T}_0}}\left(-\nabla{\left(\mathrm{A}\mathrm{e}(u_0)\nn\right)}\theta+\left(\nabla{\theta}-\mathrm{div}_{\tau}(\theta)\mathrm{I}\right)\mathrm{A}\mathrm{e}(u_0)\nn\right)\cdot \left(v-u'_{0}\right)-\int_{\Gamma_{\mathrm{T_0}^{u_0,g}_{\mathrm{N}}}}p(\theta)\frac{u_{0_\tau}}{\left\|u_{0_\tau}\right\|}\cdot \left(v_\tau-{u'_{0}}_\tau\right)\\+\int_{\Gamma_{\mathrm{T_0}^{u_0,g}_{\mathrm{S}}}}p(\theta)\frac{\sigma_{\tau}(u_0)}{g}\cdot \left(v_\tau-{u'_{0}}_\tau\right),
\end{multline*}
for all $v\in\mathcal{K}_0-\nabla{\theta}^{\top}u_0-\nabla{u_0}\theta$, which corresponds to the weak formulation of the expected tangential Signorini problem (see~\cite[Section~2.1.2]{BCJDC2} for details).
\end{proof}

\begin{myRem}\label{remdirectional} \normalfont
    Note that $\overline{u}'_0$ and $u'_0$ are not linear with respect to the direction~$\theta$. This nonlinearity is standard in shape optimization for variational inequalities (see, e.g.,~\cite{ABCJ,HINTERMULLERLAURAIN} or~\cite[Section 4]{SOKOZOL}), and justifies the names of \textit{directional} material and shape derivatives.
\end{myRem}

\subsection{Shape gradient of the Tresca energy functional}\label{energyfunctional}

Thanks to the characterization of the directional material and shape derivatives obtained in the previous section, we are now in a position to derive an expression of the shape gradient of the Tresca energy functional~$\mathcal{J}$ at $\Omega_{0}$ in the direction~$\theta$.

\begin{myTheorem}\label{shapederivofJTresca1}
Consider the framework of Proposition~\ref{deriv11}. Then the Tresca energy functional~$\mathcal{J}$ admits a shape gradient at $\Omega_{0}$ in the direction~$\theta$ given by
\begin{multline*}
    \mathcal{J}'(\Omega_{0})(\theta)=\int_{\Omega_0}\mathrm{div}\left(\theta\right)\frac{\mathrm{A}\mathrm{e}\left(u_0\right):\mathrm{e}\left(u_0\right)}{2}-\int_{\Omega_0}\mathrm{div}\left(\mathrm{A}\mathrm{e}\left(u_0\right)\right)\cdot\nabla{u_0}\theta-\int_{\Omega_0}\mathrm{A}\mathrm{e}\left(u_0\right):\nabla{u_0}\nabla{\theta}\\-\int_{\Gamma_{\mathrm{T}_0}}\theta\cdot\nn \left(f\cdot u_0\right)-\dual{\mathrm{A}\mathrm{e}(u_0)\nn}{\nabla{\theta}^{\top}u_0}_{\HH^{-1/2}(\Gamma_0,\R^2)\times\HH^{1/2}(\Gamma_0,\R^2)}\\+\int_{\Gamma_{\mathrm{T_0}^{u_0,g}_{\mathrm{N}}}}\left(\nabla{g}\cdot\theta+g\left( \mathrm{div}_{\tau}(\theta)-\nabla{\theta}\tau\cdot\tau\right)\right)\left\|u_{0_\tau}\right\|.
\end{multline*}
\end{myTheorem}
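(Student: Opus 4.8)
The plan is to exploit the classical energy identity for Tresca's friction problem. First I would observe that, for every admissible $\Omega$, one has
$$\mathcal{J}(\Omega)=-\tfrac12\int_{\Omega}\mathrm{A}\mathrm{e}(u_\Omega):\mathrm{e}(u_\Omega)=-\tfrac12\dual{u_\Omega}{u_\Omega}_{\HH^{1}_{\mathrm{D}}(\Omega,\R^2)}.$$
Indeed, taking $v=0$ and $v=2u_\Omega$ in the variational inequality characterizing $u_\Omega$ and using that the Tresca friction functional $\phi_\Omega$ is positively $1$-homogeneous yields $\int_{\Omega}\mathrm{A}\mathrm{e}(u_\Omega):\mathrm{e}(u_\Omega)+\phi_\Omega(u_\Omega)=\int_{\Omega}f\cdot u_\Omega$, and substituting this into the definition of $\mathcal{J}$ gives the claim. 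Hence, after the change of variables $\mathrm{id}+t\theta$, and writing $a_t$ for the (symmetric) parameterized bilinear form $a_t(\overline{v},\overline{w}):=\int_{\Omega_0}\mathrm{J}_{t}\mathrm{A}[\nabla{\overline{v}}(\mathrm{I}+t\nabla{\theta})^{-1}]:\nabla{\overline{w}}(\mathrm{I}+t\nabla{\theta})^{-1}$, one gets $\mathcal{J}(\Omega_t)=-\tfrac12\,a_t(\overline{u}_t,\overline{u}_t)$. Since $t\mapsto\overline{u}_t$ is differentiable at $t=0$ with derivative $\overline{u}'_0$ (Theorem~\ref{materialderiv1}), and $t\mapsto a_t(v,w)$ is differentiable at $t=0$ thanks to~\ref{diff11} and~\ref{diff12}, expanding the bilinear form and controlling the cross terms (they are $o(t)$ since $\overline{u}_t-u_0=O(t)$) leads to
$$\mathcal{J}'(\Omega_0)(\theta)=-\tfrac12\dot{a}_0(u_0,u_0)-a_0(u_0,\overline{u}'_0)=-\tfrac12\dot{a}_0(u_0,u_0)-\dual{u_0}{\overline{u}'_0}_{\HH^{1}_{\mathrm{D}}(\Omega_0,\R^2)},$$
where $\dot{a}_0(u_0,u_0)$ denotes the derivative at $t=0$ of the explicit $t$-dependence of $t\mapsto a_t(u_0,u_0)$.

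Next I would compute the two pieces. For the first, using~\ref{diff11},~\ref{diff12} and the symmetries of $\mathrm{A}$ (both $\mathrm{A}\mathrm{e}(v)=\mathrm{A}\nabla{v}$ and the major symmetry $\mathrm{A}\mathrm{B}:\mathrm{C}=\mathrm{B}:\mathrm{A}\mathrm{C}$), one obtains $\dot{a}_0(u_0,u_0)=\int_{\Omega_0}\mathrm{div}(\theta)\,\mathrm{A}\mathrm{e}(u_0):\mathrm{e}(u_0)-2\int_{\Omega_0}\mathrm{A}\mathrm{e}(u_0):\nabla{u_0}\nabla{\theta}$. For the second, the key point is to evaluate $\dual{u_0}{\overline{u}'_0}$ \emph{exactly}. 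I would use $\overline{u}'_0=\overline{\overline{u}}'_0-\nabla{\theta}^{\top}u_0$ (Theorem~\ref{materialderiv1}) and then notice that $\pm u_0\in\mathcal{K}_0$: by definition of $\Gamma_{\mathrm{T_0}^{u_0,g}_{\mathrm{D}}}$ and $\Gamma_{\mathrm{T_0}^{u_0,g}_{\mathrm{S}}}$ one has $u_{0_\tau}=0$ on both sets and $0\in\R_-\frac{\sigma_\tau(u_0)}{g}$, hence $\overline{\overline{u}}'_0\pm u_0\in\mathcal{K}_0$ as well. Plugging $\varphi=\overline{\overline{u}}'_0+u_0$ and then $\varphi=\overline{\overline{u}}'_0-u_0$ into the variational inequality of Proposition~\ref{deriv11} (whose right-hand side is linear in $\varphi-\overline{\overline{u}}'_0$) yields the equality $\dual{u_0}{\overline{\overline{u}}'_0}_{\HH^{1}_{\mathrm{D}}(\Omega_0,\R^2)}$ equals that right-hand side evaluated at $u_0$; in this evaluation the integral over $\Gamma_{\mathrm{T_0}^{u_0,g}_{\mathrm{S}}}$ vanishes ($u_{0_\tau}=0$ there) and the one over $\Gamma_{\mathrm{T_0}^{u_0,g}_{\mathrm{N}}}$ collapses to $-\int_{\Gamma_{\mathrm{T_0}^{u_0,g}_{\mathrm{N}}}}p(\theta)\norm{u_{0_\tau}}$. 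Subtracting $\dual{u_0}{\nabla{\theta}^{\top}u_0}_{\HH^{1}_{\mathrm{D}}(\Omega_0,\R^2)}$ then cancels exactly the term $\int_{\Omega_0}\mathrm{A}\mathrm{e}(\nabla{\theta}^{\top}u_0):\nabla{u_0}$ appearing in that right-hand side, these two quantities being equal by the symmetry of $\mathrm{A}$.

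It then remains to assemble the pieces. Using once more the major symmetry one has $\int_{\Omega_0}((\mathrm{A}\mathrm{e}(u_0))\nabla{\theta}^{\top}+\mathrm{A}(\nabla{u_0}\nabla{\theta})):\nabla{u_0}=2\int_{\Omega_0}\mathrm{A}\mathrm{e}(u_0):\nabla{u_0}\nabla{\theta}$, so the $\mathrm{div}(\theta)$-contributions coming from $-\tfrac12\dot{a}_0(u_0,u_0)$ and from $-\dual{u_0}{\overline{u}'_0}$ add up to $\tfrac12\int_{\Omega_0}\mathrm{div}(\theta)\,\mathrm{A}\mathrm{e}(u_0):\mathrm{e}(u_0)$, the $\mathrm{A}\mathrm{e}(u_0):\nabla{u_0}\nabla{\theta}$-contributions add up to $-\int_{\Omega_0}\mathrm{A}\mathrm{e}(u_0):\nabla{u_0}\nabla{\theta}$, and one is left in addition with the boundary term $-\dual{\mathrm{A}\mathrm{e}(u_0)\nn}{\nabla{\theta}^{\top}u_0}_{\HH^{-1/2}(\Gamma_0,\R^2)\times\HH^{1/2}(\Gamma_0,\R^2)}$, the friction term $\int_{\Gamma_{\mathrm{T_0}^{u_0,g}_{\mathrm{N}}}}p(\theta)\norm{u_{0_\tau}}$ (recall $p(\theta)=\nabla{g}\cdot\theta+g(\mathrm{div}_{\tau}(\theta)-\nabla{\theta}\tau\cdot\tau)$, which gives precisely the last term of the announced formula since $\norm{u_{0_\tau}}=0$ off $\Gamma_{\mathrm{T_0}^{u_0,g}_{\mathrm{N}}}$), and the volume term $\int_{\Omega_0}\mathrm{div}(\mathrm{div}(\mathrm{A}\mathrm{e}(u_0))\theta^{\top})\cdot u_0$. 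For this last term I would use the equilibrium equation $\mathrm{div}(\mathrm{A}\mathrm{e}(u_0))=-f$ in $\LL^2(\Omega_0,\R^2)$, compute $\mathrm{div}(f\theta^{\top})=\nabla{f}\theta+f\,\mathrm{div}(\theta)$, and apply the divergence formula (Proposition~\ref{div}) together with $\theta=0$ on $\Gamma_{\mathrm{D}}$ to rewrite it as $-\int_{\Omega_0}\mathrm{div}(\mathrm{A}\mathrm{e}(u_0))\cdot\nabla{u_0}\theta-\int_{\Gamma_{\mathrm{T}_0}}\theta\cdot\nn\,(f\cdot u_0)$; substituting produces exactly the remaining terms of the stated expression of $\mathcal{J}'(\Omega_0)(\theta)$.

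I expect the only genuinely delicate step to be the justification of the chain rule $\mathcal{J}'(\Omega_0)(\theta)=-\tfrac12\dot{a}_0(u_0,u_0)-\dual{u_0}{\overline{u}'_0}$, i.e. checking that the cross terms of the form $[a_t-a_0](\overline{u}_t-u_0,\cdot)$ and $a_0(\overline{u}_t-u_0,\overline{u}_t-u_0)$ are $o(t)$; this relies on the $\LL^\infty$-differentiability of the coefficients of $a_t$ (items~\ref{diff11} and~\ref{diff12}), on $\norm{\overline{u}_t-u_0}_{\HH^{1}_{\mathrm{D}}(\Omega_0,\R^2)}=O(t)$ coming from the differentiability established in Theorem~\ref{materialderiv1}, and on the boundedness of $t\mapsto\norm{\overline{u}_t}_{\HH^{1}_{\mathrm{D}}(\Omega_0,\R^2)}$ near $t=0$ (which follows from that same differentiability, and is in any case already contained in the estimates of Lemma~\ref{lem876}). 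Everything else is careful bookkeeping with the tensor identities for $\mathrm{A}$ and with the divergence formula.
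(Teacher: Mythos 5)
Your proposal is correct and follows essentially the same route as the paper's proof: both reduce $\mathcal{J}(\Omega_t)$ to $-\tfrac12\int_{\Omega_t}\mathrm{A}\mathrm{e}(u_t):\mathrm{e}(u_t)$, differentiate after transport to the fixed domain to obtain $\mathcal{J}'(\Omega_0)(\theta)=-\tfrac12\dot a_0(u_0,u_0)-\dual{u_0}{\overline{u}'_0}$, and then extract $\dual{u_0}{\overline{u}'_0}$ by testing the material-derivative variational inequality with the admissible perturbations $\pm u_0$ (you do this at the level of $\overline{\overline{u}}'_0$ via Proposition~\ref{deriv11} and then subtract $\dual{u_0}{\nabla\theta^{\top}u_0}$, which is equivalent to the paper's direct use of Theorem~\ref{materialderiv1}). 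Your treatment is if anything more explicit than the paper's on the two delicate points: the justification of the energy identity via $v=0$ and $v=2u_\Omega$ with positive $1$-homogeneity, and the $o(t)$ control of the cross terms in the chain rule.
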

\begin{proof}
    By taking~$v=u_t$ in the variational inequality satisfied by~$u_t$, one can obtain that
    $$ \mathcal{J}(\Omega_t) = - \dfrac{1}{2} \int_{ \Omega_t } \mathrm{A} \mathrm{e}(u_t) : \mathrm{e}(u_t). $$
    Following the usual strategy developed in (smooth) shape optimization literature (see, e.g.,~\cite{ALL,HENROT}) to compute the shape gradient of $\mathcal{J}$ at $\Omega_{0}$ in the direction $\theta$, one gets
\begin{equation*}
\mathcal{J}'(\Omega_{0})(\theta)=-\frac{1}{2}\int_{\Omega_{0}}\mathrm{div}(\theta)\mathrm{A}\mathrm{e}(u_0):\mathrm{e}(u_0)+\int_{\Omega_{0}}\mathrm{A}\mathrm{e}(u_0):\nabla{u_{0}}\nabla{\theta}-\dual{\overline{u}'_0}{u_{0}}_{_{\HH^{1}_{\mathrm{D}}(\Omega_0,\R^2)}}.
\end{equation*}
Moreover, from the variational inequality satisfied by $\overline{u}'_0$ (see~\eqref{inequalityofmaterialderiv1}), the divergence formula (see Proposition~\ref{div}) and since~$\overline{u}'_0\pm u_0\in\mathcal{K}_0-\nabla{\theta}^{\top}u_0$, it follows that
\begin{multline*}
    \dual{\overline{u}'_0}{u_0 }_{\HH^{1}_{\mathrm{D}}(\Omega_0,\R^2)}=\int_{\Omega_0}\left(\mathrm{div}\left(\mathrm{A}\mathrm{e}(u_0)\right)\theta^{\top}+\left(\mathrm{A}\mathrm{e}(u_0)\right)\nabla{\theta}^{\top}+\mathrm{A}\left(\nabla{u_0}\nabla{\theta}\right)-\mathrm{div}(\theta)\mathrm{A}\mathrm{e}(u_0)\right):\nabla{u_0}\\+\int_{\Gamma_{\mathrm{T}_0}}\theta\cdot\nn (f\cdot u_0)+\dual{\mathrm{A}\mathrm{e}(u_0)\nn}{\nabla{\theta}^{\top}u_0}_{\HH^{-1/2}(\Gamma_0,\R^2)\times\HH^{1/2}(\Gamma_0,\R^2)}-\int_{\Gamma_{\mathrm{T_0}^{u_0,g}_{\mathrm{N}}}}p(\theta)\left\|u_{0_\tau}\right\|.
\end{multline*}
Then, since ${u_0}_\tau=0$ \textit{a.e.}\ on $\Gamma_{\mathrm{T_0}^{u_0,g}_{\mathrm{S}}}$ and using the equality $\mathrm{div}\left(\mathrm{A}\mathrm{e}(u_0)\right)\theta^{\top}:\nabla{u_0}=\mathrm{div}\left(\mathrm{A}\mathrm{e}(u_0)\right)\cdot\nabla{u_0}\theta$ which holds true \textit{a.e.}\ on $\Omega_0$, we conclude the proof.
\end{proof}

As we did for the directional material derivative, the presentation of Theorem~\ref{shapederivofJTresca1} can be improved under additional assumptions.

\begin{myCor}\label{shapederivofJ}
Consider the framework of Proposition~\ref{deriv11} with the additional assumptions that~$u_0\in\HH^3(\Omega_0,\R^2)$, $\Gamma_0$ is of class $\mathcal{C}^3$ and almost every point of $\Gamma_{\mathrm{T_0}}$ belongs to the relative interior $\mathrm{int}_{\Gamma_0}({\Gamma_{\mathrm{T_0}}})$. Then the Tresca energy functional $\mathcal{J}$ admits a shape gradient at $\Omega_{0}$ in the direction~$\theta$ given by
\begin{multline*}
\mathcal{J}'(\Omega_{0})(\theta)=\int_{\Gamma_{\mathrm{T}_{0}}} \theta\cdot\nn\left(\frac{\mathrm{A}\mathrm{e}(u_0):\mathrm{e}(u_0)}{2}-f\cdot u_{0}-\sigma_{\tau}(u_0)\cdot\partial_{\nn}(u_0)+\left\|{u_0}_\tau \right\|\left(Hg+\partial_{\nn}g\right)\right)\\+\int_{\Gamma_{\mathrm{T_0}}}{u_0}_\nn\sigma_\tau (u_0) \cdot\tau\left(\nabla{\tau}\theta_{\tau}-\nabla{\theta}\tau\right)\cdot\nn,
\end{multline*}
where $H$ is the mean curvature of $\Gamma_{0}$.
\end{myCor}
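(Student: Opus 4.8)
The starting point is the expression of~$\mathcal{J}'(\Omega_{0})(\theta)$ provided by Theorem~\ref{shapederivofJTresca1}. The plan is to use the additional regularity~$u_{0}\in\HH^{3}(\Omega_{0},\R^2)$ and~$\Gamma_0 \in \mathcal{C}^{3}$ to transform all the volume integrals and the duality pairing appearing there into boundary integrals over~$\Gamma_{\mathrm{T}_0}$ (recall that~$\theta=0$ on~$\Gamma_{\mathrm{D}}$), and then to reorganize the resulting boundary contributions, by means of tangential calculus, into the announced Hadamard-type expression. First I would use the equilibrium equation~$-\mathrm{div}(\mathrm{A}\mathrm{e}(u_0))=f$ in~$\Omega_0$ (which holds in~$\LL^2(\Omega_0,\R^2)$, and even pointwise almost everywhere, since~$u_0\in\HH^3$) to replace~$-\int_{\Omega_0}\mathrm{div}(\mathrm{A}\mathrm{e}(u_0))\cdot\nabla u_0\,\theta$ by~$\int_{\Omega_0}f\cdot\nabla u_0\,\theta$. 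Then, since~$u_0\in\HH^3$ makes the integrands regular enough, I would apply the divergence formula (Proposition~\ref{div}) to the two remaining volume terms, integrating~$\int_{\Omega_0}\mathrm{div}(\theta)\,\tfrac12\mathrm{A}\mathrm{e}(u_0):\mathrm{e}(u_0)$ by parts against~$\theta$ (writing~$\mathrm{div}(\theta)\psi=\mathrm{div}(\psi\theta)-\nabla\psi\cdot\theta$) and treating~$\int_{\Omega_0}\mathrm{A}\mathrm{e}(u_0):\nabla u_0\,\nabla\theta$ similarly; using once more the equilibrium equation and the symmetry of~$\mathrm{A}$ to dispose of the divergence terms that arise, all volume contributions collapse into boundary integrals over~$\Gamma_{\mathrm{T}_0}$, plus the duality pairing~$\dual{\mathrm{A}\mathrm{e}(u_0)\nn}{\nabla{\theta}^{\top}u_0}$ already present in Theorem~\ref{shapederivofJTresca1}.

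The duality pairing is handled exactly as in the proof of Corollary~\ref{shapederiv1}: since~$\Gamma_0$ is of class~$\mathcal{C}^{3}$ and~$u_0\in\HH^3(\Omega_0,\R^2)$, the stress vector~$\mathrm{A}\mathrm{e}(u_0)\nn$ extends to a function in~$\HH^2(\Omega_0,\R^2)$, so the~$\HH^{-1/2}$--$\HH^{1/2}$ pairing becomes an honest~$\LL^2(\Gamma_0,\R^2)$ integral and~$\mathrm{A}\mathrm{e}(u_0)\nn\cdot\varphi\in\mathrm{W}^{2,1}(\Omega_0,\R^2)$ for any~$\varphi\in\mathcal{C}^{\infty}(\overline{\Omega_0},\R^2)$, so that the surface integration-by-parts formula (Proposition~\ref{intbord}) and the Beltrami identity (Proposition~\ref{beltrami}) may be invoked to move the derivatives around; this is where the mean curvature~$H$ enters. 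Throughout, I would repeatedly use the Neumann condition~$\sigma_{\nn}(u_0)=0$ on~$\Gamma_{\mathrm{T}_0}$ (so that~$\mathrm{A}\mathrm{e}(u_0)\nn=\sigma_{\tau}(u_0)$ there), the splittings~$\theta=(\theta\cdot\nn)\nn+\theta_\tau$ and~$u_0=u_{0_\nn}\nn+u_{0_\tau}$, and the tangential-calculus identities of Appendix~\ref{appendix2} relating~$\nabla\theta\,\tau$, $\nabla_\tau(\theta\cdot\nn)$, $\nabla\tau$, $\mathrm{div}_\tau(\theta)$ and~$H$; for instance the normal part of~$\theta$ in a boundary term of the form~$\sigma_\tau(u_0)\cdot\nabla u_0\,\theta$ produces precisely the contribution~$(\theta\cdot\nn)\,\sigma_\tau(u_0)\cdot\partial_{\nn}(u_0)$.

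It remains to treat the friction term~$\int_{\Gamma_{\mathrm{T_0}^{u_0,g}_{\mathrm{N}}}}p(\theta)\|u_{0_\tau}\|$. Since~$u_{0_\tau}=0$ almost everywhere on~$\Gamma_{\mathrm{T_0}^{u_0,g}_{\mathrm{D}}}\cup\Gamma_{\mathrm{T_0}^{u_0,g}_{\mathrm{S}}}$, this equals~$\int_{\Gamma_{\mathrm{T}_0}}p(\theta)\|u_{0_\tau}\|$; expanding~$p(\theta)=\nabla g\cdot\theta+g(\mathrm{div}_\tau(\theta)-\nabla\theta\,\tau\cdot\tau)$, splitting~$\nabla g\cdot\theta=\nabla_\tau g\cdot\theta_\tau+(\theta\cdot\nn)\partial_{\nn}g$, and integrating the~$\mathrm{div}_\tau(\theta)$-type contribution by parts along~$\Gamma_{\mathrm{T}_0}$ against~$g\|u_{0_\tau}\|$ produces the term~$(\theta\cdot\nn)\,Hg\,\|u_{0_\tau}\|$; here the hypothesis that almost every point of~$\Gamma_{\mathrm{T}_0}$ belongs to the relative interior~$\mathrm{int}_{\Gamma_0}(\Gamma_{\mathrm{T}_0})$ is exactly what guarantees that no contribution on the relative boundary~$\partial\Gamma_{\mathrm{T}_0}$ survives. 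Finally, collecting all boundary terms, the bulk of them reorganizes into~$\int_{\Gamma_{\mathrm{T}_0}}\theta\cdot\nn\bigl(\tfrac12\mathrm{A}\mathrm{e}(u_0):\mathrm{e}(u_0)-f\cdot u_0-\sigma_\tau(u_0)\cdot\partial_{\nn}(u_0)+\|u_{0_\tau}\|(Hg+\partial_{\nn}g)\bigr)$, and the only term not of this Hadamard normal form is~$\int_{\Gamma_{\mathrm{T}_0}}u_{0_\nn}\,\sigma_\tau(u_0)\cdot\tau\,(\nabla\tau\,\theta_\tau-\nabla\theta\,\tau)\cdot\nn$, which arises from the tangential component~$\theta_\tau$ interacting with the variation of the frame~$(\nn,\tau)$ along~$\Gamma_{\mathrm{T}_0}$ (via the saturation identity~$\sigma_\tau(u_0)=-g\,u_{0_\tau}/\|u_{0_\tau}\|$ on~$\Gamma_{\mathrm{T_0}^{u_0,g}_{\mathrm{N}}}$). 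I expect the \emph{main obstacle} to be precisely this last reorganization: tracking, term by term, the normal/tangential decompositions and the curvature contributions so that everything except the stated leftover cancels into the Hadamard normal form; the calculations are long but each individual step is of the same nature as those already carried out in the proofs of Corollaries~\ref{materialderiv2} and~\ref{shapederiv1}.
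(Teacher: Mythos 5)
Your overall plan (push everything onto the boundary, then reorganize by tangential calculus) is the paper's plan, and the first half of your outline --- cancelling the volume terms via the identity $\mathrm{A}\mathrm{e}(u_0):\mathrm{e}(\nabla u_0\theta)=\mathrm{A}\mathrm{e}(u_0):\nabla u_0\nabla\theta+\tfrac12\theta\cdot\nabla(\mathrm{A}\mathrm{e}(u_0):\mathrm{e}(u_0))$ and turning the duality pairing into an $\LL^2(\Gamma_0)$ integral by extending $\mathrm{A}\mathrm{e}(u_0)\nn$ to an $\HH^{2}(\Omega_0,\R^2)$ function --- matches the paper. However, two points that you treat as routine are where the actual content of the proof lies, and as written your argument does not go through. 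First, the friction term: you propose to integrate $g\|u_{0_\tau}\|\,\mathrm{div}_{\tau}(\theta)$ by parts along $\Gamma_{\mathrm{T}_0}$ directly. Proposition~\ref{intbord} requires the integrand to be the trace of a $\mathrm{W}^{2,1}$ function, and $\|u_{0_\tau}\|=\vert u_0\cdot\tau\vert$ is an absolute value, with a kink wherever $u_0\cdot\tau$ changes sign, so it is not $\mathrm{W}^{2,1}$ in general. The paper first uses the hypothesis that almost every point of $\Gamma_{\mathrm{T}_0}$ lies in $\mathrm{int}_{\Gamma_0}(\Gamma_{\mathrm{T}_0})$ to upgrade $u_0$ to a \emph{strong} solution of the Tresca problem, so that the pointwise friction law $g\|u_{0_\tau}\|=-\sigma_\tau(u_0)\cdot u_{0_\tau}$ holds a.e.\ on $\Gamma_{\mathrm{T}_0}$, and only after replacing $g\|u_{0_\tau}\|$ by the smooth product $-(\sigma_\tau(u_0)\cdot\tau)(u_0\cdot\tau)$ (both factors extendable to $\HH^{2}(\Omega_0)$) does it apply Proposition~\ref{intbord}. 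So the relative-interior hypothesis is not there to kill endpoint contributions of the tangential integration by parts, as you suggest; it is what licenses the pointwise saturation identity on all of $\Gamma_{\mathrm{T}_0}$, which in turn supplies the regularity you need.

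Second, and more seriously: after these manipulations one is left with the term $\int_{\Gamma_{\mathrm{T}_0}}g\,(u_0\cdot\tau)\,\nabla\bigl(\tfrac{\sigma_\tau(u_0)\cdot\tau}{g}\bigr)\cdot\theta$, in which the \emph{full} gradient is paired with $\theta$ and not merely with $\theta\cdot\nn$. Your outline gives no mechanism for discarding its tangential part, and it does not cancel against anything else. The paper's key observation is that on the relatively open sets $\Gamma_{\mathrm{T_0}^{u_0,g}_{\mathrm{N+}}}=\{u_0\cdot\tau>0\}$ and $\Gamma_{\mathrm{T_0}^{u_0,g}_{\mathrm{N-}}}=\{u_0\cdot\tau<0\}$ (open because $u_0$ is continuous on the boundary by Sobolev embedding --- this is where $u_0\in\HH^3$ is genuinely used) the friction law forces $\sigma_\tau(u_0)\cdot\tau/g$ to be constant, equal to $-1$ and $+1$ respectively, so its tangential gradient vanishes a.e.\ where $u_0\cdot\tau\neq0$, while elsewhere the prefactor $u_0\cdot\tau$ vanishes. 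Only then does the term reduce to the Hadamard normal form $\int_{\Gamma_{\mathrm{T}_0}}\theta\cdot\nn\,(u_0\cdot\tau)\,\partial_{\nn}(\sigma_\tau(u_0)\cdot\tau)-\theta\cdot\nn\,\tfrac{\sigma_\tau(u_0)\cdot u_{0_\tau}}{g}\partial_{\nn}g$ that produces $\|u_{0_\tau}\|\partial_{\nn}g$ and cancels the $\partial_{\nn}(\sigma_\tau(u_0)\cdot\tau)$ contribution coming from Proposition~\ref{intbord}. This local-constancy argument is the one step of the proof that is an idea rather than bookkeeping, and it is missing from your proposal.
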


\begin{proof}
Since $u_0\in\HH^2(\Omega_0,\R^2)$, it follows from Theorem~\ref{shapederivofJTresca1} that 
\begin{multline*}
    \mathcal{J}'(\Omega_{0})(\theta)=-\frac{1}{2}\int_{\Omega_{0}}\theta\cdot\nabla{{\left(\mathrm{A}\mathrm{e}(u_0):\mathrm{e}(u_0)\right)}}+\int_{\Gamma_{0}}\theta\cdot\nn\frac{\mathrm{A}\mathrm{e}(u_0):\mathrm{e}(u_0)}{2}+\int_{\Omega_0}\mathrm{A}\mathrm{e}\left(u_0\right):\mathrm{e}\left(\nabla{u_0}\theta\right)\\-\int_{\Gamma_0}\mathrm{A}\mathrm{e}\left(u_0\right)\nn\cdot\nabla{u_0}\theta-\int_{\Omega_0}\mathrm{A}\mathrm{e}\left(u_0\right):\nabla{u_0}\nabla{\theta}-\int_{\Gamma_{\mathrm{T}_0}}\theta\cdot\nn \left(f\cdot u_0\right)-\int_{\Gamma_{\mathrm{T}_0}}\mathrm{A}\mathrm{e}(u_0)\nn\cdot\nabla{\theta}^{\top}u_0\\+\int_{\Gamma_{\mathrm{T_0}^{u_0,g}_{\mathrm{N}}}}\left(\nabla{g}\cdot\theta+g\left( \mathrm{div}_{\tau}(\theta)-\nabla{\theta}\tau\cdot\tau\right)\right)\left\|u_{0_\tau}\right\|.
\end{multline*}
Moreover, since
$$
\mathrm{A}\mathrm{e}(u_0):\mathrm{e}\left(\nabla{u_0}\theta\right)=\mathrm{A}\mathrm{e}(u_0):\nabla{u_0}\nabla{\theta}+\frac{1}{2}\theta\cdot\nabla{{\left(\mathrm{A}\mathrm{e}(u_0):\mathrm{e}(u_0)\right)}} \text{ \textit{a.e.}\ on } \Omega_0,
$$
one deduces that
\begin{multline*}
     \mathcal{J}'(\Omega_{0})(\theta)=\int_{\Gamma_{0}}\theta\cdot\nn\left(\frac{\mathrm{A}\mathrm{e}(u_0):\mathrm{e}(u_0)}{2}\right)-\int_{\Gamma_0}\mathrm{A}\mathrm{e}(u_0)\nn\cdot\nabla{u_0}\theta-\int_{\Gamma_{\mathrm{T}_0}}\theta\cdot\nn (f\cdot u_0)-\int_{\Gamma_0}\mathrm{A}\mathrm{e}(u_0)\nn\cdot\nabla{\theta}^{\top}u_0\\+\int_{\Gamma_{\mathrm{T_0}^{u_0,g}_{\mathrm{N}}}}\left(\nabla{g}\cdot\theta+g\left( \mathrm{div}_{\tau}(\theta)-\nabla{\theta}\tau\cdot\tau\right)\right)\left\|u_{0_\tau}\right\|.
\end{multline*}
Furthermore, since almost every point of $\Gamma_{\mathrm{T_0}}$ belongs to $\mathrm{int}_{\Gamma_0}({\Gamma_{\mathrm{T_0}}})$,
%\textcolor{blue}{(est-ce que c'est~$\mathrm{int}_{\Gamma}$ ou $\mathrm{int}_{\Gamma_0}$ ?)} \textcolor{red}{(Aymeric : c'est $\mathrm{int}_{\Gamma_0}$ je viens de corriger. D'ailleurs il semble que cette hypothèse soit en fait inutile. On pourrait à la place des points de lebesgue, utiliser la notion de point de densité qui nécessite juste un ensemble de mesure strictement positive (alors que pour la méthode qui utilise les points de Lebesgue on avait à chaque fois besoin que presque tous les points soient des points intérieurs afin d'avoir une boule)}
it follows that $u_0$ is a strong solution to the Tresca friction problem (see~\cite[Definition~2.11 and Proposition~2.13]{BCJDC2}). Thus, from the Tresca friction law, one has $\left\|u_{0_\tau}\right\|=-\frac{\sigma_\tau (u_0) \cdot {u_0}_\tau}{g}$ \textit{a.e.}\ on $\Gamma_{\mathrm{T_0}}$ and, since~${u_0}_\tau=0$ on~$\Gamma_{\mathrm{T_0}^{u_0,g}_{\mathrm{D}}}\cup\Gamma_{\mathrm{T_0}^{u_0,g}_{\mathrm{S}}}$ and $\theta=0$ on $\Gamma_{\mathrm{D}}$, one gets that
\begin{multline}\label{equalityH2}
     \mathcal{J}'(\Omega_{0})(\theta)=\int_{\Gamma_{0}}\theta\cdot\nn\left(\frac{\mathrm{A}\mathrm{e}(u_0):\mathrm{e}(u_0)}{2}-f\cdot u_0\right)-\int_{\Gamma_0}\mathrm{A}\mathrm{e}(u_0)\nn\cdot\nabla{u_0}\theta-\int_{\Gamma_0}\mathrm{A}\mathrm{e}(u_0)\nn\cdot\nabla{\theta}^{\top}u_0\\-\int_{\Gamma_{\mathrm{0}}}\left(g\sigma_\tau  (u_0) \cdot {u_0}_\tau\frac{\nabla{g}}{g^2}\cdot\theta+\sigma_\tau (u_0) \cdot {u_0}_\tau  \mathrm{div}_{\tau}(\theta)\right)+\int_{\Gamma_{\mathrm{0}}}\sigma_\tau (u_0)\cdot {u_0}_\tau\nabla{\theta}\tau\cdot\tau .
\end{multline}
Moreover, since $\Gamma_0$ is of class $\mathcal{C}^{3}$, $\nn\in\mathcal{C}^2(\Gamma_0,\R^2)$ can be extended over~$\R^2$ such that~$\nn\in\mathcal{C}^2(\R^2,\R^2)$ and $||\nn||=1$ over $\R^2$ (see, e.g.,~\cite[Chapter 5 Section 5.4]{HENROT}). It follows that~$\tau\in\mathcal{C}^2(\Gamma_0,\R^2)$ can also be extended over~$\R^2$ such that~$\tau\in\mathcal{C}^2(\R^2,\R^2)$ and~$||\tau||=1$ over $\R^2$. Moreover, since~$u_0\in\HH^3(\Omega_0,\R^2)$, the shear stress $\sigma_\tau (u_0)=\mathrm{A}\mathrm{e}(u_0)\nn - (\mathrm{A}\mathrm{e}(u_0)\nn\cdot \nn) \nn$ can be extended into a function defined over $\Omega_{0}$ such that~$\sigma_\tau(u_0)\in\HH^{2}(\Omega_{0},\R^2)$. Thus~$\sigma_\tau(u_0)\cdot\tau\in\HH^{2}(\Omega_{0},\R^2)$, ${u_0}\cdot\tau\in\HH^{2}(\Omega_{0},\R^2)$ and one deduces that
\begin{multline*}
     \mathcal{J}'(\Omega_{0})(\theta)=\int_{\Gamma_{0}}\theta\cdot\nn\left(\frac{\mathrm{A}\mathrm{e}(u_0):\mathrm{e}(u_0)}{2}-f\cdot u_0\right)-\int_{\Gamma_0}\mathrm{A}\mathrm{e}(u_0)\nn\cdot\nabla{u_0}\theta-\int_{\Gamma_0}\mathrm{A}\mathrm{e}(u_0)\nn\cdot\nabla{\theta}^{\top}u_0\\-\int_{\Gamma_{0}}\left(\nabla{\left(\left(\sigma_\tau(u_0)\cdot\tau\right){u_0}\cdot\tau\right)}\cdot\theta+\sigma_\tau (u_0) \cdot {u_0}_\tau  \mathrm{div}_{\tau}(\theta)\right)+\int_{\Gamma_{0}}\sigma_\tau (u_0) \cdot\tau \nabla{\left(u_0\cdot\tau\right)\cdot\theta}\\+\int_{\Gamma_{0}}g{u_0}\cdot\tau\nabla{\left(\frac{\sigma_\tau(u_0)\cdot\tau}{g}\right)}\cdot\theta+\int_{\Gamma_{0}}\sigma_\tau (u_0) \cdot {u_0}_\tau\nabla{\theta}\tau\cdot\tau,
\end{multline*}
and, since~$\left(\sigma_\tau(u_0)\cdot\tau\right)u_0\cdot\tau\in\mathrm{W}^{1,2}(\Omega_0,\R)$, one can apply Proposition~\ref{intbord} to get that
\begin{multline*}
     \mathcal{J}'(\Omega_{0})(\theta)=\int_{\Gamma_{0}}\theta\cdot\nn\left(\frac{\mathrm{A}\mathrm{e}(u_0):\mathrm{e}(u_0)}{2}-f\cdot u_0-H\sigma_\tau (u_0)\cdot {u_0}_\tau-\partial_{\nn}\left(\left(\sigma_{\tau}(u_0)\cdot\tau\right)u_0\cdot\tau\right)\right)\\-\int_{\Gamma_0}\mathrm{A}\mathrm{e}(u_0)\nn\cdot\nabla{u_0}\theta-\int_{\Gamma_0}\mathrm{A}\mathrm{e}(u_0)\nn\cdot\nabla{\theta}^{\top}u_0+\int_{\Gamma_{0}}\sigma_\tau (u_0) \cdot\tau \nabla{\left(u_0\cdot\tau\right)\cdot\theta}\\+\int_{\Gamma_{0}}g{u_0}\cdot\tau\nabla{\left(\frac{\sigma_\tau(u_0)\cdot\tau}{g}\right)}\cdot\theta+\int_{\Gamma_{0}}\sigma_\tau (u_0) \cdot {u_0}_\tau\nabla{\theta}\tau\cdot\tau.
\end{multline*}
Moreover, since~$\sigma_{\nn}(u_0)=0$ \textit{a.e.}\ on $\Gamma_{\mathrm{T_0}}$, note that
\begin{multline*}
    -\int_{\Gamma_0}\mathrm{A}\mathrm{e}(u_0)\nn\cdot\nabla{\theta}^{\top}u_0+\int_{\Gamma_{0}}\sigma_\tau (u_0)\cdot {u_0}_\tau\nabla{\theta}\tau\cdot\tau=-\int_{\Gamma_{\mathrm{T_0}}}\left(\sigma_\tau(u_0)\cdot\tau\right)\nabla{\theta}\tau\cdot u_0\\+\int_{\Gamma_{\mathrm{T_0}}}\sigma_\tau (u_0) \cdot {u_0}_\tau\nabla{\theta}\tau\cdot\tau=-\int_{\Gamma_{\mathrm{T_0}}}\sigma_\tau(u_0) \cdot {u_0}_\tau \nabla{\theta}\tau\cdot\tau -\int_{\Gamma_{\mathrm{T_0}}}\left(\sigma_\tau (u_0) \cdot \tau\right) {u_0}_\nn\nabla{\theta}\tau\cdot\nn\\+\int_{\Gamma_{\mathrm{T_0}}}\sigma_\tau (u_0) \cdot {u_0}_\tau\nabla{\theta}\tau\cdot\tau=-\int_{\Gamma_{\mathrm{T_0}}}\left(\sigma_\tau (u_0)\cdot \tau\right) {u_0}_\nn\nabla{\theta}\tau\cdot\nn,
\end{multline*}
and also that
\begin{multline*}
\int_{\Gamma_{0}}\theta\cdot\nn\left(-\partial_{\nn}\left(\left(\sigma_\tau(u_0)\cdot\tau\right)u_0\cdot\tau\right)\right)-\int_{\Gamma_0}\mathrm{A}\mathrm{e}(u_0)\nn\cdot\nabla{u_0}\theta+\int_{\Gamma_{0}}\sigma_\tau (u_0) \cdot\tau \nabla{\left(u_0\cdot\tau\right)\cdot\theta}\\ = \int_{\Gamma_{\mathrm{T_0}}}\theta\cdot\nn\left(-\partial_{\nn}\left(u_0\cdot\tau\right)\sigma_\tau(u_0)\cdot\tau-\partial_{\nn}\left(\sigma_\tau(u_0)\cdot\tau\right)u_0\cdot\tau\right)-\int_{\Gamma_{\mathrm{T_0}}}\left(\sigma_\tau (u_0) \cdot\tau\right)\tau\cdot\nabla{u_0}\theta\\+\int_{\Gamma_{\mathrm{T_0}}}\theta\cdot\nn\left(\sigma_\tau (u_0) \cdot\tau\right) \nabla{\left(u_0\cdot\tau\right)\cdot\nn}+\int_{\Gamma_{\mathrm{T_0}}}\left(\sigma_\tau (u_0) \cdot\tau\right) \nabla{\left(u_0\cdot\tau\right)\cdot\theta_{\tau}}\\=\int_{\Gamma_{\mathrm{T_0}}}\theta\cdot\nn\left(-\partial_{\nn}\left(\sigma_\tau(u_0)\cdot\tau\right)u_0\cdot\tau\right)-\int_{\Gamma_{\mathrm{T_0}}}\theta\cdot\nn\left(\sigma_\tau (u_0) \cdot\tau\right)\tau\cdot\nabla{u_0}\nn-\int_{\Gamma_{\mathrm{T_0}}}\left(\sigma_\tau (u_0) \cdot\tau\right)\theta_{\tau}\cdot\nabla{u_0}\tau\\+\int_{\Gamma_{\mathrm{T_0}}}\left(\sigma_\tau (u_0) \cdot\tau\right) {\nabla{u_0}}^{\top}\tau\cdot\theta_{\tau}+\int_{\Gamma_{\mathrm{T_0}}}\left(\sigma_\tau (u_0) \cdot\tau\right) \nabla{\tau}^{\top}u_0\cdot\theta_{\tau}\\=-\int_{\Gamma_{\mathrm{T_0}}}\theta\cdot\nn\left(\partial_{\nn}\left(\sigma_\tau(u_0)\cdot\tau\right)u_0\cdot\tau+\sigma_\tau (u_0)\cdot\partial_{\nn}(u_0)\right)+\int_{\Gamma_{\mathrm{T_0}}}\left(\sigma_\tau (u_0) \cdot\tau\right) {u_0}_\nn\nabla{\tau}^{\top}\nn\cdot\theta_{\tau},
\end{multline*}
since $||\tau||$=1 on $\R^2$, thus $(\nabla{\tau})\tau\cdot\tau=0$ on $\Gamma_{\mathrm{T_0}}$.
Hence one has
\begin{multline*}
     \mathcal{J}'(\Omega_{0})(\theta)=\int_{\Gamma_{\mathrm{T_0}}}g{u_0}\cdot\tau\nabla{\left(\frac{\sigma_\tau(u_0)\cdot\tau}{g}\right)}\cdot\theta+\int_{\Gamma_{\mathrm{T_0}}}{u_0}_\nn\sigma_\tau (u_0) \cdot\tau\left(\nabla{\tau}\theta_{\tau}-\nabla{\theta}\tau\right)\cdot\nn\\+\int_{\Gamma_{\mathrm{T_0}}}\theta\cdot\nn\left(\frac{\mathrm{A}\mathrm{e}(u_0):\mathrm{e}(u_0)}{2}-f\cdot u_0-H\sigma_\tau (u_0)\cdot {u_0}_\tau-u_0\cdot\tau\partial_{\nn}\left(\sigma_{\tau}(u_0)\cdot\tau\right)-\sigma_{\tau}(u_0)\cdot\partial_{\nn}(u_0)\right).
\end{multline*}
Now let us focus on the first term. Since  ${u_{0}}_\tau=0$ on $\Gamma_{\mathrm{T_0}^{u_0,g}_{\mathrm{D}}}\cup\Gamma_{\mathrm{T_0}^{u_0,g}_{\mathrm{S}}}$, we have
$$
\int_{\Gamma_{\mathrm{T_0}}}g{u_0}\cdot\tau\nabla{\left(\frac{\sigma_\tau(u_0)\cdot\tau}{g}\right)}\cdot\theta=\int_{\Gamma_{\mathrm{T_0}^{u_0,g}_{\mathrm{N}}}}g{u_0}\cdot\tau\nabla{\left(\frac{\sigma_\tau(u_0)\cdot\tau}{g}\right)}\cdot\theta.
$$ 
Let us introduce two disjoint subsets of $\Gamma_{\mathrm{T_0}^{u_0,g}_{\mathrm{N}}}$ given by
$$
\Gamma_{\mathrm{T_0}^{u_0,g}_{\mathrm{N+}}}:=\left\{s\in\Gamma_{\mathrm{T_0}} \mid {u_{0}}(s)\cdot\tau(s)>0\right\}
\quad \mbox{ and } \quad 
\Gamma_{\mathrm{T_0}^{u_0,g}_{\mathrm{N-}}}:=\left\{s\in\Gamma_{\mathrm{T_0}} \mid {u_{0}}(s)\cdot\tau(s)<0\right\}.
$$
It follows that $\Gamma_{\mathrm{T_0}^{u_0,g}_{\mathrm{N}}}=\Gamma_{\mathrm{T_0}^{u_0,g}_{\mathrm{N+}}}\cup\Gamma_{\mathrm{T_0}^{u_0,g}_{\mathrm{N-}}}$, with $\sigma_\tau (u_0)\cdot\tau=-g$ \textit{a.e.}\ on $\Gamma_{\mathrm{T_0}^{u_0,g}_{\mathrm{N+}}}$, and~$\sigma_\tau (u_0)\cdot\tau=g$ \textit{a.e.}\ on $\Gamma_{\mathrm{T_0}^{u_0,g}_{\mathrm{N-}}}$. Moreover, since $u_{0}\in\HH^{3}(\Omega,\R^2)$, we get from Sobolev embeddings (see, e.g.,~\cite[Chapter~4]{ADAMS}) that $u_{0}$ is continuous over $\Gamma_{\mathrm{T_0}}$, thus $\Gamma_{\mathrm{T_0}^{u_0,g}_{\mathrm{N+}}}$ and $\Gamma_{\mathrm{T_0}^{u_0,g}_{\mathrm{N-}}}$ are open subsets of~$\Gamma_{\mathrm{T_0}}$. Hence $\nabla_{\tau}(\frac{\sigma_\tau (u_0)\cdot \tau}{g})=0$ \textit{a.e.}\ on $\Gamma_{\mathrm{T_0}^{u_0,g}_{\mathrm{N+}}}\cup\Gamma_{\mathrm{T_0}^{u_0,g}_{\mathrm{N-}}}$, and one deduces that 
$$
\int_{\Gamma_{\mathrm{T_0}}}g{u_0}\cdot\tau\nabla{\left(\frac{\sigma_\tau(u_0)\cdot\tau}{g}\right)}\cdot\theta=\int_{\Gamma_{\mathrm{T_0}}}\theta\cdot\nn\left(g{u_0}\cdot\tau\nabla{\left(\frac{\sigma_\tau(u_0)\cdot\tau}{g}\right)}\cdot\nn\right), 
$$
that is
$$
\int_{\Gamma_{\mathrm{T_0}}}g{u_0}\cdot\tau\nabla{\left(\frac{\sigma_\tau(u_0)\cdot\tau}{g}\right)}\cdot\theta=\int_{\Gamma_{\mathrm{T_0}}}\theta\cdot\nn\left({u_0}\cdot\tau\partial_{\nn}\left(\sigma_\tau (u_0)\cdot\tau\right)-\frac{\sigma_\tau (u_0) \cdot {u_0}_\tau}{g}\partial_{\nn}g\right).
$$
Then, using the Tresca friction law, one has $\sigma_\tau (u_0)\cdot {u_0}_\tau=-g||{u_0}_\tau||$ \textit{a.e.}\ on $\Gamma_{\mathrm{T_0}}$, which concludes the proof.
\end{proof}

\begin{myRem}\normalfont
Under the weaker condition~$u_0\in\HH^2(\Omega_0,\R^2)$, one can follow the proof of Corollary~\ref{shapederivofJ} and obtain that the shape gradient of $\mathcal{J}$ is given by Equality~\eqref{equalityH2}.
\end{myRem}

\section{Numerical illustration}\label{numericalsim}
In this section our objective is to numerically solve a toy example of the shape optimization problem~\eqref{shapeOptim}, by making use of the theoretical results established in this work. The numerical simulations have been performed using Freefem++ software~\cite{HECHT} with P1-finite elements and standard affine mesh. We could use the expression of the shape gradient of~$\mathcal{J}$ obtained in Theorem~\ref{shapederivofJTresca1} but, in order to simplify the computations, we chose to use the expression provided in Corollary~\ref{shapederivofJ} under additional assumptions that we assumed to be true at each iteration.

\subsection{Numerical methodology}\label{methodnum}
Consider an initial shape~$\Omega_0 \in \mathcal{U}_{\mathrm{ref}}$. Note that Corollary~\ref{shapederivofJ} allows to exhibit a descent direction~$\theta_0$ of the Tresca energy functional~$\mathcal{J}$ at~$\Omega_0$, by finding the unique solution $\theta_0 \in\HH^{1}_{\mathrm{D}}(\Omega_0,\R^2)$ to the variational equality
\begin{equation*}
\int_{\Omega_0}\left(\nabla{\theta_0}:\nabla\theta+\theta_0\cdot\theta\right)=-\mathcal{J}'(\Omega_0)(\theta), \qquad \forall\theta\in\HH^{1}_{\mathrm{D}}(\Omega_0,\R^2),
\end{equation*}
since it satisfies~$\mathcal{J}'(\Omega_{0})(\theta_0)=-\int_{\Omega_0}\left(||\nabla{\theta_0}||^2+||\theta_0||^2\right)\leq0
$.

In order to numerically solve the shape optimization problem~\eqref{shapeOptim} on a given example, we have to deal with the volume constraint~$\vert \Omega \vert = \vert \Omega_{\mathrm{ref}} \vert>0$. For this purpose, the Uzawa algorithm (see, e.g.,~\cite[Chapter 3]{ALL}) is used, and one refers to~\cite[Section 4]{ABCJ} for methodological details. 

Let us mention that the Tresca friction problem is numerically solved using an adaptation of iterative switching algorithms (see~\cite{11AIT}). This algorithm operates by checking at each iteration if the Tresca boundary conditions are satisfied and, if they are not, by imposing them and restarting the computation (see~\cite[Appendix C p.25]{4ABC} for detailed explanations). We also precise that, for all~$j\in~\mathbb{N}^{*}$, the difference between the Tresca energy functional $\mathcal{J}$ at the iteration $20\times j$ and at the iteration~$20\times (j-1)$ is computed. The smallness of this difference is used as a stopping criterion for the algorithm. Finally the curvature term~$H$ is numerically computed by extending the normal~$\mathrm{n}$ into a function~$\tilde{\mathrm{n}}$ which is defined on the whole domain~$\Omega_0$. Then the curvature is given by~$H=\mathrm{div}(\tilde{\mathrm{n}})-\nabla(\tilde{\mathrm{n}}) \mathrm{n} \cdot \mathrm{n}$ (see, e.g.,~\cite[Proposition 5.4.8]{HENROT}).

\subsection{A toy example and numerical results}
In this section, let $f\in\HH^{1}(\R^{2},\R^2)$ be defined by
$$
\fonction{f}{\R^{2}}{\R^2}{(x,y)}{\displaystyle f(x,y):= \begin{pmatrix}
-5x\exp{(x)} & 0.6\exp{(x^2)}
\end{pmatrix}\eta(x,y),}
$$
and $g\in\HH^2(\R^2,\R)$ be defined by
$$
\fonction{g}{\R^{2}}{\R}{(x,y)}{ g(x,y) := \left(1+\sin(-y\frac{\pi}{2})+10^{-3}\right)\eta(x,y),}
$$
where $\eta \in\mathcal{C}^{\infty}(\R^2,\R)$ is a  cut-off function chosen appropriately so that $f$ belongs to~$\HH^{1}(\R^{2},\R^2)$, $g\in\HH^2(\R^2,\R)$ and $g>0$ on $\R^2$. The reference shape $\Omega_{\mathrm{ref}}\subset\R^{2}$ is an ellipse centered at~$(0,0)\in\R^2$, with semi-major axis $a=1.1$ and semi-minor axis $b=1/a$, and the fixed part $\Gamma_{\mathrm{D}}$ is given by
$$
\Gamma_{\mathrm{D}} := \left\{\left(a\cos{\gamma},b\sin{\gamma}\right)\in\Gamma_{\mathrm{ref}} \mid \gamma\in\left[\frac{2\pi}{3},\frac{4\pi}{3}\right]\cup\left[\frac{5\pi}{3},\frac{7\pi}{3}\right] \right \}.
$$
We refer to Figure~\ref{figurrre}. The volume constraint is $\vert \Omega_{\mathrm{ref}} \vert=\pi$
 and the initial shape is $\Omega_{0}:=\Omega_{\mathrm{ref}}$.
 
In the sequel we consider that, for all $\Omega\in\mathcal{U}_{\mathrm{ref}}$, the Cauchy stress tensor~$\sigma$, defined by~$\sigma(v):=\mathrm{A}\mathrm{e}(v)$ for all $v\in\HH^{1}_{\mathrm{D}}(\Omega,\R^2)$, satisfies
 $$
 \sigma(v)=2\mu\mathrm{e}(v)+\lambda \mathrm{tr}\left(\mathrm{e}(v)\right)\mathrm{I},
 $$
for all $v\in\HH^{1}_{\mathrm{D}}(\Omega,\R^2)$, where $\mathrm{tr}\left(\mathrm{e}(v)\right)$ is the trace of the matrix $\mathrm{e}(v)$, and $\mu\geq0,\lambda\geq0$ are Lamé parameters (see, e.g.,~\cite{SALEN}). From a physical point of view, this assumption corresponds to \textit{isotropic} elastic solids. In the sequel we consider the arbitrary data $\mu=0.5$ and~$\lambda=0$.
 
 We present now the numerical results obtained for this toy example using the numerical methodology described in Section~\ref{methodnum}.

 \begin{figure}[ht]
  \centering
\begin{tikzpicture}[scale=1.5]
\draw (0,0) node{$\Omega_{\mathrm{ref}}$};
\draw  [color=black] (0.55,0.7872) arc (60:120:1.1 and 0.909);
\draw  [color=red] (-0.55,0.7872) arc (120:240:1.1 and 0.909);
\draw  [color=black] (-0.55,-0.7872) arc (240:300:1.1 and 0.909);
\draw  [color=red] (0.55,-0.7872) arc (300:420:1.1 and 0.909);
\draw (1.2,0) [color=red] node[right]{$\Gamma_{\mathrm{D}}$};
\draw (0,-1) [color=black] node[below]{$\Gamma_{\mathrm{T}_{\mathrm{ref}}}$};
  \end{tikzpicture}
  \caption{$\Omega_{\mathrm{ref}}$ and its boundary $\Gamma_{\mathrm{ref}}=\Gamma_{\mathrm{D}}\cup\Gamma_{\mathrm{T}_{\mathrm{ref}}}$.}\label{figurrre}
  \end{figure}

 In Figure~\ref{fig1} is represented the initial shape (left) and the shape which numerically solves Problem~\eqref{shapeOptim} (right). On top are the vector values of the solution $u$ to the Tresca friction problem~\eqref{Trescaproblem2222}. On the initial shape, note that, on the bottom blue boundary, the norm of the shear stress is strictly inferior at the friction threshold $g$, thus $u_\tau=0$, while the top black boundary shows some points where the norm of the shear stress reaches the friction threshold.

 %At the bottom is shown the values of the integrand of $\mathcal{J}$. It seems that the area where the integrand of $\mathcal{J}$ is the lowest (in orange) has been shifted to the left by "pushing" the left boundary (which corresponds to the part where there is no compressive stress), while in return, the right boundary (which corresponds to the contact part) has been pulled.  
 
\begin{figure}[h!]
    \centering
    \includegraphics[scale=0.3]{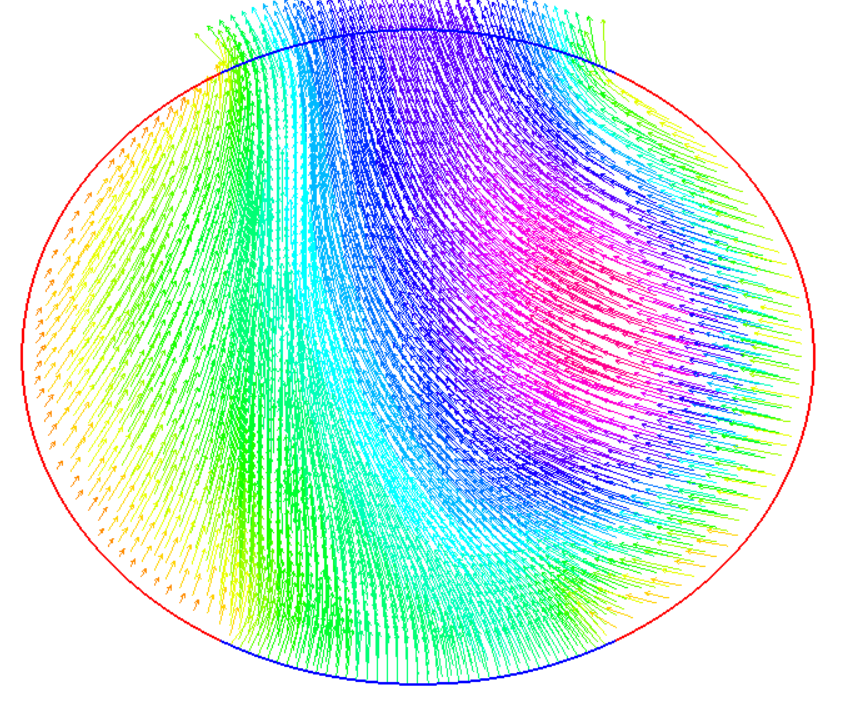}
    \includegraphics[scale=0.3]{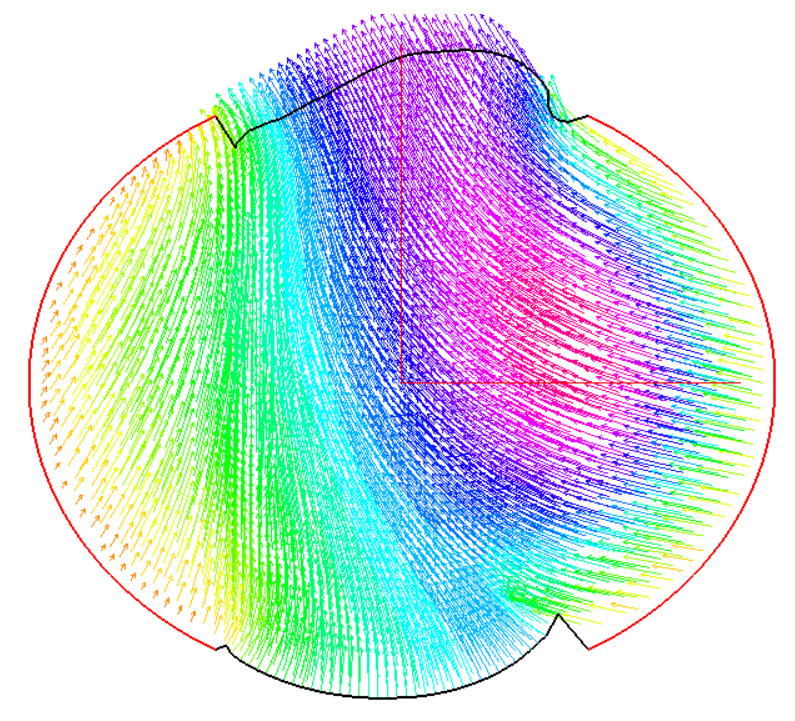}  
    \caption{Initial shape (left) and the shape minimizing $\mathcal{J}$ under the volume constraint~$\vert \Omega \vert=\pi$ (right) . }
    \label{fig1}
\end{figure}

 Figure~\ref{figure3} shows the values of $\mathcal{J}$ (left) and the volume~$\vert \Omega \vert$ of the shape (right) with respect to the iterations. We observe that $\mathcal{J}$ is lower at the final shape, than at the initial shape, with some oscillations due to the Lagrange multiplier in order to satisfy the volume constraint. 

\begin{figure}[h!]
    \centering
\includegraphics[scale=0.45]{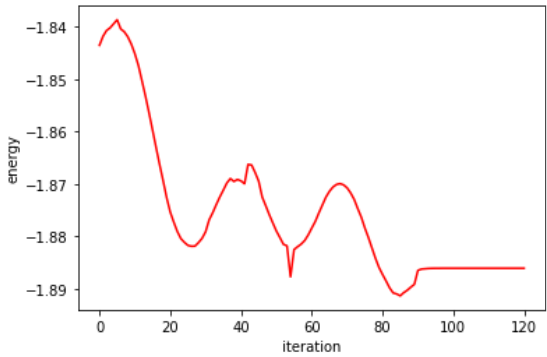}
    \hspace{2cm}
    \includegraphics[scale=0.45]{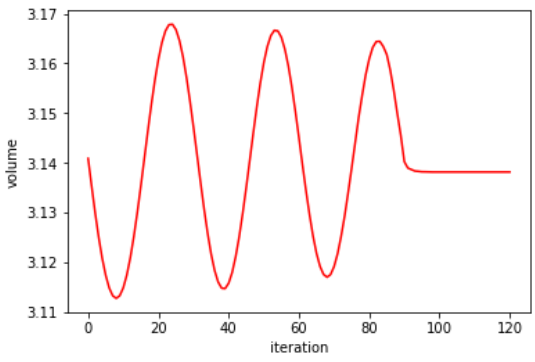}  
    \caption{Values of the energy functional (left) and of the volume (right) with respect to the iterations.}
    \label{figure3}
\end{figure}

\appendix

\section{Reminders on twice epi-differentiability}\label{appendix}

For notions and results recalled in this appendix, we refer to standard references from nonsmooth analysis literature such as~\cite{BREZ2,MINTY,ROCK2} and~\cite[Chapter~12]{ROCK}. In what follows,~$(\mathcal{H}, \dual{\cdot}{\cdot}_{\mathcal{H}})$ stands for a general real Hilbert space. The \textit{domain} and the \textit{epigraph} of an extended real-valued function~$\psi : \mathcal{H}\rightarrow \mathbb{R}\cup\left\{\pm \infty \right\}$ are respectively defined by
$$
\mathrm{dom}\left(\psi\right):=\left\{x\in \mathcal{H} \mid \psi(x)<+\infty \right\} \quad \text{and} \quad
\mathrm{epi}\left(\psi\right):=\left\{(x,\beta)\in \mathcal{H}\times\mathbb{R}\mid \psi(x)\leq \beta\right\}.
$$
Recall that $\psi$ is said to be \textit{proper} if $\mathrm{dom}(\psi)\neq \emptyset$ and $\psi(x)>-\infty$ 
for all~$x\in\mathcal{H}$, and that $\psi$ is convex (resp.\ lower semi-continuous) if and only if~$\mathrm{epi}(\psi)$ is a convex (resp.\ closed) subset of~$\mathcal{H}\times\R$. When $\psi$ is proper, we denote by $\partial{\psi}  :  \mathcal{H} \rightrightarrows \mathcal{H}$ its \textit{convex subdifferential operator}, defined by 
 $$
 \partial{\psi}(x):=\left\{y\in\mathcal{H} \mid \forall z\in\mathcal{H}\text{, } \dual{y}{z-x}_{\mathcal{H}}\leq \psi(z)-\psi(x)\right\},
 $$
when $x\in \mathrm{dom}(\psi)$, and by $\partial{\psi}(x):= \emptyset$ when $x\notin \mathrm{dom}(\psi)$. The notion of \textit{proximal operator} has been introduced by J.J.\ Moreau in 1965 (see~\cite{MOR}) as follows.

\begin{myDefn}[Proximal operator]\label{proxi}
The \emph{proximal operator} associated with a proper, lower semi-continuous and convex function $\psi  : \mathcal{H} \rightarrow \R\cup\left\{+\infty\right\}$ is the map~$\mathrm{prox}_{\psi}  :  \mathcal{H} \rightarrow \mathcal{H}$ defined by
 $$
      \mathrm{prox}_{\psi}(x):=\underset{y\in\mathcal{H}}{\argmin}\left[ \psi(y)+\frac{1}{2}\left \| y-x \right \|^{2}_{\mathcal{H}}\right]=(\mathrm{id}+\partial \psi)^{-1}(x),
 $$
for all $x\in\mathcal{H}$, where $\mathrm{id}  :  \mathcal{H}\rightarrow \mathcal{H}$ stands for the identity operator.
\end{myDefn}

Recall that, if $\psi  :  \mathcal{H} \rightarrow \R\cup\left\{+\infty\right\}$ is a proper, lower semi-continuous and convex function, then its subdifferential~$\partial{\psi}$ is maximal monotone (see, e.g.,~\cite{ROCK2}), and thus its proximal operator~$\mathrm{prox}_{\psi} : \mathcal{H} \rightarrow \mathcal{H}$ is well-defined, single-valued and nonexpansive, i.e.\ Lipschitz continuous with modulus $1$ (see, e.g.,~\cite[Chapter II]{BREZ2}). 

The notion of \textit{twice epi-differentiability} introduced by R.T.\ Rockafellar in~1985 (see~\cite{Rockafellar}) is defined as the Mosco epi-convergence of second-order difference quotient functions. In what follows, we provide reminders and backgrounds on these notions for the reader's convenience. For more details, we refer to~\cite[Chapter 7, Section B]{ROCK} for the finite-dimensional case and to~\cite{DO} for the infinite-dimensional case. In the sequel, the strong (resp.\ weak) convergence of a sequence in~$\mathcal{H}$ will be denoted by~$\rightarrow$ (resp.\ $\rightharpoonup$) and all limits with respect to~$t$ will be considered for~$t \to 0^+$.

\begin{myDefn}[Mosco convergence]\label{limitemuch}
The \emph{outer}, \emph{weak-outer}, \emph{inner} and \emph{weak-inner limits} of a parameterized family~$(S_{t})_{t>0}$ of subsets of $\mathcal{H}$ are respectively defined by
\begin{eqnarray*}
      \mathrm{lim}\sup S_{t}&:=&\left\{ x\in \mathcal{H} \mid \exists (t_{n})_{n\in\mathbb{N}}\rightarrow 0^{+}, \; \exists \left(x_{n}\right)_{n\in\mathbb{N}}\rightarrow x, \; \forall n\in\mathbb{N}, \; x_{n}\in S_{t_{n}}\right\},\\
     \mathrm{w}\text{-}\mathrm{lim}\sup S_{t}&:=&\left\{ x\in \mathcal{H} \mid \exists (t_{n})_{n\in\mathbb{N}}\rightarrow 0^{+}, \; \exists \left(x_{n}\right)_{n\in\mathbb{N}}\rightharpoonup x, \; \forall n\in\mathbb{N}, \; x_{n}\in S_{t_{n}}\right\},\\
     \mathrm{lim}\inf S_{t}&:=&\left\{ x\in \mathcal{H} \mid \forall (t_{n})_{n\in\mathbb{N}}\rightarrow 0^{+}, \; \exists \left(x_{n}\right)_{n\in\mathbb{N}}\rightarrow x, \; \exists N\in\mathbb{N}, \; \forall n\geq N, \; x_{n}\in S_{t_{n}}\right\},\\
     \mathrm{w}\text{-}\mathrm{lim}\inf S_{t}&:=&\left\{ x\in \mathcal{H} \mid \forall (t_{n})_{n\in\mathbb{N}}\rightarrow 0^{+}, \; \exists \left(x_{n}\right)_{n\in\mathbb{N}}\rightharpoonup x, \; \exists N\in\mathbb{N}, \; \forall n\geq N, \; x_{n}\in S_{t_{n}}\right\}.
\end{eqnarray*}
The family~$(S_{t})_{t>0}$ is said to be \emph{Mosco convergent} if~$
\mathrm{w}\text{-}\mathrm{lim}\sup S_{t}\subset\mathrm{lim}\inf S_{t}
$. In that case, all the previous limits are equal and we write
$$
     \mathrm{M}\text{-}\mathrm{lim}~S_{t}:=\mathrm{lim}\inf S_{t}=\mathrm{lim}\sup S_{t}=\mathrm{w}\text{-}\mathrm{lim}\inf S_{t}=\mathrm{w}\text{-}\mathrm{lim}\sup S_{t}.
$$
\end{myDefn}

\begin{myDefn}[Mosco epi-convergence]\label{Mosco}
  Let $(\psi_{t})_{t>0}$ be a parameterized family of functions~$\psi_{t}  : \mathcal{H}\rightarrow \mathbb{R}\cup\left\{\pm \infty \right\}$ for all $t>0$.
 We say that $(\psi_{t})_{t>0}$ is \emph{Mosco epi-convergent} if~$(\mathrm{epi}(\psi_{t}))_{t>0}$ is Mosco convergent in~$\mathcal{H}\times \R$. Then we denote by $\mathrm{ME}\text{-}\mathrm{lim}~ \psi_{t}  :  \mathcal{H}\rightarrow \mathbb{R}\cup\left\{\pm \infty \right\}$ the function characterized by its epigraph~$\mathrm{epi}\left(\mathrm{ME}\text{-}\mathrm{lim}~\psi_{t}\right):=\mathrm{M}\text{-}\mathrm{lim}$ $\displaystyle \mathrm{epi}\left(\psi_{t}\right)$ and we say that $(\psi_{t})_{t>0}$ \emph{Mosco epi-converges} to~$\mathrm{ME}\text{-}\mathrm{lim}~\psi_{t}$.
 \end{myDefn}
 
The notion of \textit{twice epi-differentiability} was originally introduced in~\cite{Rockafellar} for nonparameterized convex functions. However, the framework of the present paper requires an extended version to parameterized convex functions which has been developed in~\cite{8AB}. 
To provide reminders on this extended notion, when considering a function~$\Psi  :  \mathbb{R}_{+}\times \mathcal{H}\rightarrow \mathbb{R}\cup\left\{+\infty\right\}$ such that, for all $t\geq0$, $\Psi(t,\cdot) : \mathcal{H}\rightarrow \mathbb{R} \cup\left\{+\infty\right\}$ is a proper function, we will make use of the following two notations:~$\partial \Psi(0,\mathord{\cdot} )(x)$ stands for the convex subdifferential operator at~$x\in\mathcal{H}$ of the function~$\Psi(0,\cdot)$, and, for each $t\geq 0$, $\Psi^{-1}(t , \mathbb{R}):=\left\{ x\in\mathcal{H}\mid \; \Psi(t,x)\in\R \right\}$ and $\Psi^{-1}(\mathord{\cdot} , \mathbb{R}):=\displaystyle\cap_{t\geq 0}\Psi^{-1}(t , \mathbb{R})  $.

 \begin{myDefn}[Twice epi-differentiability depending on a parameter]\label{epidiffpara}
Let~$\Psi  :  \mathbb{R}_{+}\times \mathcal{H}\rightarrow \mathbb{R} \cup\left\{+\infty\right\}$ be a function such that, for all $t\geq0$, $\Psi(t,\cdot) : \mathcal{H}\rightarrow \mathbb{R} \cup\left\{+\infty\right\}$ is a proper lower semi-continuous convex function. Then $\Psi$ is said to be \emph{twice epi-differentiable} at $x\in \Psi^{-1}(\mathord{\cdot} , \mathbb{R})$ for~$y\in\partial \Psi(0,\mathord{\cdot} )(x)$ if the family of second-order difference quotient functions $(\Delta_{t}^{2}\Psi(x|y))_{t>0}$ defined by
$$
  \fonction{\Delta_{t}^{2}\Psi(x|y) }{\mathcal{H}}{\mathbb{R}\cup\left\{+\infty\right\}}{z}{ \Delta_{t}^{2}\Psi(x|y) (z) := \displaystyle\frac{\Psi(t,x+t z)-\Psi(t,x)-t\dual{ y}{z}_{\mathcal{H}}}{t^{2}},}
$$
for all $t>0$, is Mosco epi-convergent. In that case, we denote by
$$
\mathrm{D}_{e}^{2}\Psi(x|y):=\mathrm{ME}\text{-}\mathrm{lim}~\Delta_{t}^{2}\Psi(x|y) ,
$$
which is called the \emph{second-order epi-derivative} of $\Psi$ at $x$ for $y$.
\end{myDefn}

The next proposition (which can be found in~\cite[Theorem~4.15]{8AB}) is the key point to derive our main results in the present work.

\begin{myProp}\label{TheoABC2018}
Let~$\Psi  :  \mathbb{R}_{+}\times \mathcal{H}\rightarrow \mathbb{R} \cup \left\{+\infty\right\}$ be a function such that, for all $t\geq0$, $\Psi(t,\cdot) : \mathcal{H}\rightarrow \mathbb{R} \cup\left\{+\infty\right\}$ is a proper, lower semi-continuous and convex function. Let $F  :  \mathbb{R}_{+}\rightarrow \mathcal{H}$ and~$u  :  \mathbb{R}_{+}\rightarrow \mathcal{H}$ be defined by
$$
    u(t):=\mathrm{prox}_{\Psi(t,\mathord{\cdot} )}(F(t)),
$$
for all~$t\geq 0$. If the conditions 
\begin{enumerate}
    \item[{\rm (i)}]  $F$ is differentiable at $t=0$;
    \item[{\rm (ii)}]  $\Psi$ is twice epi-differentiable at $u(0)$ for $F(0)-u(0)\in\partial \Psi(0,\mathord{\cdot} )(u(0))$;
    \item[{\rm (iii)}]  $\mathrm{D}_{e}^{2}\Psi(u(0)|F(0)-u(0))$ is a proper function on $\mathcal{H}$; 
\end{enumerate}
are satisfied, then $u$ is differentiable at $t=0$ with
$$
u'(0)=\mathrm{prox}_{\mathrm{D}_{e}^{2}\Psi(u(0)|F(0)-u(0))}(F'(0)).
$$
\end{myProp}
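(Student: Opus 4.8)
The plan is to deduce the statement from Attouch's theorem on the continuity of resolvents with respect to Mosco epi-convergence, after isolating an algebraic identity that expresses the proximal operator of a second-order difference quotient of $\Psi$ in terms of $\mathrm{prox}_{\Psi(t,\cdot)}$ itself. Indeed, $u(t)=\mathrm{prox}_{\Psi(t,\cdot)}(F(t))$ is a moving resolvent evaluated at a moving point, so its differentiability should follow from (a) the differentiability of the point, provided by condition~(i), and (b) a suitable convergence of the maps $\mathrm{prox}_{\Psi(t,\cdot)}$ near $u(0)$, which is precisely what twice epi-differentiability (conditions~(ii)--(iii)) encodes at the level of second-order difference quotients.

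First I would record the key identity. Fix $t>0$, set $x:=u(0)$, $y:=F(0)-u(0)\in\partial\Psi(0,\cdot)(x)$ and $\psi_t:=\Delta_{t}^{2}\Psi(x|y)$. Since $\Psi(t,\cdot)$ is proper, lower semi-continuous and convex, since $z\mapsto x+tz$ is an affine bijection of $\mathcal{H}$, and since $x\in\Psi^{-1}(\cdot,\mathbb{R})$ is part of hypothesis~(ii) (so $\Psi(t,x)\in\mathbb{R}$), the function $\psi_t$ is again proper, lower semi-continuous and convex, so $\mathrm{prox}_{\psi_t}$ is well defined and nonexpansive. The subdifferential chain rule under the affine bijection $z\mapsto x+tz$ gives $\partial\psi_t(w)=\tfrac{1}{t}\bigl(\partial\Psi(t,\cdot)(x+tw)-y\bigr)$. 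Hence, for $p\in\mathcal{H}$, $w=\mathrm{prox}_{\psi_t}(p)=(\mathrm{id}+\partial\psi_t)^{-1}(p)$ is equivalent to $y+t(p-w)\in\partial\Psi(t,\cdot)(x+tw)$, i.e. to $(x+tw)+\bigl(y+t(p-w)\bigr)\in(\mathrm{id}+\partial\Psi(t,\cdot))(x+tw)$; since $(x+tw)+y+t(p-w)=x+y+tp$, this reads $x+tw=\mathrm{prox}_{\Psi(t,\cdot)}(x+y+tp)$, that is, using $x+y=F(0)$,
\[
\mathrm{prox}_{\psi_t}(p)=\frac{\mathrm{prox}_{\Psi(t,\cdot)}\bigl(F(0)+tp\bigr)-u(0)}{t}.
\]
Evaluating this at $p=p_t:=\tfrac{F(t)-F(0)}{t}$ yields $F(0)+tp_t=F(t)$, hence $\mathrm{prox}_{\psi_t}(p_t)=\tfrac{u(t)-u(0)}{t}$, and by condition~(i) one has $p_t\to F'(0)$ strongly as $t\to0^{+}$.

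Second, I would pass to the limit. By condition~(ii), $(\psi_t)_{t>0}$ Mosco epi-converges to $\psi_0:=\mathrm{D}_{e}^{2}\Psi(u(0)|F(0)-u(0))$, which is proper by condition~(iii) and automatically lower semi-continuous and convex as a Mosco epi-limit of such functions. Attouch's theorem — the equivalence, for proper lower semi-continuous convex functions, between Mosco epi-convergence and pointwise strong convergence of the associated resolvents (see, e.g.,~\cite{BREZ2,ROCK2}, and~\cite{8AB} in the parameterized setting) — then gives $\mathrm{prox}_{\psi_t}(q)\to\mathrm{prox}_{\psi_0}(q)$ strongly for every fixed $q\in\mathcal{H}$. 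Combining this with the nonexpansiveness of each $\mathrm{prox}_{\psi_t}$ and with $p_t\to F'(0)$, the triangle inequality
\[
\norm{\mathrm{prox}_{\psi_t}(p_t)-\mathrm{prox}_{\psi_0}(F'(0))}\le\norm{p_t-F'(0)}+\norm{\mathrm{prox}_{\psi_t}(F'(0))-\mathrm{prox}_{\psi_0}(F'(0))}
\]
shows that $\mathrm{prox}_{\psi_t}(p_t)\to\mathrm{prox}_{\psi_0}(F'(0))$. In view of the identity $\mathrm{prox}_{\psi_t}(p_t)=\tfrac{u(t)-u(0)}{t}$, this is exactly the assertion that $u$ is differentiable at $t=0$ with $u'(0)=\mathrm{prox}_{\mathrm{D}_{e}^{2}\Psi(u(0)|F(0)-u(0))}(F'(0))$.

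The main obstacle is the step invoking Attouch's theorem: the whole point of using \emph{Mosco} epi-convergence (rather than weaker notions of epigraphical convergence) is that it is the exact hypothesis under which resolvents converge pointwise in the strong topology in an infinite-dimensional Hilbert space, and one must handle this convergence carefully — in particular combining it with the uniform $1$-Lipschitz bound to allow the base point $p_t$ to vary. A secondary, more routine technical point is the justification of the subdifferential chain rule and of the properness of $\psi_t$ for every $t>0$, which rests on the standing requirement $u(0)\in\Psi^{-1}(\cdot,\mathbb{R})$ built into the definition of twice epi-differentiability.
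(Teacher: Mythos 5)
Your proof is correct. The paper does not actually prove this proposition — it is recalled verbatim from~\cite[Theorem~4.15]{8AB} — but your argument (the algebraic identity expressing $\mathrm{prox}_{\Delta_{t}^{2}\Psi(u(0)|F(0)-u(0))}$ in terms of $\mathrm{prox}_{\Psi(t,\cdot)}$ and the translated/rescaled argument, followed by Attouch's theorem on the convergence of resolvents under Mosco epi-convergence and the nonexpansiveness estimate to handle the moving point $p_t$) is essentially the standard proof of that result.
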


\section{Reminders on differential geometry}\label{appendix2}

In this appendix, let $\Omega$ be a nonempty bounded connected open subset of~$\R^{2}$ with a Lipschitz boundary $\Gamma :=\partial{\Omega}$ and $\nn$ be the outward-pointing unit normal vector to $\Gamma$. The next proposition, known as \textit{divergence formula}, can be found in~\cite[Theorem 4.4.7 p.104]{ALLNUM}. The following two propositions are also useful in the present paper and their proofs can be found in~\cite{HENROT}.

\begin{myProp}[Divergence formula]\label{div}
Consider the space
$$\HH_{\mathrm{div}}(\Omega, \R^{2\times 2}):= \{ w\in \mathrm{L}^{2}(\Omega,\R^{2\times 2})  \mid \mathrm{div} (w)\in \mathrm{L}^{2}(\Omega,\R^2) \},
$$
where $\mathrm{div}(w)$ is the vector whose the $i$-th component is defined by $\mathrm{div}(w)_i:=\mathrm{div}(w_{i})\in\LL^2(\Omega,\R)$, and where $w_i\in\mathrm{L}^{2}(\Omega,\R^{2})$ is the transpose of the~$i$-th line of~$w$, for all $i\in \{ 1,2 \}$. If $w\in \HH_{\mathrm{div}}(\Omega, \R^{2\times 2})$,
then $w$ admits a normal trace, denoted by~$w\nn \in \HH^{-1/2}(\Gamma,\R^2)$, satisfying
$$
\displaystyle\int_{\Omega}\mathrm{div}(w)\cdot v+\int_{\Omega}w:\nabla v=\dual{w\nn}{v}_{\HH^{-1/2}(\Gamma,\R^2)\times \HH^{1/2}(\Gamma,\R^2)}, \qquad\forall v \in \HH^1(\Omega,\R^{2}).
$$
\end{myProp}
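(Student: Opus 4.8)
The plan is to deduce this tensor-valued Green formula from its classical vector-valued counterpart, applied row by row, the vector-valued version itself being obtained by a density argument from the smooth case.

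First I would recall (or re-establish) the vector-valued statement: if $w\in\LL^2(\Omega,\R^2)$ satisfies $\mathrm{div}(w)\in\LL^2(\Omega,\R)$ in the sense of distributions, then $w$ admits a normal trace $w\cdot\nn\in\HH^{-1/2}(\Gamma,\R)$ with $\int_{\Omega}\mathrm{div}(w)\,v+\int_{\Omega}w\cdot\nabla v=\dual{w\cdot\nn}{v}_{\HH^{-1/2}(\Gamma,\R)\times\HH^{1/2}(\Gamma,\R)}$ for all $v\in\HH^1(\Omega,\R)$. The argument goes in three steps. (a) For $w\in\mathcal{C}^{\infty}(\overline{\Omega},\R^2)$ this is the classical divergence theorem on the Lipschitz domain $\Omega$, with $w\cdot\nn\in\LL^2(\Gamma,\R)\subset\HH^{-1/2}(\Gamma,\R)$. (b) For such smooth $w$, the linear form $v\mapsto\int_{\Omega}\mathrm{div}(w)\,v+\int_{\Omega}w\cdot\nabla v$ vanishes on $\HH^1_0(\Omega,\R)$ by the very definition of the distributional divergence, so it factors through the trace operator $\HH^1(\Omega,\R)\to\HH^{1/2}(\Gamma,\R)$; since the latter is surjective and admits a bounded right inverse, one obtains the estimate $\norm{w\cdot\nn}_{\HH^{-1/2}(\Gamma,\R)}\le C\big(\norm{w}_{\LL^2(\Omega,\R^2)}+\norm{\mathrm{div}(w)}_{\LL^2(\Omega,\R)}\big)$. (c) Using the density of $\mathcal{C}^{\infty}(\overline{\Omega},\R^2)$ in $\{w\in\LL^2(\Omega,\R^2)\mid\mathrm{div}(w)\in\LL^2(\Omega,\R)\}$ endowed with the graph norm, the normal trace operator extends by continuity to the whole space, and the Green formula passes to the limit.

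Then, given $w\in\HH_{\mathrm{div}}(\Omega,\R^{2\times 2})$, each row $w_i\in\LL^2(\Omega,\R^2)$ has $\mathrm{div}(w_i)=\mathrm{div}(w)_i\in\LL^2(\Omega,\R)$, so the vector-valued result applies and yields $w_i\cdot\nn\in\HH^{-1/2}(\Gamma,\R)$; I would set $w\nn:=(w_1\cdot\nn,w_2\cdot\nn)\in\HH^{-1/2}(\Gamma,\R^2)$. For any $v\in\HH^1(\Omega,\R^2)$, the pointwise identities $\mathrm{div}(w)\cdot v=\sum_{i=1}^2\mathrm{div}(w_i)\,v_i$ and $w:\nabla v=\sum_{i=1}^2 w_i\cdot\nabla v_i$ hold \textit{a.e.}\ on $\Omega$, and summing the scalar Green formula over $i\in\{1,2\}$ produces exactly the claimed identity; continuity of $w\mapsto w\nn$ follows componentwise from step (b). The only genuinely nontrivial ingredients are those invoked in step (c): the density of smooth vector fields in $H(\mathrm{div};\Omega)$ for a merely Lipschitz domain, and the surjectivity together with a bounded lifting of the trace map $\HH^1(\Omega)\to\HH^{1/2}(\Gamma)$. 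These classical but non-elementary facts about Lipschitz domains I would simply quote (as in~\cite{ALLNUM}) rather than reprove, the remainder of the proof being routine linear algebra and a passage to the limit.
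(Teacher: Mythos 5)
Your argument is correct. Note, however, that the paper does not prove this proposition at all: it is stated as a classical result and simply quoted from~\cite[Theorem 4.4.7 p.104]{ALLNUM}, so there is no in-paper proof to compare against. What you propose is precisely the standard construction of the normal trace in $\HH_{\mathrm{div}}$, applied row by row: the scalar identity $w:\nabla v=\sum_{i=1}^2 w_i\cdot\nabla v_i$ is consistent with the paper's conventions for $:$ and for rows of $\nabla v$, and summing the vector-valued Green formula over $i$ gives exactly the stated identity. One small simplification worth noting: the density of $\mathcal{C}^{\infty}(\overline{\Omega},\R^2)$ in $H(\mathrm{div};\Omega)$ on a merely Lipschitz domain, which you invoke in step (c), can be avoided entirely. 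Your step (b) already works verbatim for an arbitrary $w$ with $\mathrm{div}(w)\in\LL^2$: since the linear form $v\mapsto\int_{\Omega}\mathrm{div}(w)\,v+\int_{\Omega}w\cdot\nabla v$ vanishes on $\HH^1_0(\Omega,\R)$, it descends to $\HH^{1/2}(\Gamma,\R)$ via any bounded right inverse of the trace, and this descended functional \emph{is} the definition of $w\cdot\nn$; consistency with the classical normal trace for smooth $w$ is then immediate from the divergence theorem. With that rearrangement the only nontrivial input is the surjectivity of the trace map with bounded lifting on a Lipschitz domain, and the density theorem need not be quoted.
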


\begin{myProp}\label{intbord}
Assume that~$\Gamma$ is of class $\mathcal{C}^2$ and let $\theta \in \mathcal{C}^{1}(\R^{2},\R^{2})$. It holds that
\begin{equation*}
    \int_{\Gamma}(\theta\cdot\nabla{v}+v\mathrm{div}_{\tau}(\theta))=\int_{\Gamma}\theta\cdot\nn(\partial_{\nn}v+Hv), \qquad\forall v \in \mathrm{W}^{2,1}(\Omega,\R),
\end{equation*}
where $\mathrm{div}_{\tau}(\theta):=\mathrm{div}(\theta)-(\nabla{ \theta}\nn \cdot \nn) \in \LL^\infty(\Gamma)$ is the {tangential divergence} of $\theta$,~$\partial_{\nn} v := \nabla v \cdot \nn \in \LL^1(\Gamma,\R)$ stands for the normal derivative of~$v$, and $H$ stands for the \textit{mean curvature} of $\Gamma$.
\end{myProp}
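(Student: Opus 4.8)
The plan is to reduce the claimed identity to a tangential Gauss--Green formula on the closed $\mathcal{C}^2$ curve $\Gamma$ and to absorb the low regularity of $v$ by a density argument. First I would note that, since $\Gamma$ is of class $\mathcal{C}^2$, the normal $\nn$ is $\mathcal{C}^1$ and the mean curvature $H$ is continuous on $\Gamma$, while $\mathrm{div}_\tau(\theta)\in\LL^\infty(\Gamma)$ because $\theta\in\mathcal{C}^1(\R^2,\R^2)$. For $v\in\mathrm{W}^{2,1}(\Omega,\R)$ the trace theorem (valid on the Lipschitz domain $\Omega$) gives $v|_\Gamma\in\LL^1(\Gamma)$ and, since $\nabla v\in\mathrm{W}^{1,1}(\Omega,\R^2)$, also $\nabla v|_\Gamma\in\LL^1(\Gamma)$; hence both sides of the asserted equality are well defined and depend continuously on $v$ for the $\mathrm{W}^{2,1}(\Omega,\R)$-norm. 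Choosing $(v_n)\subset\mathcal{C}^2(\overline{\Omega},\R)$ with $v_n\to v$ in $\mathrm{W}^{2,1}(\Omega,\R)$, it then suffices to prove the formula for smooth $v$ and let $n\to+\infty$.

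Second, for $v\in\mathcal{C}^2(\overline{\Omega},\R)$ I would apply a tangential Leibniz rule to the ambient vector field $w:=v\,\theta$. Extending $v$ and $\nn$ to a neighborhood of $\Gamma$ and computing directly from the definition $\mathrm{div}_\tau(w)=\mathrm{div}(w)-(\nabla w\,\nn)\cdot\nn$, the ordinary product rule $\mathrm{div}(v\theta)=\theta\cdot\nabla v+v\,\mathrm{div}(\theta)$ together with the identity $(\nabla(v\theta)\nn)\cdot\nn=(\theta\cdot\nn)\,\partial_\nn v+v\,(\nabla{\theta}\,\nn\cdot\nn)$ yields
\[
\mathrm{div}_\tau(v\theta)=\theta\cdot\nabla v+v\,\mathrm{div}_\tau(\theta)-(\theta\cdot\nn)\,\partial_\nn v,
\]
in which the dependence on the chosen extensions of $v$ and $\nn$ has cancelled, so the right-hand side is intrinsic to $\Gamma$.

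Third, the heart of the proof is the tangential divergence theorem on the boundaryless curve $\Gamma$, namely $\int_\Gamma \mathrm{div}_\tau(w)=\int_\Gamma H\,(w\cdot\nn)$ for every $\mathcal{C}^1$ vector field $w$ defined near $\Gamma$. Applying it to $w=v\theta$ and inserting the product rule above gives
\[
\int_\Gamma\left(\theta\cdot\nabla v+v\,\mathrm{div}_\tau(\theta)\right)-\int_\Gamma(\theta\cdot\nn)\,\partial_\nn v=\int_\Gamma H\,v\,(\theta\cdot\nn),
\]
which rearranges at once into the stated equality. The hard part is establishing this tangential divergence theorem itself: I would prove it through a partition of unity subordinate to local $\mathcal{C}^2$ graph parametrizations of $\Gamma$, reducing in each chart to the classical divergence theorem in the plane. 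The curvature term, equal to $H=\mathrm{div}_\tau(\nn)$ with the paper's sign convention, appears precisely as the trace of the Weingarten map when the Euclidean divergence on each graph is rewritten against the arclength measure, and the interior boundary contributions between charts cancel because $\Gamma$ is closed. Once this geometric identity is secured, the algebraic steps are routine and the density passage of the first paragraph extends the formula to all $v\in\mathrm{W}^{2,1}(\Omega,\R)$, completing the proof.
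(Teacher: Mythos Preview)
The paper does not give its own proof of this proposition; it simply states that the proof can be found in~\cite{HENROT}. Your argument is correct and is essentially the standard route taken there: apply the tangential divergence theorem $\int_\Gamma \mathrm{div}_\tau(w)=\int_\Gamma H\,(w\cdot\nn)$ on the closed $\mathcal{C}^2$ boundary to $w=v\theta$, expand via the product rule $\mathrm{div}_\tau(v\theta)=\theta\cdot\nabla v+v\,\mathrm{div}_\tau(\theta)-(\theta\cdot\nn)\,\partial_\nn v$, and pass from smooth $v$ to $v\in\mathrm{W}^{2,1}(\Omega,\R)$ by density together with the $\mathrm{W}^{1,1}\to\LL^1(\Gamma)$ continuity of the trace.
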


\begin{myProp}\label{beltrami}
Assume that $\Gamma$ is of class $\mathcal{C}^2$ and let $w\in\mathrm{H}^{2}(\Omega,\R^{2\times 2})$. It holds that 
\begin{equation*}
    \mathrm{div}(w)=\mathrm{div}_{\tau}\left(w_{\tau}\right)+H w\nn+\left(\partial_{\nn}w\right)\nn \qquad \text{\textit{a.e.}\ on } \Gamma,
\end{equation*}
where $\mathrm{div}_{\tau}\left(w_{\tau}\right)\in\LL^2(\Gamma,\R^2)$ is the vector whose the $i$-th component is defined by $\mathrm{div}_{\tau}\left(w_{\tau}\right)_i:=\mathrm{div}_{\tau}((w_{i})_{\tau})\in\LL^2(\Gamma,\R)$, where $(w_{i})_{\tau}:=w_i-(w_i\cdot\nn) \nn\in\LL^2(\Gamma,\R^2)$, and where~$\partial_{\nn}w\in\LL^2(\Gamma,\R^{2\times 2})$ is the matrix whose the~$i$-th line is the transpose of the vector $\partial_{\nn}w_{i}:=(\nabla{w_{i}})\nn\in\LL^2(\Gamma,\R^2)$, for all $i\in \{ 1,2 \}$. Moreover, it holds that
\begin{equation*}
\int_{\Gamma}v\cdot\mathrm{div}_{\tau}\left(w_{\tau}\right)=-\int_{\Gamma}w:\nabla_{\tau}v, \qquad \forall v \in \HH^{2}(\Omega,\R^2),
\end{equation*}
where $\nabla_{\tau}v$ is the matrix whose the $i$-th line is the transpose of the \textit{tangential gradient} $\nabla_{\tau}v_i:=\nabla{v_i}-(\partial_{\nn}v_i)\nn \in \HH^{1/2}(\Gamma,\R^{2})$, for all $i\in \{ 1,2 \}$.
\end{myProp}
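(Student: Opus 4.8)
The plan is to establish both assertions by reducing the matrix-valued statements to row-by-row vector identities and then invoking standard tangential calculus on the closed $\mathcal{C}^2$ curve $\Gamma$. Throughout, the $\mathcal{C}^2$-regularity of $\Gamma$ guarantees that $\nn$ extends to a $\mathcal{C}^1$ field near $\Gamma$, that the mean curvature $H$ is well defined and continuous, and that for $w\in\HH^2(\Omega,\R^{2\times2})$ all the boundary traces appearing in the statement (in particular $\partial_{\nn}w$, the tangential derivatives, and $w_\tau$) make sense as $\LL^2(\Gamma)$ objects.

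For the pointwise decomposition, I would first treat a single vector field $\varphi$, to be taken successively as each transposed row $w_i$ of $w$. The starting point is the orthogonal splitting of the full Euclidean Jacobian, $\nabla\varphi=\nabla_{\tau}\varphi+(\partial_{\nn}\varphi)\otimes\nn$, whose matrix trace yields $\mathrm{div}(\varphi)=\mathrm{div}_{\tau}(\varphi)+\partial_{\nn}\varphi\cdot\nn$. Next I would separate the tangential divergence of $\varphi$ from that of its tangential part: writing $\varphi=\varphi_{\tau}+(\varphi\cdot\nn)\nn$ and applying the product rule for the tangential divergence gives $\mathrm{div}_{\tau}(\varphi)=\mathrm{div}_{\tau}(\varphi_{\tau})+\nabla_{\tau}(\varphi\cdot\nn)\cdot\nn+(\varphi\cdot\nn)\,\mathrm{div}_{\tau}(\nn)$. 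The middle term vanishes because $\nabla_{\tau}(\varphi\cdot\nn)$ is tangential, hence orthogonal to $\nn$, while the key geometric identity $\mathrm{div}_{\tau}(\nn)=H$ turns the last term into $H(\varphi\cdot\nn)$. Combining the two steps gives $\mathrm{div}(\varphi)=\mathrm{div}_{\tau}(\varphi_{\tau})+H(\varphi\cdot\nn)+\partial_{\nn}\varphi\cdot\nn$; applying this with $\varphi=w_i$ and reading off the $i$-th component, using $(w\nn)_i=w_i\cdot\nn$ and $((\partial_{\nn}w)\nn)_i=\partial_{\nn}w_i\cdot\nn$, reproduces the claimed matrix identity a.e.\ on $\Gamma$.

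For the integration-by-parts formula, I would rely on the tangential divergence theorem on the closed curve $\Gamma$, namely $\int_{\Gamma}\mathrm{div}_{\tau}(X)=\int_{\Gamma}H(X\cdot\nn)$ for a sufficiently regular vector field $X$. Taking $X=v_i(w_i)_{\tau}$, with $v_i$ the $i$-th component of $v$, the right-hand side vanishes because $(w_i)_{\tau}\cdot\nn=0$; the product rule $\mathrm{div}_{\tau}(v_i(w_i)_{\tau})=v_i\,\mathrm{div}_{\tau}((w_i)_{\tau})+\nabla_{\tau}v_i\cdot(w_i)_{\tau}$ then yields $\int_{\Gamma}v_i\,\mathrm{div}_{\tau}((w_i)_{\tau})=-\int_{\Gamma}\nabla_{\tau}v_i\cdot(w_i)_{\tau}$. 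Since $\nabla_{\tau}v_i$ is tangential, one has $\nabla_{\tau}v_i\cdot(w_i)_{\tau}=\nabla_{\tau}v_i\cdot w_i$, so summing over the index $i$ turns the right-hand side into $-\int_{\Gamma}w:\nabla_{\tau}v$, which is exactly the asserted identity.

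The differential-geometric ingredients I am using, namely the Jacobian splitting, the product rule for $\mathrm{div}_{\tau}$, the identity $\mathrm{div}_{\tau}(\nn)=H$, and the tangential divergence theorem on a closed $\mathcal{C}^2$ curve, are all classical and are recorded, for instance, in \cite{HENROT}. All of them are transparent when $\varphi$, $v$ and $w$ are smooth, so the genuine work is the passage to finite regularity: I would first prove everything for smooth $w$ and $v$ and then extend to $w\in\HH^2(\Omega,\R^{2\times2})$ and $v\in\HH^2(\Omega,\R^2)$ by density, checking that each trace involved ($\partial_{\nn}w$, $\nabla_{\tau}v$, $w_{\tau}$) converges in $\LL^2(\Gamma)$ so that both sides of each identity pass to the limit. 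I expect this density and trace bookkeeping, rather than any single algebraic manipulation, to be the main point requiring care.
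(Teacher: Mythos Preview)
Your argument is correct and is precisely the standard route to these identities. Note, however, that the paper does not supply its own proof of this proposition: it simply records the statement in Appendix~\ref{appendix2} and refers the reader to~\cite{HENROT}. Your reduction to a single row $\varphi=w_i$, the splitting $\mathrm{div}(\varphi)=\mathrm{div}_{\tau}(\varphi)+\partial_{\nn}\varphi\cdot\nn$ followed by $\mathrm{div}_{\tau}(\varphi)=\mathrm{div}_{\tau}(\varphi_{\tau})+H(\varphi\cdot\nn)$ via $\mathrm{div}_{\tau}(\nn)=H$, and then the tangential divergence theorem $\int_{\Gamma}\mathrm{div}_{\tau}(X)=\int_{\Gamma}H(X\cdot\nn)$ applied to $X=v_i(w_i)_{\tau}$, is exactly the computation one finds in that reference, so there is nothing to compare against and nothing to correct.
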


\bibliographystyle{abbrv}
\bibliography{biblio}

\end{document}